\newcommand{\widebar}[1]{\mkern 1.5mu\overline{\mkern-1.5mu#1\mkern-1.5mu}\mkern 1.5mu}
\DeclareMathOperator*{\argmax}{arg\,max}
\newtheorem{thm}{Theorem}
\theoremstyle{definition}
\newtheorem{assumption}{Assumption}
\def\Rmax{R_{\text{\textnormal{max}}}}
\def\Lmax{L_{\text{\textnormal{max}}}}
\def\Vmax{V_{\text{\textnormal{max}}}}
\def\bmax{b_{\text{\textnormal{max}}}}
\def\bmin{b_{\text{\textnormal{min}}}}
\def\mless{\preccurlyeq}
\newcommand{\indicate}[1]{\mathbf{1}_{\{#1\}}}
\begin{document}
\title[Optimal Hour--Ahead Bidding in the Real--Time Electricity Market]{Optimal hour--ahead bidding in the real--time electricity market with battery storage using Approximate Dynamic Programming}


\begin{abstract}
There is growing interest in the use of grid--level storage to smooth variations in supply that are likely to arise with increased use of wind and solar energy.  Energy arbitrage, the process of buying, storing, and selling electricity to exploit variations in electricity spot prices, is becoming an important way of paying for expensive investments into grid--level storage.  Independent system operators such as the NYISO (New York Independent System Operator) require that battery storage operators place bids into an hour--ahead market (although settlements may occur in increments as small as 5 minutes, which is considered near ``real--time'').  The operator has to place these bids without knowing the energy level in the battery at the beginning of the hour, while simultaneously accounting for the value of leftover energy at the end of the hour.  The problem is formulated as a dynamic program. We describe and employ a convergent approximate dynamic programming (ADP) algorithm that exploits monotonicity of the value function to find a revenue--generating bidding policy; using optimal benchmarks, we empirically show the computational benefits of the algorithm. Furthermore, we propose a distribution--free variant of the ADP algorithm that does not require any knowledge of the distribution of the price process (and makes no assumptions regarding a specific real--time price model). We demonstrate that a policy trained on historical real--time price data from the NYISO using this distribution--free approach is indeed effective.
\end{abstract}

\author[Jiang and Powell]{Daniel R. Jiang$^*$ and Warren B. Powell$^*$}
\address{$^*$Department of Operations Research and Financial Engineering, Princeton University}
\maketitle

\section{Introduction}
Bidding into the electricity market can be a complicated process, mainly due to the requirement of balancing supply and demand at each point in the grid. To solve this issue, the Independent System Operators (ISOs) and the Regional Transmission Organizations (RTOs) generally use multi--settlement markets: several tiers of markets covering planning horizons that range from \emph{day--ahead} to \emph{real--time}. The idea is that the markets further away from the operating time settle the majority of the generation needed to handle the predicted load, while the markets closer to the operating time correct for the small, yet unpredictable deviations that may be caused by issues like weather, transmission problems, and generation outages (see, for example, \cite{Shahidehpour2002}, \cite{Eydeland2003}, \cite{Harris2006}, for more details). Settlements in these real--time markets are based on a set of intra--hour prices, typically computed at 5, 10, or 15 minute intervals, depending on the specific market in question. A settlement refers to the financial transaction after a generator \emph{clears the market}, which refers to being selected to either buy or sell energy from the market. If a generator does not clear the market, it remains idle and no settlement occurs. We refer to this situation as being \emph{out of the market}. 


Many ISO's and RTO's, such as the Pennsylvania--New Jersey--Maryland Interconnection (PJM), deal with the balancing market primarily through the day--ahead market. PJM's balancing market clears every 5 minutes (considered to be near ``real--time''), but the bids are all placed the previous day. See \cite{Eydeland2003} and the PJM Energy and Ancillary Services Market Operations Manual for more information.  In certain markets, however, it is not only possible to settle in real--time, but market participants can also submit bids each hour, for an hour in the future.  Thus, a bid (consisting of buy and sell prices) can be made at 1pm that will govern the battery between 2pm and 3pm. The process of both bidding and settling in real--time is a characteristic of the New York Independent System Operator (NYISO) real--time market and is the motivating example for this paper. Other prominent examples of markets that include a real--time bidding aspect include California ISO (CAISO) and Midcontinent ISO (MISO). In particular, our goal is to pair battery storage with hour--ahead bidding in the real--time market for revenue maximization, a strategy sometimes referred to as \emph{energy arbitrage}.

It is unlikely that profits from battery/energy arbitrage alone can be sustainable for a company; however, if performed optimally, it can be an important part of a range of profit generating activities (one such example is the frequency regulation market). See \cite{Walawalkar2007} for an economic analysis of using a storage device for both energy arbitrage (using a simple ``charge--off--peak and discharge--on--peak'' policy) and frequency regulation in the New York area. The analysis shows that in New York City (but not the surrounding areas), there is a ``high probability of positive NPV [net present value] for both energy arbitrage and regulation,'' but even so, there is still significant risk in not being able to recover the initial capital cost. However, the potential for more efficient and cost--effective technology combined with better control policies can make energy arbitrage feasible in the near future. Other studies on the topic of the value of storage include \cite{Sioshansi2009}, \cite{Sioshansi2011a}, and \cite{Byrne2012}.

In our problem, we assume that the goal is to optimally control a 1 MW battery; in practice, a company may operate a fleet of such batteries. Market rules state that we must bid in integer increments, meaning the possible actions at each settlement are to charge, discharge (both at a rate of 1 MW), or do nothing. Hence, our precise problem is to optimize the placement of two hour--ahead bids, a ``positive'' bid (for a quantity of $+1$ MW) and a ``negative'' bid (for a quantity of $-1$ MW) that correspond to selling (generation) and buying (negative generation), respectively, over a period of time such that purchased energy can be stored in the finite capacity battery. The goal is to maximize expected revenue. Further, given that our model is tailored to battery storage (inherently small capacity), it is reasonable to assume no price impact (i.e., our bids do not affect the spot prices of electricity). In the real--time market, bidding for the operating hour closes an hour in advance and the hour--ahead bid is fixed for the entire operating hour.


This paper makes the following contributions. We describe, in detail, a mathematical model of the bidding process in the real--time electricity market and formulate the sequential decision problem as a Markov Decision Process (MDP). Along the way, we show the structural properties of the problem (monotonicity of the contribution and value functions) that we utilize in our solution technique. Next, we describe and benchmark a convergent approximate dynamic programming algorithm called \emph{Monotone--ADP} (M--ADP) (\cite{Jiang2013}) that can be used to obtain an approximate, but near--optimal bidding policy. We also present a new version of Monotone--ADP utilizing post--decision states that allows us to train bidding policies without any model or knowledge of the distribution of real--time prices (which we call a distribution--free method), allowing our solution technique to be easily adopted in practice. Finally, we present a case study detailing the results of an ADP policy trained using only historical real--time price data from the NYISO. In the case study, we also compare the ADP policy to other rule--based policies, two of which are from the energy arbitrage literature and one from our industry contacts. All proofs can be found in the Appendix.
%

\section{Literature Review}
With renewable energy sources like wind and solar becoming more established, the problem of energy storage is also becoming increasingly important. In this section, we first review studies dedicated solely to storage and then move on to those that consider the bidding aspect. Lastly, we discuss algorithmic techniques similar to our proposed method (Monotone--ADP).

Coupling wind energy with storage has been well--studied in a variety of ways. The paper by \cite{Kim2011} poses a wind energy commitment problem given storage and then analytically determines the optimal policy for the infinite horizon case. \cite{Sioshansi2011} uses ideas from economics and game theory (i.e., the Stackelberg Model) to make several conclusions, including the finding that the value of storage increases with market--competitiveness. In addition, \cite{Greenblatt2007} finds that for high green house gas (GHG) emissions prices, compressed air energy storage is a better choice as a supplemental generator to wind energy when compared to natural gas turbines. The well--known smoothing effects of energy storage on intermittent renewable sources is studied in the context of wind power output by \cite{Paatero2005}.

Another problem within this realm is the storage of natural gas, which involves optimally controlling injection and withdrawal of gas from a storage facility that is typically underground. \cite{Carmona2010} uses a technique known as \emph{optimal switching} to solve a natural gas storage problem; computationally, the value function is approximated using basis functions. In a similar vein, \cite{Thompson2009} formulates a stochastic control problem and numerically solve the resulting integro--differential equation to arrive at the optimal policy. \cite{Lai2010} proposes using an ADP algorithm along with an approximation technique to reduce the number of state space dimensions for natural gas storage valuation. 

Other energy storage problems include reservoir management (see \cite{Nandalal2007}) and pairing solar with battery storage (see \cite{Barnhart2013}). It quickly becomes clear that all of these problems are similar; in fact, \cite{Secomandi2010} gives the structure of the optimal policy for trading generic commodities given storage. At its core, energy storage has similarities to an array of classical problems related to operations research, such as resource allocation and inventory control.

There are also many studies that consider the bidding aspect of the electricity markets. One significant difference between many of these studies and our paper is that, rather than placing many bids at once, we consider a sequential, hourly bidding problem.  \cite{Lohndorf2010} considers a day--ahead bidding problem different from ours using an infinite horizon MDP; \cite{Conejo2002} solves a price--taker bidding problem using a deterministic look--ahead policy; \cite{Gross2000} formulate a constrained optimization problem for optimal bidding in a competitive power pool; and \cite{David1993} develops both deterministic and stochastic models for bidding under the consideration of other market players. Lastly, \cite{Lohndorf2014} uses approximate dual dynamic programming (ADDP) to solve a day--ahead bidding problem involving hydro storage. Besides the major algorithmic differences from our paper, \cite{Lohndorf2014} also works in a day--ahead setting with individual bids for each hourly subinterval, while we work in an hourly setting with bids that must be simultaneously active for every 5 minute subinterval. Furthermore, in order to have complete information to make the optimal decision and to implement the transition dynamics, the previous bid (placed in the last time interval) is a part of our state variable, which is not the case for \cite{Lohndorf2014}. For more details, the literature survey by \cite{Wen2000} provides an excellent overview to strategic bidding.




In the case of real--world problems with large state spaces, backward dynamic programming is typically not a viable solution strategy, so we often use approximate dynamic programming (ADP) techniques. In this paper, we consider a variant of the approximate value iteration (AVI) algorithm (see both \cite{Bertsekas1996} and \cite{Powell2011}) that exploits the monotonicity in certain dimensions of the optimal value function (also known as the \emph{cost--to--go} function) in order to quickly approximate the shape of the value function. The algorithm, called \emph{Monotone--ADP}, is analyzed in \cite{Jiang2013} and was used previously as a heuristic in \cite{Papadaki2003}.

Like monotonicity, convexity/concavity also often arise in applications, and similar algorithms to Monotone--ADP that exploit these structural properties have been studied in \cite{Godfrey2001}, \cite{Topaloglu2003}, \cite{Powell2004}, and \cite{Nascimento2009a}. In general, the above studies on monotonicity and convexity have shown that it is advantageous to use the structural properties of value functions in ADP algorithms.

\section{Mathematical Formulation}
\label{sec:mathformulation}
We can formulate the problem mathematically as follows. Let $M$ be the number of settlements made per hour and let $\Delta t = 1/M$ be the time increment between settlements (in hours). For example, in the New York Independent System Operator (NYISO), settlements occur every 5 minutes, so we choose $M=12$. Although settlements are made intra--hour, bidding decisions are always made on the hour, for an hour in the future. Thus, the operator places bids at 1pm to operate the battery between 2 and 3pm, with settlements made in 5 minute intervals within the hour. For time indexing, we use $t$ (measured in hours); bidding decisions are made when $t \in \mathbb N$ and settlements occur when $t \in \mathcal T = \{k\cdot \Delta t: k \in \mathbb N\}$.

Let the price $P_t$ for $t \in \mathcal T$ be a discrete--time, nonnegative, stochastic process. Due to the fact that bidding decisions and settlements occur on two different schedules (every hour versus every $\Delta t$), we use the following notation. For $t \in \mathbb N$, let $P_{(t,t+1]}$ be an $M$--dimensional vector that represents the spot prices that occurred within the hour from $t$ to $t+1$:
\begin{equation}
P_{(t,t+1]} = (P_{t+\Delta t}, P_{t+2\cdot\Delta t}, \ldots, P_{t+(M-1)\cdot \Delta t},P_{t+1}).
\end{equation}
Hence, $P_{(t,t+1]}$ does not become fully known until time $t+1$. Next, let our set of bidding decisions be a finite set $\mathcal B$ such that
\begin{equation}
\mathcal{B} \subseteq \{(b^-,b^+):0 \le b^- \le b^+ \le b_\text{max}\},
\end{equation}
with $b_\text{max} \in \mathbb R_+$. Let $b_t = (b_t^-, b_t^+) \in \mathcal{B}$ be the bidding decision made at $t$ used for the interval $(t+1,t+2]$. All sell bids, $b_t^+$, (or ``positive'' bids because we are transferring at a rate of +1 MW) less than the spot price are picked up for dispatch (releasing energy into the grid). All buy bids, $b_t^-$, (or ``negative'' bids because we are transferring at a rate of $-1$ MW) greater than the spot price are picked up for charge. If the spot price falls in between the two bids, we are out of the market and the battery stays in an idle state. When we are obligated to sell to the market but are unable to deliver, we are penalized $K \cdot P_t$, where $K \ge 0$. The restriction of $b_t^- \le b_t^+$ guarantees that we are never obligated to buy and sell simultaneously.

We remark that in the actual bidding process, the buy bid is a negative number and the criteria for clearing the market is that the bid is less than the negative of the spot price. Due to our bids being for only two quantities ($\pm1$ MW), the above reformulation of the bidding process is cleaner and more intuitive.

Let $R_t \in \mathcal{R} = \{0,1,2,\ldots,\Rmax\}$ be the energy stored in the battery. For simplicity, assume that $\Rmax$ is adjusted so that a unit of resource represents $1/M$ MWh of energy. Thus, it is clear that at each settlement within the hour, the change in resource is either $+1$, $-1$, or $0$. We also define a deterministic function that maps a vector of intra--hour prices $P \in \mathbb{R}_+^M$ and a bid $b=(b^-,b^+) \in \mathcal{B}$ to a vector of outcomes (charge $=-1$, discharge $=+1$, or idle $=0$). Define $q: \mathbb{R}^M \times \mathcal B \rightarrow \{-1,0,1\}^M$ such that the $m$--th component of $q(P,b)$ is 
\begin{equation}
q_m(P,b)=\mathbf{1}_{\{b^+ < \,e_m^\intercal P\}}-\mathbf{1}_{\{b^- > \, e_m^\intercal P\}},
\label{chargefunc}
\end{equation}
where $e_m$ is a vector of zeros with a one at the $m$--th row (and thus, picks out the $m$--th component of the price vector $P$). Note that $q$ is not dependent on time, but in the context of our hour--ahead bidding problem, we use it in the form of $q(P_{(t-1,t]},b_{t-2})$, which is deterministic at time $t$. Figure \ref{operation} illustrates the intra--hour behavior.

\begin{figure}[h]
	\begin{center}
	\includegraphics[scale=.7]{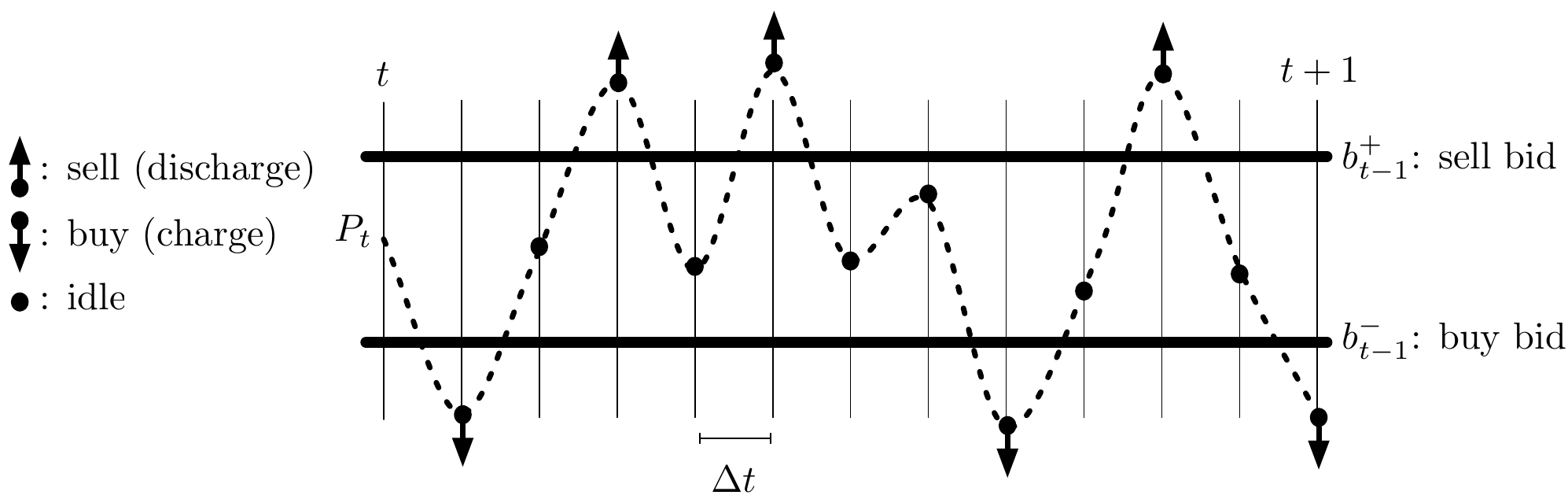}\\
	\end{center}
	\caption{Illustration of the Intra--hour Bidding Behavior}
	\label{operation}
\end{figure}

To define the hourly transition function between $R_t$ and $R_{t+1}$, we model each of the individual settlements within the hour and then combine them recursively (since from $t$ to $t+1$, we settle $M$ times). Let $q_s \in  \{-1,0,1\}^M$ be a vector of settlement outcomes and suppose $g^R_m(R_t,q_s)$ represents the amount of resource after the $m$--th settlement. Thus, we have
\begin{equation}
\begin{aligned}
g^R_0(R_t,q_s) &= R_t,\\
g^R_{m+1}(R_t,q_s) &= \bigl[\min \{g^R_m(R_t,q_s)-e_m^\intercal q_s, \Rmax\}\bigr]^+,
\end{aligned}
\label{gmtrans}
\end{equation}
for $1 \le m \le M$. The intra--hour resource levels are
\begin{equation*}
R_{t+m\Delta t} = g^R_m(R_t,q_s).
\end{equation*}
Finally, let $g^R$ be the hourly transition function, which is defined as a composition of the functions $g^R_M$ and $q$ in the following way:
\begin{equation}
R_{t+1} = g^R(R_t,P_{(t,t+1]},b_{t-1}) = g^R_{M}\bigl(R_t,q(P_{(t,t+1]},b_{t-1})\bigr).
\label{res_trans}
\end{equation}
The need for an hourly transition function from $R_t$ directly to $R_{t+1}$ (rather than simply defining the sub--transitions between the intra--hour settlements) is due to the hourly decision epoch of the problem.

\begin{restatable}{prop}{gmono}
For an initial resource level $r \in \mathcal R$, a vector of intra--hour prices $P \in \mathbb R^M$, a bid $b = (b^-,b^+) \in \mathcal B$, and a subinterval $m$, the resource transition function
$g^R_{m}\bigl(r,q(P,b)\bigr)$ is nondecreasing in $r$, $b^-$, and $b^+$.
\label{gmono}
\end{restatable}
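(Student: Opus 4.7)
The plan is to decompose monotonicity in $(r, b^-, b^+)$ into two cleaner pieces: (a) monotonicity of the raw intra-hour recursion $g^R_m(r, q_s)$ in the initial resource $r$ and (componentwise) in the settlement vector $q_s \in \{-1,0,1\}^M$, and (b) monotonicity of the clearing function $q(P,b)$ in $b^-$ and $b^+$. Composing these will then give the claim, since increases in the bids only affect $g^R_m$ through $q$.

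First I would unpack $q(P,b)$ from \eqref{chargefunc}. Fix $P$ and $m$. As $b^+$ increases, the indicator $\mathbf{1}_{\{b^+ < e_m^\intercal P\}}$ is nonincreasing, while the term involving $b^-$ is unchanged, so $q_m(P,b)$ is nonincreasing in $b^+$. Symmetrically, as $b^-$ increases, $-\mathbf{1}_{\{b^- > e_m^\intercal P\}}$ is nonincreasing, so $q_m(P,b)$ is nonincreasing in $b^-$. Hence, for fixed $P$, the vector $q(P,b)$ is componentwise nonincreasing in each of $b^-$ and $b^+$.

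Next I would prove the two monotonicity properties of $g^R_m(r, q_s)$ by induction on $m$, using the fact that both $x \mapsto [x]^+$ and $x \mapsto \min\{x, \Rmax\}$, and hence their composition, are nondecreasing. For monotonicity in $r$: the base case $g^R_0(r,q_s)=r$ is immediate, and if $g^R_m(\cdot, q_s)$ is nondecreasing in $r$, then so is $r \mapsto g^R_m(r, q_s) - e_{m+1}^\intercal q_s$, and applying the nondecreasing map $x \mapsto [\min\{x, \Rmax\}]^+$ preserves this. For (componentwise) monotonicity in $q_s$: suppose $q_s \le q_s'$ componentwise. Then $g^R_0(r, q_s) = g^R_0(r, q_s') = r$, and assuming inductively $g^R_m(r, q_s') \le g^R_m(r, q_s)$, we combine with $-e_{m+1}^\intercal q_s' \le -e_{m+1}^\intercal q_s$ to obtain
\begin{equation*}
g^R_m(r, q_s') - e_{m+1}^\intercal q_s' \;\le\; g^R_m(r, q_s) - e_{m+1}^\intercal q_s,
\end{equation*}
and again apply $[\min\{\cdot, \Rmax\}]^+$ to conclude $g^R_{m+1}(r, q_s') \le g^R_{m+1}(r, q_s)$.

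Finally I would compose: for any fixed $P$, increasing $r$ with $b$ fixed leaves $q(P,b)$ unchanged and increases $g^R_m(r, q(P,b))$ by step (a, part 1). Increasing $b^-$ or $b^+$ with $r$ fixed makes $q(P,b)$ componentwise smaller by step (b), which by step (a, part 2) makes $g^R_m(r, q(P,b))$ larger. This yields the three required monotonicities. Nothing here is genuinely hard; the only minor subtlety — and the one place to be careful — is to verify the inductive step for monotonicity in $q_s$ despite the clipping at $0$ and $\Rmax$, which is handled cleanly by observing that those clipping operations are themselves monotone and therefore preserve inequalities along the recursion.
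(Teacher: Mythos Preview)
Your argument is correct and follows essentially the same route as the paper: an induction on $m$ showing that $g^R_m(r,q_s)$ is nondecreasing in $r$ and nonincreasing (componentwise) in $q_s$, combined with the observation that $q(P,b)$ is componentwise nonincreasing in $b^-$ and $b^+$. The only cosmetic difference is that the paper packages the two inductions into a single inequality $g^R_m(r_1,q_2)\le g^R_m(r_2,q_1)$ for $r_1\le r_2$ and $q_1\le q_2$, whereas you treat the $r$- and $q_s$-directions separately.
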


We now consider another dimension to our problem by allowing a limit to be imposed on the number of charge--discharge cycles used by the battery, for the sake of increasing the lifetime of the battery. Battery cycle--life (the approximate number of cycles before capacity diminishes to around 80\%), a key issue when considering economic feasibility, varies between the different types of battery technologies and the operating conditions, but are typically in the range of 1000 (e.g., lead--acid) to 5000 (e.g., vanadium redox); for an extensive review, see \cite{Yang2011}. In our correspondence with industry colleagues, we found that a common (though possibly somewhat conservative) estimate of battery usage is 300 cycles/year, meaning that most devices can last at least 3 to 4 years. However, the model developed in this paper is for hourly decision making and it would be impractical to solve the model for time--horizons of several years. Note that different batteries technologies degrade in different ways, but in general, degradation occurs slowly (nearly linearly with charge--discharge cycles) at first, but after a point, efficiency drops much more rapidly.

Over a short horizon (on the order of days), the effects of battery degradation is negligible, but we propose the following way for one to impose a sort of artificial limit to the number of trades (charge--discharge cycles) performed. Let $L_t \in \mathcal L = \{0,1,2,\ldots,\Lmax\}$ be decremented on every discharge of the battery (starting with $L_0 = \Lmax$) and suppose that when selling to the market at a settlement time $t'$ in $(t,t+1]$, the revenue is discounted by a factor of $\beta(L_{t'})$ where $\beta:\mathcal L \rightarrow [0,1]$ is a nondecreasing function. Depending on the battery technology, preferences of the operator, and the time--horizon of the model, the choice of $\beta$ may vary greatly; the list below offers a few examples:
\begin{enumerate}
\item Constant: $\beta(l) = c \in [0,1]$ for all $l \in \mathcal L$,
\item Step: $\beta(0) = 0$ and $\beta(l) = 1$ for $l \in \mathcal L \setminus \{0\}$,
\item Linear: $\beta(l) = l/\Lmax$ for all $l \in \mathcal L$,
\item Power: $\beta(l) = (l/\Lmax)^\frac{1}n$ for some $n >1$ and all $l \in \mathcal L$,
\end{enumerate}
where (4) seeks to very roughly mimic the efficiency degradation of a real battery. We assume that the physical characteristics of the battery are summarized through $\beta$ and the dynamics of $L_t$, which we now describe.

Similar to the specification of $q$ in (\ref{chargefunc}), we define a function $d: \mathbb R^M \times \mathcal B \rightarrow \{0,1\}^M$ such that the $m$--th component of $d(P,b)$ is
\begin{equation*}
d_m(P,b)=\mathbf{1}_{\{b^+ < \,e_m^\intercal P\}},
\end{equation*}
which indicates the settlements for which a discharge occurred. Like before, we define the transition function from $L_t$ to $L_{t+1}$ using a sequence of sub--transitions. Let $d_s \in \{0,1\}^M$ be a vector of settlement outcomes (in this case, whether a discharge happened or not) and
\begin{equation}
\begin{aligned}
g^L_0(L_t,d_s) &= L_t,\\
g^L_{m+1}(L_t,d_s) &= \bigl[g^L_m(L_t,d_s)-e_m^\intercal d_s\bigr]^+,
\end{aligned}
\label{glmtrans}
\end{equation}
for $1 \le m \le M$. The intra--hour values are
\begin{equation*}
L_{t+m\Delta t} = g^L_m(L_t,d_s),
\end{equation*}
and the hourly transition function $g^L$ is defined
\begin{equation}
L_{t+1} = g^L(L_t,P_{(t,t+1]},b_{t-1}) = g^L_{M}\bigl(L_t,d(P_{(t,t+1]},b_{t-1})\bigr).
\label{resl_trans}
\end{equation}

\begin{restatable}{prop}{glmono}
For an initial $l \in \mathcal L$, a vector of intra--hour prices $P \in \mathbb R^M$, a bid $b = (b^-,b^+) \in \mathcal B$, and a subinterval $m$, the transition function $g^L_{m}\bigl(l,d(P,b)\bigr)$
is nondecreasing in $l$, $b^-$, and $b^+$.
\label{glmono}
\end{restatable}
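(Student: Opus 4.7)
The structure of $g^L$ closely mirrors that of $g^R$, so the plan is to adapt the proof of Proposition~\ref{gmono} using a straightforward induction on the subinterval index $m$, handled separately for each of the three variables. The only structural differences from $g^R$ are that (i) $d$ takes values in $\{0,1\}$ rather than $\{-1,0,1\}$, (ii) the update lacks the capacity cap $\min\{\cdot,\Rmax\}$, and (iii) $d$ does not depend on $b^-$ at all. These differences make the argument, if anything, simpler than for $g^R$.

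Monotonicity in $b^-$ is immediate: by inspection of the definition of $d(P,b)$, each component depends only on $b^+$ and $P$, so $g^L_m(l,d(P,b))$ is constant (hence trivially nondecreasing) in $b^-$. For monotonicity in $l$, fix $b$ and write $d_s = d(P,b)$. I would argue by induction on $m$ that $l \mapsto g^L_m(l,d_s)$ is nondecreasing. The base case $g^L_0(l,d_s) = l$ is obvious, and the inductive step reduces to the observation that the scalar map $x \mapsto [x - e_m^\intercal d_s]^+$ is nondecreasing, so the composition preserves monotonicity.

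For monotonicity in $b^+$, the first step is to note that each component $d_m(P,b) = \mathbf{1}_{\{b^+ < e_m^\intercal P\}}$ is nonincreasing in $b^+$. Therefore, if $b^+ \le \tilde{b}^+$ and we set $d_s = d(P,(b^-,b^+))$ and $\tilde{d}_s = d(P,(b^-,\tilde{b}^+))$, we have $d_s \ge \tilde{d}_s$ componentwise. I would then prove by induction on $m$ the implication
\begin{equation*}
d_s \ge \tilde{d}_s \; \text{componentwise} \;\;\Longrightarrow\;\; g^L_m(l,d_s) \le g^L_m(l,\tilde{d}_s),
\end{equation*}
which reduces the claim to a comparison lemma. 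The base case is trivial, and the inductive step follows from the elementary fact that $(a,c) \mapsto [a-c]^+$ is nondecreasing in $a$ and nonincreasing in $c$: combining the inductive hypothesis $g^L_m(l,d_s) \le g^L_m(l,\tilde{d}_s)$ with $e_m^\intercal d_s \ge e_m^\intercal \tilde{d}_s$ yields the desired inequality at stage $m+1$.

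There is no real obstacle here; the argument is bookkeeping. The only point worth being careful about is keeping the two inductions separate (one on $l$ with $d_s$ fixed, one on the componentwise order of $d_s$ with $l$ fixed), so that each inductive step invokes the right monotonicity property of $x \mapsto [x-c]^+$. After that, chaining through $m = M$ and invoking the definition $g^L(L_t,P_{(t,t+1]},b_{t-1}) = g^L_M(L_t, d(P_{(t,t+1]},b_{t-1}))$ gives the proposition as stated.
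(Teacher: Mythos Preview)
Your proposal is correct and follows essentially the same approach as the paper: both rely on the elementary observation that the one--step update $(a,c)\mapsto [a-c]^+$ is nondecreasing in $a$ and nonincreasing in $c$, push this through by induction on $m$, and then conclude via the fact that $d(P,b)$ is nonincreasing in $b$. The only cosmetic difference is that the paper packages the two monotonicities into a single joint inequality $g^L_M(l_1,d_2)\le g^L_M(l_2,d_1)$ for $l_1\le l_2$, $d_1\le d_2$, whereas you run two separate inductions and additionally note that $d$ is independent of $b^-$; neither change is substantive.
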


At time $t$, we can determine the revenue from the previous hour $(t-1,t]$, which depends on the initial resource $R_{t-1}$, the remaining lifetime $L_{t-1}$, the intra--hour prices $P_{(t-1,t]}$, and the bid placed in the previous hour, $b_{t-2}$. The revenue made at the $m$--th settlement depends on four terms, the price $P_{t+m\Delta t}$, the settlement outcome $q_m(P,b)$ (which establishes the direction of energy flow), a \emph{discount factor} $\gamma_m$ (due to $L_t$), and the \emph{undersupply penalty} $U_m$. Let $r \in \mathcal R$, $l \in \mathcal L$, $P \in \mathbb R^M$, and $b \in \mathcal B$. Since we discount only when selling to the market, let
\begin{equation}
\gamma_m(l,P,b) = \beta(l) \cdot \mathbf{1}_{\{q_m(P,b) = 1\}} + \mathbf{1}_{\{q_m(P,b) \ne 1\}}.
\label{eq:gamma}
\end{equation}
The undersupply penalty takes values of either 1 (no penalty) or $-K$ (penalty):
\begin{equation}
U_m(r,P,b) = \Bigl(1-(K+1) \cdot \textbf{1}_{\{r = 0\}} \cdot \textbf{1}_{\{q_m(P,b) = 1\}}\Bigr).
\label{eq:D}
\end{equation}
This penalization scheme reflects reality: the NYISO penalizes using a price--proportional penalty of $K=1$ (in addition to lost revenue), the reason being to uphold the market balance. When a market participant reneges on a promise to deliver energy to the market, it must pay the penalty of the quantity times the market price to correct the imbalance; this is equivalent to \emph{purchasing that energy from another generator} at the market price and delivering to the market.

Hence, we can write the following sum (over the settlements) to arrive at the hourly revenue, denoted by the function $C$:
\begin{equation}
\begin{aligned}
{C}&\bigl(R_{t-1},L_{t-1},P_{(t-1,t]},b_{t-2}\bigr) \\
&= \sum_{m=1}^M \gamma_m \bigl(L_{t-1+m\Delta t},P_{(t-1,t]},b_{t-2} \bigr) \cdot P_{t+m\Delta t} \cdot
q_m(P_{(t-1,t]},b_{t-2}) \cdot U_m \bigl(R_{t-1+m\Delta t},P_{(t-1,t]},b_{t-2}\bigr).
\end{aligned}
\label{Cdef}
\end{equation}
Note that $C$ is not time--dependent. The timeline of events and notation we use is summarized in Figure \ref{fig:bidding_notation}. The top half of Figure \ref{fig:bidding_notation} shows the contrast between when bids are placed and when bids are active: $b_t$ and $b_{t+1}$ are placed at times $t$ and $t+1$ (arrows pointing up), while $b_{t-1}$ is active for the interval $(t,t+1]$ and $b_{t}$ is active for the interval $(t+1,t+2]$. It also shows that the revenue function $C(R_t,L_t,P_{(t,t+1]},b_{t-1})$ refers to the interval $(t,t+1]$. The bottom half of Figure \ref{fig:bidding_notation} shows an example of the bidding outcomes, i.e., the output of $q(P_{(t,t+1]},b_{t-1})$. Finally, we emphasize that $M$ settlements (and thus, transitions) occur between consecutive values of $R_t$ and $L_t$ due to the discrepancy between the bidding timeline (hourly) and the settlement timeline (every five minutes).
\begin{figure}[h]
	\begin{center}
	\includegraphics[scale=.65]{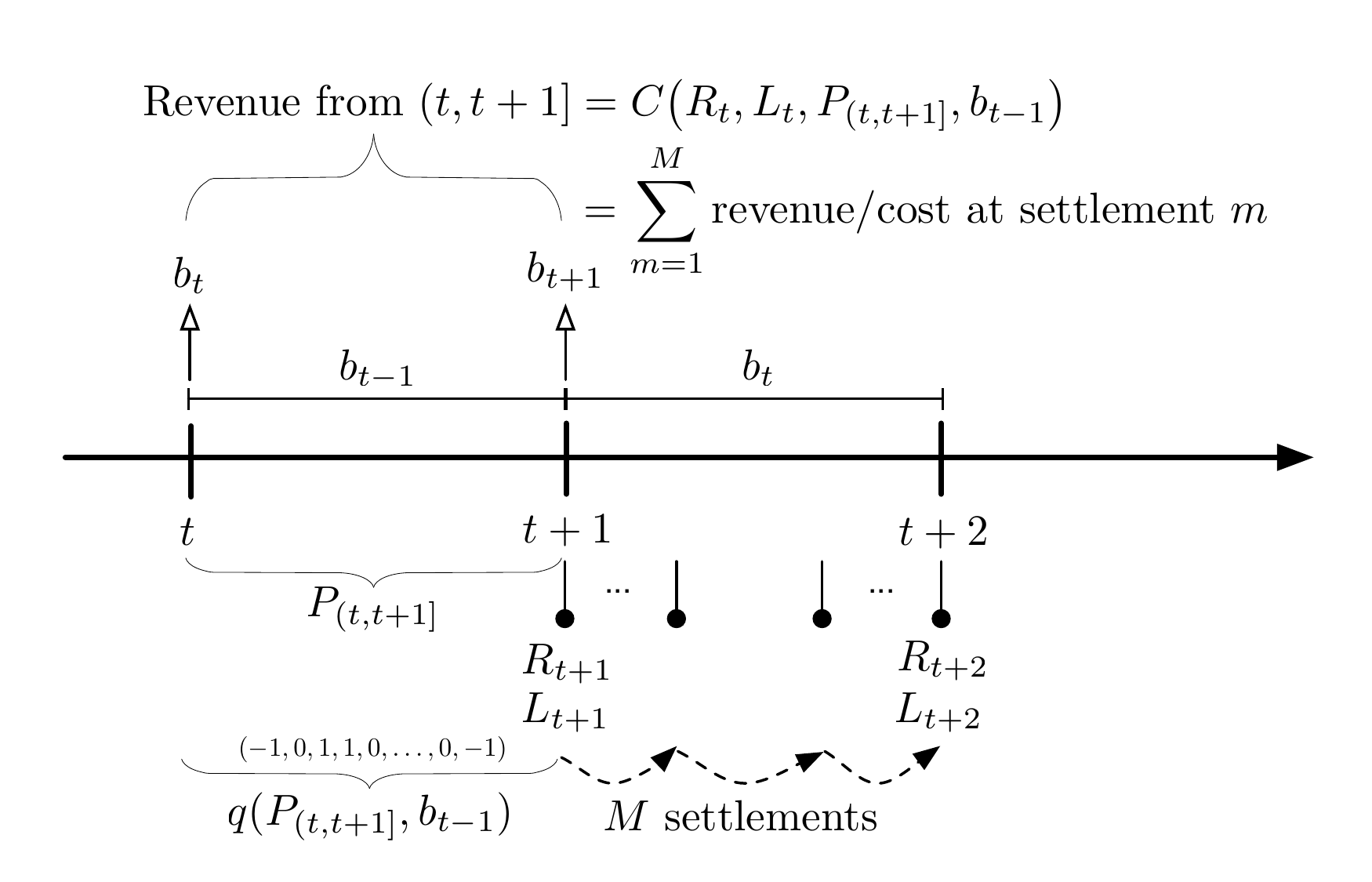}\\
	\end{center}
	\caption{Illustration of the Bidding Process}
	\label{fig:bidding_notation}
\end{figure}

\subsection{Markov Decision Process}The problem of optimizing revenue over a time horizon is a sequential decision problem that we can formulate as a Markov Decision Process (MDP). First, suppose the set of state variables associated with the price process $P_t$ is denoted $P^S_t \in \mathcal P$, where $\mathcal P$ is the space of price model state variables. The MDP can be characterized by the following components:
\begin{itemize}
\item[--] The \emph{state variable} for the overall problem is $S_t = (R_t, L_t, b_{t-1}^-, b_{t-1}^+, P^S_t) \in \mathcal S$ where $\mathcal S$ is the state space. The previous bid $b_{t-1}$ is included because it is the bid that becomes valid at time $t$ for the interval $(t,t+1]$ and is necessary for computing the resource transition function.
\item[--] The \emph{decision} is the hour--ahead bid $b_t = (b_t^-,b_t^+) \in \mathcal B$ that is active for the interval $(t+1,t+2]$.
\item[--] The \emph{exogenous information} in this problem is the price process $P_t$.
\item[--] The \emph{state transition function} or \emph{system model} $S^M$ is given by 
\begin{align}
S_{t+1} &= S^M(S_t,b_t,P_{(t,t+1]})\nonumber \\
&= \Bigl(g^R(R_t,P_{(t,t+1]},b_{t-1}), g^L(L_t,P_{(t,t+1]},b_{t-1}), b_t, P^S_{t+1} \Bigr).
\label{eq:statetrans}
\end{align}
\item[--] The \emph{contribution function} in this model represents the expected value of the revenue in the interval from $t+1$ to $t+2$ using bid $b_t$ given the current state $S_t$. Define:
\begin{equation}
C_{t,t+2}(S_t,b_t) = \mathbf{E}\Bigl[C(R_{t+1},L_{t+1},P_{(t+1,t+2]},b_t)\,|\,S_t\Bigr].
\label{Cbardef}
\end{equation}
The double subscript of $t$ and $t+2$ signifies that the contribution is determined at $t$ (hence, variables indexed by $t' \le t$ are known) but represents the expectation of the revenue in the interval $(t+1,t+2]$. In practice, it is likely that we must redefine $C_{t,t+2}(S_t,b_t)$ as a sample expectation over the available training data (see Section \ref{sec:casestudy}) if 1) a stochastic model of the prices is unavailable or 2) the expectation is impossible to compute. Regardless of its form, we assume that $C_{t,t+2}(S_t,b_t)$ can be computed exactly at time $t$.

\item[--] Let $T-1$ be the last time for which a bid needs to be placed (hence, the trading horizon lasts until $T+1$ and the last value function we need to define is at $T$) and let $B_t^\pi:\mathcal S \rightarrow \mathcal B$ be the decision function for a policy $\pi$ from the class $\Pi$ of all admissible policies. The following is the \emph{objective function} for maximizing expected revenue:
\begin{equation*}
\max_{\pi \in \Pi} \; \textbf{E} \left [ \sum_{t=0}^{T-1} C_{t,t+2}(S_t, B_t^\pi(S_t)) + C_\textnormal{term}(S_T) \,\Bigr|\, S_0 \right ], 
\end{equation*}
where $C_\textnormal{term}(S_T)$ represents a terminal contribution that is nondecreasing in $R_T$, $L_T$, and $b_{T-1}$.
\end{itemize}
We can express the optimal policy in the form of a stochastic dynamic program using Bellman's optimality equation \citep{Bellman1957a}. The optimal value function $V^*$ is defined for each $t$ and each state $S_t$:
\begin{equation}
\begin{aligned}
&V^*_t(S_t) =\max_{b_t \in \mathcal B} \Bigl [C_{t,t+2}(S_t,b_t)+\mathbf{E}\bigl[V^*_{t+1}(S_{t+1})\,|\,S_t\bigl] \Bigr ] \text{ for } t=0,1,2,\ldots,T-1,\\
&V^*_{T}(S_{T}) = C_\textnormal{term}(S_T).
\end{aligned}
\label{bellmangen}
\end{equation}
Figure \ref{dp} illustrates the above notation. Notice that at any decision epoch $t$, both the contribution and value functions are looking one step ahead, i.e., from $t+1$ onwards, in the form of an expectation. Because of this, the \emph{revenue} from $t$ to $t+1$ become, in a sense, irrelevant. However, the link between the time periods comes from the dependence of $R_{t+1}$ and $L_{t+1}$ on $R_t$, $L_t$, and $b_{t-1}$ (and of course, the random prices). In other words, at time $t$, our bid has to be placed for $(t+1,t+2]$ with an uncertain amount of resource, $R_{t+1}$ in the battery. It is important to note that it is precisely because $C_{t,t+2}(S_t,b_t)$ \emph{does not} include the revenue made in $(t,t+1]$ that allows us to show the important structural property of monotonicity for $C_{t,t+2}$ in $b_{t-1}$ (see Proposition \ref{prop:Ctmono} in the next section).

\begin{figure}[h]
	\begin{center}
	\includegraphics[scale=.6]{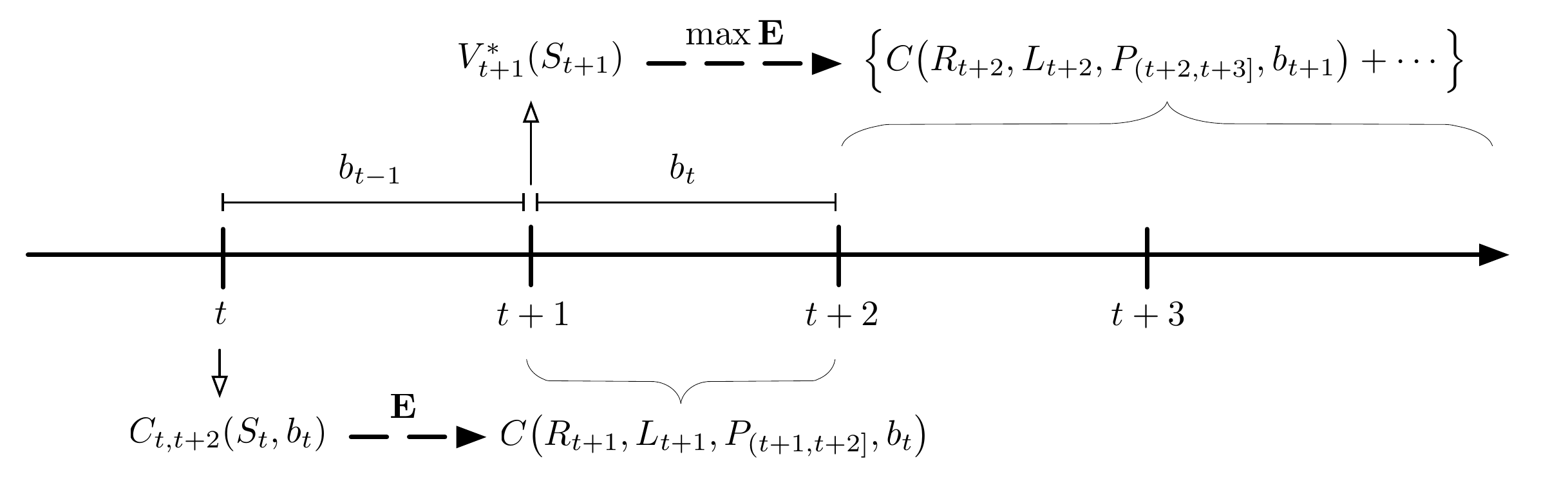}\\
	\end{center}
	\caption{Illustration of the Dynamic Programming Notation}
	\label{dp}
\end{figure}

We now provide some results regarding the structure of the contribution and value functions. The algorithm (Monotone--ADP--Bidding) that we implement to solve for the optimal value function is inspired by the following monotonicity properties.

\begin{restatable}{prop}{Ctmonoprop}
The contribution function ${C}_{t,t+2}(S_t,b_t)$, with $S_t = (R_t, L_t, b_{t-1},P_t^S)$ is nondecreasing in $R_t$, $L_t$, $b_{t-1}^-$, and $b_{t-1}^+$.\label{prop:Ctmono}
\end{restatable}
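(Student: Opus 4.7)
The plan is a two-step reduction: push monotonicity into the conditional expectation in (\ref{Cbardef}) using Propositions~\ref{gmono} and~\ref{glmono}, then verify pathwise monotonicity of the sample revenue $C$ in its first two arguments. Unfolding the definition with (\ref{res_trans}) and (\ref{resl_trans}),
\begin{equation*}
C_{t,t+2}(S_t,b_t) = \mathbf{E}\Bigl[\,C\bigl(g^R(R_t,P_{(t,t+1]},b_{t-1}),\,g^L(L_t,P_{(t,t+1]},b_{t-1}),\,P_{(t+1,t+2]},\,b_t\bigr)\,\Big|\,S_t\Bigr].
\end{equation*}
Since conditional expectation preserves pointwise inequalities, it is enough to show that, for each realization of the prices, the integrand is nondecreasing in each of $R_t$, $L_t$, $b_{t-1}^-$, $b_{t-1}^+$. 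Propositions~\ref{gmono} and~\ref{glmono} applied at $m=M$ make the outputs of $g^R$ and $g^L$ nondecreasing in those four variables, so the problem reduces to showing that $C(r,l,P,b)$ is nondecreasing in $r$ and $l$ for every fixed $(P,b)$.

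I would prove this sample-path claim term by term in (\ref{Cdef}). Fix $(P,b)$, so that $q_m := q_m(P,b)$ and $d_m := d_m(P,b)$ are constants. The $m$-th summand depends on $r$ only through the intra-hour resource $r_m := g^R_m(r,q(P,b))$ feeding $U_m$, and on $l$ only through the intra-hour lifetime $l_m := g^L_m(l,d(P,b))$ feeding $\gamma_m$ via $\beta$. Propositions~\ref{gmono} and~\ref{glmono} render $r_m$ and $l_m$ nondecreasing in $r$ and $l$, respectively. Monotonicity in $r$ is a direct case check on $q_m$: if $q_m \ne 1$ then $U_m \equiv 1$ and the summand is constant in $r$; if $q_m = 1$ then the coefficient $\gamma_m P_{t+m\Delta t} q_m = \beta(l_m) P_{t+m\Delta t}$ is nonnegative, while $U_m(r_m) = 1 - (K+1)\mathbf{1}_{\{r_m = 0\}}$ jumps from $-K$ to $1$ as $r_m$ crosses to a positive integer, and is therefore nondecreasing in $r_m$. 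Their product is nondecreasing in $r$, and summing over $m$ preserves this.

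For monotonicity in $l$ the same case split applies, but this is the step I expect to require the most care. If $q_m \ne 1$ then $\gamma_m \equiv 1$ and the summand is independent of $l$; if $q_m = 1$ with $r_m \ge 1$ then $U_m = 1$ and the summand $\beta(l_m) P_{t+m\Delta t} \ge 0$ is nondecreasing in $l$ by monotonicity of $\beta$ composed with the monotone $g^L_m$. The delicate sub-case, and the main obstacle of the proof, is $q_m = 1$ with $r_m = 0$ (undersupply), where $U_m = -K$ and the summand equals the nonpositive quantity $-K\,\beta(l_m)\,P_{t+m\Delta t}$, which an isolated term-by-term bound would read as nonincreasing in $l$. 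I would resolve this by exploiting the coupling between $g^R_m$ and $g^L_m$: the penalty indicator $\mathbf{1}_{\{r_m = 0\}}$ fires exactly on a subset of the settlements at which $l_m$ is simultaneously being driven down through $g^L_m$, so the increment in $\beta(l_m)$ at those indices is bounded in terms of the increments at the positive-revenue indices, yielding a nonnegative net change in the full sum. Once this bookkeeping is made precise the term-by-term argument closes and, together with the first paragraph, proves Proposition~\ref{prop:Ctmono}.
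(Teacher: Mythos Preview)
Your reduction and the treatment of monotonicity in $r$ match the paper's proof, and you are right to flag the sub-case $q_m=1$, $r_m=0$ as the crux: the paper simply asserts that $\gamma_m$ and $U_m$ are each nondecreasing in their first arguments and concludes monotonicity of $C$ from there, overlooking that the product of a nonnegative nondecreasing factor with a nondecreasing factor that can be \emph{negative} need not be monotone. So on this point you have been more careful than the paper.

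Your proposed coupling fix, however, does not close the gap. Take $r=0$ and a price realization with $e_m^\intercal P>b^+$ for every $m$, so that $q_m=d_m=1$ throughout. Then $r_m\equiv 0$ (the battery stays empty), every summand is an undersupply term, and there are no positive-revenue indices available to dominate anything. The sample revenue collapses to
\[
C(0,l,P,b)=-K\sum_{m=1}^M \beta\bigl(g^L_m(l,d(P,b))\bigr)\,(e_m^\intercal P),
\]
which is \emph{nonincreasing} in $l$ since $g^L_m$ and $\beta$ are nondecreasing, and strictly decreasing whenever $\beta$ is not constant on the relevant range. Hence, under formula~(\ref{Cdef}) as written, $C(r,l,P,b)$ is not monotone in $l$, and no bookkeeping over $g^R_m$ and $g^L_m$ can rescue a term-by-term argument in this scenario. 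The real culprit is that~(\ref{Cdef}) scales the penalty by $\gamma_m=\beta(l_m)$, whereas the surrounding text states the penalty is $K\cdot P_t$, independent of $l$; under that intended reading the delicate sub-case contributes a constant in $l$ and your argument (and the paper's) goes through cleanly. You should either flag this modeling discrepancy explicitly and work with the corrected penalty, or replace the vague ``bounded in terms of the increments at the positive-revenue indices'' sketch with an actual inequality --- as it stands, neither your bookkeeping sketch nor the paper's one-line claim establishes monotonicity in $l$.
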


\begin{restatable}{prop}{Vtmonoprop}
The optimal value function $V^*_t(S_t)$, with $S_t = (R_t, L_t, b_{t-1},P_t^S)$ is nondecreasing in $R_t$, $L_t$, $b_{t-1}^-$, and $b_{t-1}^+$.\label{prop:Vtmono}
\end{restatable}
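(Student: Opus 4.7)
The plan is to proceed by backward induction on $t$ from $T$ down to $0$. The base case is immediate: $V^*_T(S_T) = C_\textnormal{term}(S_T)$ is nondecreasing in $R_T$, $L_T$, $b_{T-1}^-$, $b_{T-1}^+$ by the assumption stated on the terminal contribution.

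For the inductive step, suppose $V^*_{t+1}$ is nondecreasing in $R_{t+1}$, $L_{t+1}$, $b_t^-$, $b_t^+$. Fix two states $S_t = (R_t, L_t, b_{t-1}, P^S_t)$ and $\tilde S_t = (\tilde R_t, \tilde L_t, \tilde b_{t-1}, P^S_t)$ that share a common price state and satisfy $R_t \le \tilde R_t$, $L_t \le \tilde L_t$, $b_{t-1}^- \le \tilde b_{t-1}^-$, $b_{t-1}^+ \le \tilde b_{t-1}^+$. For each fixed decision $b_t \in \mathcal B$, I would compare the bracketed expression inside the Bellman operator under $S_t$ and under $\tilde S_t$. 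Proposition \ref{prop:Ctmono} handles the contribution term directly, giving $C_{t,t+2}(S_t,b_t) \le C_{t,t+2}(\tilde S_t,b_t)$.

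For the continuation value, the key is to couple the two dynamics on a common probability space. Since the conditional distribution of $(P_{(t,t+1]}, P^S_{t+1})$ given $S_t$ depends only on the price-state component $P^S_t$ (which the two states share), I can sample a single realization of $(P_{(t,t+1]}, P^S_{t+1})$ and feed it into both transitions. Under this coupling, Proposition \ref{gmono} yields $R_{t+1} = g^R(R_t, P_{(t,t+1]}, b_{t-1}) \le g^R(\tilde R_t, P_{(t,t+1]}, \tilde b_{t-1}) = \tilde R_{t+1}$ pointwise, and Proposition \ref{glmono} yields $L_{t+1} \le \tilde L_{t+1}$ pointwise; the bid and price-state components of $S_{t+1}$ and $\tilde S_{t+1}$ agree by construction. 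The inductive hypothesis then gives $V^*_{t+1}(S_{t+1}) \le V^*_{t+1}(\tilde S_{t+1})$ almost surely, and taking expectations preserves the inequality. Summing the contribution and continuation terms, the bracketed expression is nondecreasing in the four variables for every fixed $b_t$, and the pointwise maximum of a family of nondecreasing functions over a common argument is again nondecreasing, which gives the conclusion for $V^*_t$.

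The main obstacle, and the only non-mechanical step, is the coupling argument: one must justify that the distribution of $(P_{(t,t+1]}, P^S_{t+1})$ given $S_t$ reduces to its distribution given $P^S_t$ alone, so that realizations can be shared between the two trajectories. This is exactly the Markov structure encoded in the model, where $P^S_t$ is declared to be the state of the price process; once this is accepted, Propositions \ref{gmono}, \ref{glmono}, and \ref{prop:Ctmono} do the rest of the work, and the induction closes cleanly.
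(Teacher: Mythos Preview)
Your proposal is correct and follows essentially the same route as the paper: backward induction from $t=T$, with the inductive step using Propositions~\ref{gmono} and~\ref{glmono} to show that the transition preserves the componentwise ordering for any realization of $P_{(t,t+1]}$, Proposition~\ref{prop:Ctmono} for the contribution term, and then taking the pointwise maximum over $b_t$. The paper packages the continuation-value argument via the post--decision value function $V_t^b(S_t,b_t)=\mathbf{E}[V^*_{t+1}(S_{t+1})\,|\,S_t,b_t]$ rather than speaking of coupling, but your explicit observation that the conditional law of $(P_{(t,t+1]},P^S_{t+1})$ depends only on $P^S_t$ is exactly what the paper uses implicitly when it writes ``for any realization of the intra--hour prices.''
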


\section{Algorithmic Technique}
The traditional way to solve for the optimal value function in (\ref{bellmangen}) is by backward dynamic programming. Due to the fact that this technique requires us to visit every state (which is computationally difficult), we propose the use of approximate dynamic programming. We first note that both methods require a finite state space. Since $\mathcal R$, $\mathcal L$ and $\mathcal B$ were assumed to be finite, we need to assume, in particular, that $\mathcal P$ is also finite or that it is properly discretized.

The idea behind our ADP algorithm, which we call Monotone--ADP--Bidding (see \cite{Jiang2013}) is to iteratively learn the approximations $\widebar{V}^n_t(S_t)$ (after $n$ iterations) of $V_t^*(S_t)$ that obey the structural property of monotonicity. The algorithm is a form of asynchronous (or approximate) value iteration (AVI), so for each time $t$ in iteration $n$, only one state $S_t^n$ is visited. In addition, at each step, we perform a monotonicity preservation step to ensure the approximation is in a sense, structurally similar to $V_t^*$. We show experimentally that failure to maintain monotonicity, despite the availability of convergence proofs, produces an algorithm that simply does not work in practice.

\subsection{Preliminaries}Let $\hat{v}_t^n(S_t^n)$ be an observation of the value of being in state $S_t^n$ at iteration $n$ and time $t$. Define the noise term
\begin{equation*}
w_t^n(S_t^n) = \hat{v}_t^n(S_t^n) - \max_{b_t \in \mathcal B} \Bigl[ C_{t,t+2}(S^n_t,b_t)+\mathbf{E}\bigl[\widebar{V}^{n-1}_{t+1}(S_{t+1})\,|\,S^n_t\bigr] \Bigr],
\end{equation*}
to be the difference between the observation and the optimal value using the iteration $n-1$ \emph{approximation}. We remark, for the sake of clarity, that this is \emph{not} the noise representing the deviation from the true value, $V_t^*(S_t^n)$. Rather, $w_t^n(S_t^n)$ is the noise from an inability to exactly observe the optimal value of the maximization: $\max_{b_t \in \mathcal B} \bigl [C_{t,t+2}(S_t^n,b_t)+\mathbf{E}\bigl[\widebar{V}^{n-1}_{t+1}(S_{t+1})\,|\,S_t^n\bigr] \bigr ]$.

Thus, we can rearrange to arrive at:
\begin{equation*}
\hat{v}_t^n(S_t^n) =\max_{b_t \in \mathcal B} \Bigl [C_{t,t+2}(S_t^n,b_t)+\mathbf{E}\bigl[\widebar{V}^{n-1}_{t+1}(S_{t+1})\,|\,S_t^n\bigr] \Bigr ]+w_t^n(S_t^n).
\end{equation*}
Before we continue, let us define a partial order $\preccurlyeq$ on the state space $\mathcal S$ so that for $s = (r, l, b, p)$ and $s' = (r',l',b',p')$ where $r,r' \in \mathcal R$, $l,l' \in \mathcal L$, $b,b' \in \mathcal B$, and $p,p' \in \mathcal P$, we have that $s \preccurlyeq s'$ if and only if the following are satisfied:
\begin{equation*}
(r,l,b) \le (r', l', b') \textnormal{ and } p = p'.
\end{equation*}
The values of any two states that can be related by $\mless$ can be compared using Proposition \ref{prop:Vtmono}. The main idea of the algorithm is that every observation $\hat{v}_t^n(S_t^n)$ is smoothed with the previous estimate of the value of $S_t^n$ and the resulting smoothed estimate $z_t^n(S_t^n)$ can be used to generalize to the rest of the state space by means of a monotonicity preserving operator, $\Pi_M$. Let $s \in \mathcal S$ be an arbitrary state that has a current estimated value of $v$. After $z_t^n(S_t^n)$ is known, $\Pi_M$ adjusts the value of $s$ in the following way:
\begin{equation}
\Pi_M(S_t^n,z_t^n,s,v) = \left\{
	\begin{array}{ll}
		z_t^n & \mbox{if } s = S_t^n, \vspace{.6em} \\
		z_t^n \vee v  & \mbox{if } S_t^n \mless s ,\;s \ne S_t^n,
\vspace{.6em} \\
		z_t^n \wedge v & \mbox{if } s \mless S_t^n,\;s \ne S_t^n,
\vspace{.6em} \\
	v & \mbox{otherwise.}
	\end{array}
\right.
\label{genprojection}
\end{equation}

First, we note that if monotonicity is already satisfied, then nothing changes because in the second and third cases of (\ref{genprojection}), we get that $z_t^n \vee v = v$ and $z_t^n \wedge v = v$, respectively. If, however, monotonicity is violated, then the newly observed value $z_t^n$ prevails and replaces the previous value of $v$. Figure \ref{fig:projection} shows an example of this operation for the two bids $b_{t-1}^-$ and $b_{t-1}^+$. In the illustration, assume that the observations are made for fixed values of $R_t$ and $L_t$, but note that when we run the algorithm, this adjustment is made over all four dimensions. The figure should be interpreted as a three--dimensional plot of the value function, where all state variables besides $b_{t-1}$ are fixed. Each bid pair, $b_{t-1}=(b_{t-1}^-,b_{t-1}^+)$, is associated with a $z$--coordinate value represented by colors in gray scale (darker colors correspond to larger values). In the first and third plots, new observations arrive, and in the second and fourth plots, we see how the $\Pi_M$ operator uses monotonicity to generalize the observed values to the rest of the state space.

\begin{figure}[h]
	\begin{center}
	\includegraphics[scale=.4]{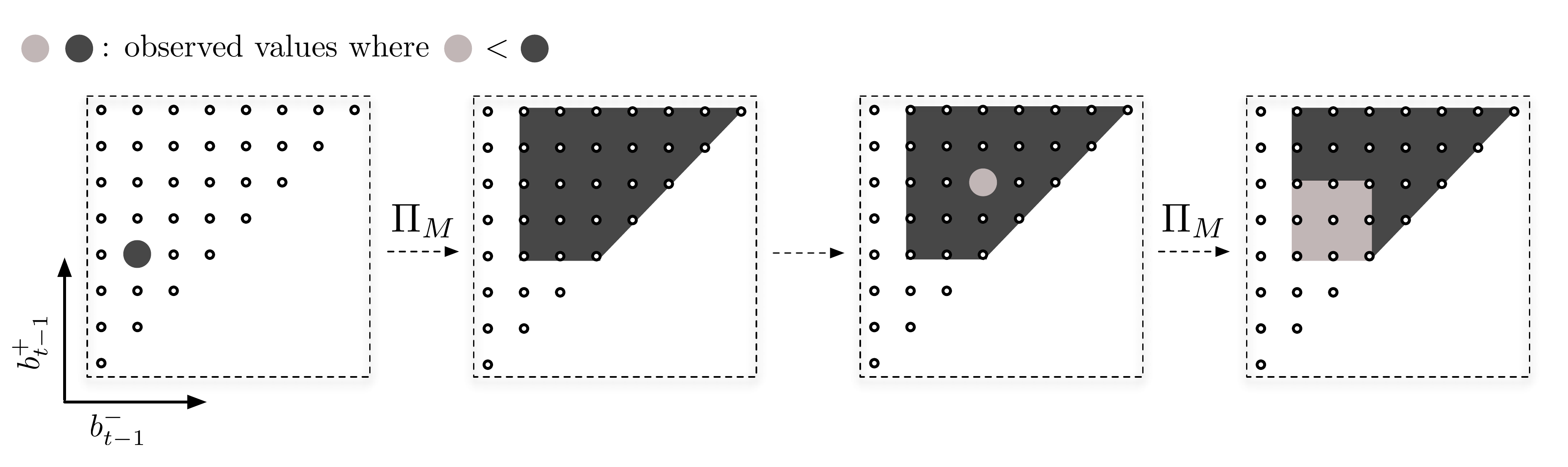}\\
	\end{center}
	\caption{Illustration of Monotonicity Preservation (Darker Colors = Larger Values)}
	\label{fig:projection}
\end{figure}
%
The stepsize sequence used for smoothing in new observations with the previous approximation is denoted $\alpha_t^n$, which can be thought of as a (possibly stochastic) sequence in $n$, for each $t$. Furthermore, states that are not visited do not get updated unless the update is made through the operator $\Pi_M$, so we also define:
\begin{equation*}
\alpha_t^n(s) = \alpha_t^{n-1} \, \textbf{1}_{\{s=S_t^n\}}.
\end{equation*}
For notational purposes, let us also define the history of the algorithm until iteration $n$ by the filtration
\begin{equation*}
\mathcal F^n = \sigma \bigl\{\bigl(S_{t}^m,\; w_{t}^m(S_{t}^m)\bigr)_{\; m\le n,\; t \le T}\bigr\}.
\end{equation*}
\subsection{Algorithm Description and Convergence}
The full description of the algorithm is given in Figure \ref{fig:algorithm}.
\begin{figure}[tb,h] 
\mbox{}\hrulefill\mbox{}
\begin{description}[leftmargin=5.2em,style=nextline]
    \item[Step 0a.]Initialize $\widebar{V}_t^0(s) = 0$ for each $t \le T-1$ and $s \in \mathcal S$.
    \vspace{.3em}
    \item[Step 0b.]Set $\widebar{V}_{T}^n(s) = 0$ for each $s \in \mathcal{S}$ and $n \le N$.
    \vspace{.3em}
    \item[Step 0c.]Set $n=1$.
    \vspace{.3em}    
    \item[Step 1.]Select an initial state $S_0^n$.
    \vspace{.3em}
    \item[Step 2.]For $t=0,1, \ldots, (T-1)$:
  	\vspace{.3em}    
    \begin{description}[leftmargin=5.2em,style=nextline]
    	\item[Step 2a.] Sample a noisy observation:\\
    	\vspace{.3em}
    	\quad $\displaystyle \hat{v}_t^n(S_t^n) =\max_{b_t \in \mathcal B} \Bigl [C_{t,t+2}(S_t^n,b_t)+\mathbf{E}\bigl[\widebar{V}^{n-1}_{t+1}(S_{t+1})\,|\,S_t^n\bigr] \Bigr ]+w_t^n(S_t^n)$.
    	\vspace{.3em}
    	\item[Step 2b.] Smooth in the new observation with previous value:\\
    	\vspace{.3em}
    	\quad $z_t^n(S_t^n) = \bigl(1-\alpha_t^{n}(S_t^n)\bigr) \, \widebar{V}_t^{n-1}(S_t^n)+\alpha_t^{n}(S_t^n) \, \hat{v}_t^n(S_t^n)$.
    	\vspace{.3em}
    	\item[Step 2c.] Perform monotonicity projection operator. For each $s \in \mathcal S$:\\
    	\vspace{.3em}
    	\quad $\widebar{V}_t^n(s) = \Pi_M(S_t^n,z_t^n,s,\widebar{V}_t^{n-1}(s))$.
    	\vspace{.3em}
    	\item[Step 2d.] Choose the next state $S_{t+1}^n$ given $\mathcal F^{n-1}$.
    \end{description}
    \vspace{.5em}
    \item[Step 3.] If $n < N$, increment $n$ and return \textbf{Step 1}.
\end{description}
 \mbox{}\hrulefill\mbox{}
\caption{Monotone--ADP--Bidding Algorithm for Training a Bidding Policy}
\label{fig:algorithm}
\end{figure}
Monotone--ADP--Bidding can be shown to converge; we reproduce the set of assumptions and the resulting theorem here.
\begin{assumption}
For all $s \in \mathcal S$ and $t \le T$,
\begin{equation*}
\sum_{n=1}^\infty \mathbf{P}\bigl(S_t^n = s\, |\, \mathcal F^{n-1}\bigr) = \infty \quad a.s.,
\end{equation*}
i.e, every state is visited infinitely often (see the Extended Borel--Cantelli Lemma in \cite{Breiman1992}).
\label{ass:one}
\end{assumption}

\begin{assumption}
The optimal value function $V_t^*(s)$ and the observations $\hat{v}_t^n(S_t^n)$ are bounded above and below by $\pm \Vmax$, where $\Vmax$ is a positive constant.
\label{ass:two}
\end{assumption}
\begin{assumption}
The noise sequence $w_t^n$ satisfies $\mathbf{E}\bigl[w_t^{n+1}(s)\,|\,\mathcal F^n\bigr] = 0$.
\label{ass:three}
\end{assumption}

\begin{assumption}
For each $t \le T$ and state $s$, suppose $\alpha_t^n \in [0,1]$ is $\mathcal F^n$--measurable and
\vspace{0.5em}
\begin{enumerate}[label=(\roman*),labelindent=1in]
\item $\displaystyle \sum_{n=0}^\infty \alpha_t^n(s) = \infty \quad a.s.$,
\vspace{0.5em}
\item $\displaystyle \sum_{n=0}^\infty \alpha_t^n(s)^2 < \infty \quad a.s.$\\
\end{enumerate}\label{ass:four}
\end{assumption}
\begin{thm}
Under Assumptions \ref{ass:one}--\ref{ass:four}, for each $t \le T$ and $s \in \mathcal S$, the estimates $\widebar{V}_t^n(s)$ produced by the Monotone--ADP--Bidding Algorithm of Figure \ref{fig:algorithm}, converge to the optimal value function $V_t^*(s)$ almost surely.
\label{maintheorem}
\end{thm}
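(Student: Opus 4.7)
The plan is to prove convergence by backward induction on $t$. The base case is handled automatically by Step 0b, which sets $\widebar{V}_T^n(s) = C_{\textnormal{term}}(s) = V^*_T(s)$ identically in $n$. For the inductive step, suppose $\widebar{V}^n_{t+1}(s) \to V^*_{t+1}(s)$ almost surely for every $s \in \mathcal S$, and fix an arbitrary state $s^\ast \in \mathcal S$. The goal is to show $\widebar{V}^n_t(s^\ast) \to V^*_t(s^\ast)$ almost surely.

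First I would analyze what happens on the subsequence of iterations in which $s^\ast$ is actually visited, i.e., $S_t^n = s^\ast$. On this (random) subsequence, Step 2a produces noisy observations whose deterministic part is, by the inductive hypothesis and the finiteness of $\mathcal B$, asymptotically equal to the Bellman target defining $V^*_t(s^\ast)$. Combined with Assumption \ref{ass:three} on the noise, Assumption \ref{ass:two} on boundedness, and the Robbins--Monro stepsize conditions in Assumption \ref{ass:four}, the smoothed recursion in Step 2b would converge to $V^*_t(s^\ast)$ if it were the only operation touching that entry. Assumption \ref{ass:one} ensures that this subsequence is almost surely infinite so that these conditions are actually exercised.

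The main obstacle, and the reason the algorithm requires a dedicated analysis rather than an off-the-shelf asynchronous stochastic approximation argument, is Step 2c: the monotonicity projection can overwrite the value at $s^\ast$ at iterations when some other comparable state is visited. To handle this, I would construct upper and lower envelope processes $U^n_t(s)$ and $L^n_t(s)$, initialized at $\pm\Vmax$ and updated by the same smoothing rule as Step 2b, and then show by induction in $n$ that $L^n_t(s) \le \widebar{V}^n_t(s) \le U^n_t(s)$ for every $s$ and every $n$. The critical ingredient is Proposition \ref{prop:Vtmono}: since $V^*_t$ itself respects the partial order $\mless$, the true value function is a fixed point of $\Pi_M$, so applying $\Pi_M$ to an approximation that is already close to $V^*_t$ can only pull outlier entries toward $V^*_t$, never away. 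Standard asynchronous stochastic approximation applied to the two envelopes then gives $U^n_t, L^n_t \to V^*_t$ almost surely, and the sandwich forces $\widebar{V}^n_t \to V^*_t$.

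The envelope construction and the verification that Assumptions \ref{ass:one}--\ref{ass:four} match the abstract hypotheses of the general monotone--ADP convergence theorem are carried out in \cite{Jiang2013}; the proof here specializes that machinery to the bidding state space $\mathcal S$ and the particular partial order $\mless$ defined above, with the monotonicity of $V^*_t$ supplied by Proposition \ref{prop:Vtmono} (and that of the contribution by Proposition \ref{prop:Ctmono}, which is what makes the Bellman operator preserve $\mless$ in the first place). The hardest conceptual point is not the Robbins--Monro piece, which is classical, but rather ruling out pathological interactions between the asynchronous single-state smoothing in Steps 2a--2b and the global propagation through $\Pi_M$ in Step 2c; the envelope sandwich is precisely what rules this out.
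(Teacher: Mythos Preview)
The paper's own proof of Theorem~\ref{maintheorem} is a one--line citation to \cite{Jiang2013}, so you are right that the heavy lifting lives there; and since you also ultimately defer to that reference, on the surface the two agree. However, your sketch of what that argument contains diverges from the actual strategy (which the paper exposes in its proof of Theorem~\ref{postdecisiontheorem} via Lemmas~\ref{lem:one}--\ref{lem:two}), and the divergence is not cosmetic.

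First, the bounding objects in \cite{Jiang2013} are not stochastic envelope processes $U^n_t$, $L^n_t$ initialized at $\pm\Vmax$ and updated by the same stochastic smoothing rule as Step~2b. They are \emph{deterministic} sequences $U^k$, $L^k$ indexed by a separate counter $k$, defined over all of $\{0,\dots,T\}\times\mathcal S$ at once by iterating and averaging the Bellman operator $H$ (cf.\ Lemma~\ref{lem:one}); convergence $U^k,L^k\to V^*$ follows from properties of $H$, not from Robbins--Monro. The proof then shows that for each fixed $k$ the random iterates $\widebar V^n$ are eventually sandwiched between $L^k$ and $U^k$. This is structurally different from running two parallel stochastic approximations and sandwiching between them, and it is not organized as backward induction on $t$.

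Second, and more importantly, your treatment of Step~2c contains a gap. The assertion that ``applying $\Pi_M$ to an approximation that is already close to $V^*_t$ can only pull outlier entries toward $V^*_t$, never away'' is not correct: $\Pi_M$ acts using the freshly observed $z_t^n$, and a single large noisy observation at one state can, through $\Pi_M$, overwrite the values at \emph{every} state above it in $\mless$, pushing many of them away from $V^*_t$ simultaneously. The monotonicity of $V^*_t$ (Proposition~\ref{prop:Vtmono}) does not prevent this. The way \cite{Jiang2013} actually controls $\Pi_M$ is to split states into those affected by $\Pi_M$ finitely often versus infinitely often, handle the former by ordinary stochastic approximation, and handle the latter by an induction along the partial order (states ``immediately less than'' a given state), which is precisely where the monotonicity of the deterministic bounds $U^k,L^k$ (Lemma~\ref{lem:two}) enters. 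Your envelope--plus--sandwich heuristic does not supply a mechanism that survives these infinitely--often overwrites, so as written the sketch would not close.
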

\begin{proof}
The proof is based on the result for a generalized MDP with a monotone value function in \cite{Jiang2013}. 
\end{proof}

\subsection{Approximating the Expectation}
Our algorithm can be applied to any model of spot prices $P_t$, with the caveat that more complex models generally require a higher dimensional state space. These include diffusion models (i.e., \cite{Schwartz1997}, \cite{Cartea2005}, \cite{Coulon2012}), which often incorporate features such as Ornstein--Uhlenbeck processes, jump terms, and regime switching. Recently, there has also been interest in structural models of electricity prices, where the underlying supply, demand, and bid--stack behavior is taken into account; see \cite{Carmona2012} for a survey.

The fact that the state space becomes larger or higher dimensional is indeed a computational difficulty that requires the availability of more powerful computational resources, but the convergence of the algorithm is unaffected (as long as $P_t^S$ is properly discretized). On the other hand, any model without finite support (or finite, but with large cardinality) necessitates the approximation of an expectation using a sample mean in Step 2a of the Monotone--ADP--Bidding algorithm (see Figure \ref{fig:algorithm}). In other words, because the expectation
$\mathbf{E}\bigl(\widebar{V}^{n-1}_{t+1}(S_{t+1})\,|\,S_t^n\bigr)$ of Step 2a is, in general, impossible to compute, we must resort to letting $v_t^n(S_t^n)$ be the solution to the \emph{sample average approximation} (see \cite{Kleywegt2002}) problem:
\begin{equation}
\max_{b_t \in \mathcal B} \biggl [C_{t,t+2}(S_t^n,b_t)+ J^{-1}\sum_{j=1}^J\widebar{V}^{n-1}_{t+1}(S^j_{t+1}) \biggr ],
\label{eq:saa}
\end{equation}
where $S_{t+1}^j$ are samples drawn independently from the distribution $S_{t+1}\,|\,S_t^n$. Suppose we take the observation $\hat{v}_t^n(S_t^n)$ to be the value of (\ref{eq:saa}). By an interchange of the conditional expectation and the max operator, we see that:
\begin{align*}
\mathbf{E} \biggl [\max_{b_t \in \mathcal B} \Bigl [C_{t,t+2}(S_t^n,b_t) &+ J^{-1}\sum_{j=1}^J\widebar{V}^{n-1}_{t+1}(S^j_{t+1}) \Bigr] \, \Bigr| \, S^{n}_t\biggr] \ge\\
 &\max_{b_t \in \mathcal B} \Bigl [C_{t,t+2}(S_t^n,b_t)+ \mathbf{E}\bigl(\widebar{V}^{n-1}_{t+1}(S_{t+1}) \, \bigr|\, S_t^n \bigr) \Bigr] \quad a.s.,
\end{align*}
and thus, after conditioning on $\mathcal F^{n-1}$ on both sides, we see that $\mathbf{E}[w_t^n(S_t^n)|\mathcal{F}^{n-1}]$ is biased upward from zero, a contradiction of Assumption \ref{ass:three}. When $J$ is large, we can certainly solve the sample average approximation problem in Step 2a and apply the algorithm as is and expect an effective heuristic solution. Practically speaking, our informal tests (using $J=1000$ on a diffusion price model) showed no significant convergence issues. Even so, we cannot claim that such an approximation produces a theoretically sound and convergent algorithm due to the biased noise term. This calls for us to propose another version of Monotone--ADP, for which Assumption \ref{ass:three} can be easily satisfied, without restricting to price process models that facilitate an easily computable expectation of the downstream value. 

\subsection{Post--Decision, Distribution--Free Approach}
\label{sec:pdalg}
Using the idea of a post--decision state (see \cite{Powell2011}), we can make a small adjustment to the algorithm, so that Assumption \ref{ass:three} is satisfied. In the case of the hourly bidding problem, the post--decision state $S_t^b$ is the state--action pair $(S_t, b_t)$. Oftentimes, post--decision states help simplify the computational aspect of an MDP, but unfortunately, for this problem instance, the post--decision state space is higher dimensional than the pre--decision state space. Let $S_t^b = (S_t,b_t) \in \mathcal S^b$ and define the post--decision value function
\begin{equation*}
V_t^b(S_t^b)=V_t^b(S_t,b_t) = \mathbf{E}\bigl[V^*_{t+1}(S_{t+1})\,|\,S_t^b\bigr].
\end{equation*}
Notice that we can rewrite Bellman's optimality equation as:
\begin{equation}
V^b_{t-1}(S_{t-1}^b) = \mathbf{E} \Bigl[\max_{b_t \in \mathcal B} \bigl [C_{t,t+2}(S_t,b_t)+V_t^b(S_t^b) \bigr ] \,|\, S_{t-1}^b \Bigr ].
\label{bellmanpost}
\end{equation}
Instead of attempting to learn $V_t^*$, the idea now is to algorithmically learn the post--decision value function $V_t^b$ using the relation (\ref{bellmanpost}) and to implement the policy by solving
\begin{equation*}
b_t^*=\argmax_{b_t \in \mathcal B} \bigl [C_{t,t+2}(S_t,b_t)+V_t^b(S_t^b) \bigr ].
\end{equation*}
Not surprisingly, the post--decision value function $V_t^b$ also satisfies a monotonicity property, over six dimensions.
\begin{restatable}{prop}{Vtpostmonoprop}
The post--decision value function $V^b_t(S_t^b)$, with $S_t^b = (R_t, L_t, b_{t-1},b_t,P_t^S)$ is nondecreasing in $R_t$, $L_t$, $b_{t-1}^-$, $b_{t-1}^+$, $b_t^-$, and $b_t^+$.\label{prop:Vtpostmono}
\end{restatable}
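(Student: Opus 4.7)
The plan is to reduce the post--decision statement to the already--established pre--decision monotonicity (Proposition \ref{prop:Vtmono}) by working pathwise on the exogenous prices. Let $S_t^b = (R_t,L_t,b_{t-1}^-,b_{t-1}^+,b_t^-,b_t^+,P_t^S)$ and $\tilde S_t^b = (\tilde R_t,\tilde L_t,\tilde b_{t-1}^-,\tilde b_{t-1}^+,\tilde b_t^-,\tilde b_t^+,P_t^S)$ with every coordinate in the first six slots dominated by the tilde version and a common price state. The goal is $V_t^b(S_t^b) \le V_t^b(\tilde S_t^b)$.

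First I would unpack the system model (\ref{eq:statetrans}) to write
\begin{equation*}
S_{t+1} = \bigl(g^R(R_t,P_{(t,t+1]},b_{t-1}),\;g^L(L_t,P_{(t,t+1]},b_{t-1}),\;b_t,\;P^S_{t+1}\bigr),
\end{equation*}
and the analogous expression for $\tilde S_{t+1}$. Fix any realization of the exogenous pair $(P_{(t,t+1]},P^S_{t+1})$ drawn from its conditional law given $P_t^S$; crucially, the no--price--impact assumption makes this conditional law depend only on $P_t^S$, which is common to both $S_t^b$ and $\tilde S_t^b$, so the same sample path may legitimately be fed through both dynamics. Applying Proposition \ref{gmono} coordinatewise to $g^R$ and Proposition \ref{glmono} to $g^L$ shows that the first two coordinates of $S_{t+1}$ are dominated by those of $\tilde S_{t+1}$; the previous--bid slot moves from $b_t$ to $\tilde b_t \ge b_t$ coordinatewise; and $P^S_{t+1}$ agrees. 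Hence $S_{t+1} \mless \tilde S_{t+1}$ in the partial order defined on $\mathcal S$.

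Next, invoking Proposition \ref{prop:Vtmono} on the pre--decision value function yields $V^*_{t+1}(S_{t+1}) \le V^*_{t+1}(\tilde S_{t+1})$ pathwise. Taking conditional expectations on both sides with respect to the common price conditioning gives
\begin{equation*}
V_t^b(S_t^b) = \mathbf{E}\bigl[V^*_{t+1}(S_{t+1}) \,|\, S_t^b\bigr] \le \mathbf{E}\bigl[V^*_{t+1}(\tilde S_{t+1}) \,|\, \tilde S_t^b\bigr] = V_t^b(\tilde S_t^b),
\end{equation*}
which establishes monotonicity in all six listed coordinates simultaneously (and hence, by specializing to perturbations of a single coordinate at a time, in each coordinate separately).

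The only delicate point, and the one I would want to state explicitly before invoking it, is the pathwise coupling: one must justify that $P_{(t,t+1]}$ and $P^S_{t+1}$ can be drawn from a distribution that is a function of $P_t^S$ alone, so that the same randomness can be used for both $S_t^b$ and $\tilde S_t^b$. This is precisely the exogeneity/no--price--impact modeling assumption built into the MDP in Section \ref{sec:mathformulation}; once it is noted, the rest is a direct composition of Propositions \ref{gmono}, \ref{glmono}, and \ref{prop:Vtmono} with monotonicity of conditional expectation.
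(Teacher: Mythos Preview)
Your proposal is correct and follows essentially the same approach as the paper: a pathwise coupling on the exogenous prices, transition monotonicity via Propositions \ref{gmono} and \ref{glmono}, and then invoking Proposition \ref{prop:Vtmono} on $V^*_{t+1}$ before taking expectations. The only organizational difference is that the paper treats the six coordinates in two batches---monotonicity in $(R_t,L_t,b_{t-1}^-,b_{t-1}^+)$ is inherited directly from the proof of Proposition \ref{prop:Vtmono} (where $V_t^b$ already appears), and then monotonicity in $(b_t^-,b_t^+)$ is argued separately by noting that only the bid slot of $S_{t+1}$ changes---whereas you handle all six simultaneously; your unified treatment is arguably cleaner, but the underlying argument is identical.
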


Let $\widebar{V}_t^{b,\,n}$ be the iteration $n$ approximation of the post--decision value function, $S_t^{b,\,n}$ be the state visited by the algorithm in iteration $n$, $\hat{v}_{t}^{b,\,n}(S_{t}^{b,\,n})$ be an observation of $V_{t}^b(S_{t}^{b,\,n})$ using the iteration $n-1$ approximation, $w_{t}^{b,\,n}(S_{t}^{b,\,n})$ be the observation noise, $\mathcal F^{b,\,n}$ be a filtration defined analogously to $\mathcal F^n$, and $\Pi_M^b$ be the monotonicity preservation operator on $\mathcal S^b$ defined analogously to $\Pi_M$. More precisely,
\begin{equation}
\hat{v}_{t}^{b,\,n}(S_{t}^{b,\,n}) = 
    	    	\mathbf{E} \Bigl [ \max_{b_{t+1} \in \mathcal B} \bigl [C_{t+1,t+3}(S_{t+1},b_{t+1})+\widebar{V}_{t+1}^{b,\,n-1}(S_{t+1}^b) \bigr] \,\bigl|\, S_t^{b,n} \Bigr ] + w_{t}^{b,n}(S_t^{b,n}),
\end{equation}
\begin{equation}
\mathcal F^{b,n} = \sigma\bigl\{\bigl(S_{t}^{b,m},\; w_{t}^{b,m}(S_{t}^{b,m})\bigr)_{\; m\le n,\; t \le T}\bigr\},
\end{equation}
and
\begin{equation}
\Pi_M^b(S_t^{b,n},z_t^{b,n},s,v) = \left\{
	\begin{array}{ll}
		z_t^{b,n} & \mbox{if } s = S_t^{b,n}, \vspace{.6em} \\
		z_t^{b,n} \vee v  & \mbox{if } S_t^{b,n} \mless^b s ,\;s \ne S_t^{b,n},
\vspace{.6em} \\
		z_t^{b,n} \wedge v & \mbox{if } s \mless^b S_t^{b,n},\;s \ne S_t^{b,n},
\vspace{.6em} \\
	v & \mbox{otherwise,}
	\end{array}
\right.
\label{genprojection2}
\end{equation}
where $s = (r, l, b_1, b_2, p) \mless^b s' =  (r', l', b_1', b_2', p')$ with $r,r' \in \mathcal R$, $l,l' \in \mathcal L$, $b_1,b_1',b_2,b_2' \in \mathcal B$, and $p,p' \in \mathcal P$ if and only if 
\begin{equation*}
(r, l, b_1, b_2) \le (r', l', b_1', b_2') \textnormal{ and } p = p'.
\end{equation*}
The new algorithm for post--decision states is shown in Figure \ref{fig:algorithm2}, and a set of analogous assumptions are provided below. We remark that by definition, $V_{T-1}^b(S_{T-1}^b) = \mathbf{E}\bigl[ C_\textnormal{term}(S_{T})\, |\, S_{T-1}^b\bigr]$; thus, we only need to loop until $T-2$ in Step $2$.
\begin{figure}[tb,h] 
\mbox{}\hrulefill\mbox{}
\begin{description}[leftmargin=4.68em,style=nextline]
    \item[Step 0a.]Initialize $\widebar{V}_t^{b,\,0}(s) = 0$ for each $t \le T-1$ and $s \in \mathcal S^b$.
    \vspace{.3em}
    \item[Step 0b.]Set $\widebar{V}_{T}^{b,\, n}(s) = 0$ for each $s \in \mathcal{S}^b$ and $n \le N$.
    \vspace{.3em}
    \item[Step 0c.]Set $n=1$.
    \vspace{.3em}    
    \item[Step 1.]Select an initial state $S_{0}^{b,\,n}=(S_{0}^n,b_{0}^n)$.
    \vspace{.3em}
    \item[Step 2.]For $t=0, \ldots, (T-2)$:
  	\vspace{.3em}    
    \begin{description}[leftmargin=4.68em,style=nextline]
    	\item[Step 2a.] Sample a noisy observation:\\
    	\vspace{.3em}
    	$\displaystyle \hat{v}_{t}^{b,n}(S_{t}^{b,n}) =
    	    	\mathbf{E} \Bigl [ \max_{b_{t+1} \in \mathcal B} \bigl [C_{t+1,t+3}(S_{t+1},b_{t+1})+\widebar{V}_{t+1}^{b,\,n-1}(S_{t+1}^b) \bigr]  \,\bigl|\, S_{t}^{b,n}  \Bigr] + w_{t}^{b,n}(S_t^{b,n})$.
    	\item[Step 2b.] Smooth in the new observation with previous value:\\
    	\vspace{.3em}
    	\enspace $z_{t}^{b,\,n}(S_{t}^{b,\,n}) = \bigl(1-\alpha_{t}^{n}(S_{t}^{b,\,n})\bigr) \, \widebar{V}_{t}^{b,\,n-1}(S_{t}^{b,\,n})+\alpha_{t}^{n}(S_{t}^{b,\,n}) \, \hat{v}_{t}^{b,\,n}(S_{t}^{b,\,n})$.
    	\vspace{.3em}
    	\item[Step 2c.] Perform monotonicity preservation operator. For each $s \in \mathcal S^b$:\\
    	\vspace{.3em}
    	\enspace $\widebar{V}_{t}^{b,\,n}(s) = \Pi^b_M\bigl(S_{t}^{b,\,n},z_{t}^{b,\,n},s,\widebar{V}_{t}^{b,\,n-1}(s)\bigr)$.
    	\vspace{.3em}
    	\item[Step 2d.] Choose the next state $S_{t+1}^{b,\,n}$ given $\mathcal F^{b,\,n-1}$.
    \end{description}
    \vspace{.3em}
    \item[Step 3.] If $n < N$, increment $n$ and return \textbf{Step 1}.
\end{description}
 \mbox{}\hrulefill\mbox{}
\caption{Monotone--ADP--Bidding Algorithm using Post--Decision States}
\label{fig:algorithm2}
\end{figure}

\begin{assumption}
For all $s \in \mathcal S^b$ and $t \le T$,
\begin{equation*}
\sum_{n=1}^\infty \mathbf{P}\bigl(S_t^{b,n} = s \,|\, \mathcal F^{b,n-1}\bigr) = \infty \quad a.s.
\end{equation*}
\label{ass:five}
\end{assumption}
\begin{assumption}
The optimal post--decision value function $V_t^b(s)$ and the observations $\hat{v}_t^{b,n}(S_t^{b,n})$ are bounded above and below, by $\pm \Vmax$.
\label{ass:six}
\end{assumption}
\begin{assumption}
The noise sequence $w_t^{b,n}$ satisfies $\mathbf{E}\bigl[w_t^{b,n+1}(s)\,|\,\mathcal F^{b,n}\bigr] = 0.$
\label{ass:seven}
\end{assumption}

The advantage to applying this revised algorithm is that even when we cannot compute the expectation in Step 2a and must rely on sample paths, we can still easily satisfy Assumption \ref{ass:seven} (the unbiased noise assumption), unlike in the pre--decision case. To do so, we simply use:
\begin{equation*}
    	\hat{v}_{t}^{b,\,n}(S_{t}^{b,\,n}) = \max_{b_{t+1} \in \mathcal B} \Bigl [C_{t+1,t+3}(S^n_{t+1},b_{t+1})+\widebar{V}_{t+1}^{b,\,n-1}(S_{t+1}^n,b_{t+1}) \Bigr ],
\end{equation*}
where we transition from $S_{t}^{b,n}$ to $S_{t+1}^n$ using a \emph{single sample outcome} of prices $P_{(t,t+1]}$. Hence, the noise term $w_t^{b,n}(S_t^{b,n})$ is trivially unbiased.

Besides being able to work with more complex price models, the revised algorithm gives us another important advantage, especially for implementation in practice/industry. As long as historical data is available, a model of the real--time prices is \emph{not required} to train the algorithm. We propose an alternative idea: instead of fitting a stochastic model to historical data and then sampling $P_{(t,t+1]}$ from the model, we can simply take a price path directly from historical data. Since no specific knowledge regarding the distribution of prices is needed (besides the boundedness assumption needed for the convergence of the ADP algorithm), as previously mentioned, we refer to this as a \emph{distribution--free} approach, and the technique is employed in Section \ref{sec:casestudy}. We now state the convergence theorem for the post--decision state version of Monotone--ADP--Bidding. Because the convergence theory for the post--decision state version is not discussed in detail in \cite{Jiang2013}, we provide a sketch of the proof here. First, we define the following post--decision Bellman operator that acts on a vector of values $V \in \mathbb R^{T\cdot |\mathcal S^b|}$ (any $V$, not necessarily corresponding to the optimal value function), for $S_t^b \in \mathcal S^b$ and $t \le T$:
\begin{equation}
\bigl(HV\bigr)_t(S^b_t) = \left\{
	\begin{array}{ll}
		 \mathbf{E} \Bigl[\max_{b_{t+1} \in \mathcal B} \bigl [C_{t+1,t+3}(S_{t+1},b_{t+1})+V_{t+1}(S_{t+1}^b) \bigr ] \,|\, S_{t}^b \Bigr ]
		 & \mbox{for } t=0,1,2,\ldots,T-2, \vspace{.6em} \\
		 \mathbf{E}[ C_\textnormal{term}(S_{t+1}) \,|\, S_t^b] & \mbox{for } t=T-1. 
	\end{array}
\right.
\label{Hdef}
\end{equation}
Step 2a of the algorithm (Figure \ref{fig:algorithm2}) can thus be rewritten as:
\[
\hat{v}_{t}^{b,\,n}(S_{t}^{b,\,n}) = \bigl(H\widebar{V}^{b,n-1}\bigr)_t(S^{b,n}_t) + w_t^{b,n}(S_t^{b,n}).
\]

\begin{thm}
Under Assumptions \ref{ass:four}--\ref{ass:seven}, for each $t \le T$ and $s \in \mathcal S^b$, the estimates $\widebar{V}_t^{b,n}(s)$ produced by the post--decision version of Monotone--ADP--Bidding Algorithm of Figure \ref{fig:algorithm2}, converge to the optimal post--decision value function $V_t^b(s)$ almost surely.
\label{postdecisiontheorem}
\end{thm}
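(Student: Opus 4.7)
The plan is to adapt the Monotone--ADP convergence argument of \cite{Jiang2013} from the pre--decision setting to the post--decision setting, with the operator $H$ of (\ref{Hdef}) playing the role previously played by the pre--decision Bellman operator. The overall architecture has three parts: (i) verify that $H$ maps monotone functions (with respect to $\mless^b$) to monotone functions, so that the preservation operator $\Pi_M^b$ is consistent with the true fixed point; (ii) induct backward on $t$, using the terminal boundary $\widebar{V}_T^{b,n} \equiv 0$ as the base case; (iii) at each $t$, combine Assumption \ref{ass:five} (every post--decision state is visited infinitely often) with the Robbins--Monro conditions of Assumption \ref{ass:four} to drive a stochastic approximation argument pointwise at each state.

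First, I would check that $H$ preserves monotonicity in $\mless^b$. Fix $t$ and let $V$ satisfy the same partial order monotonicity as $V_{t+1}^b$. For $S_t^b \mless^b \tilde S_t^b$ the prices $P_t^S$ agree, so a common sample path $P_{(t,t+1]}$ can be used to couple the two transitions. Proposition \ref{gmono} and Proposition \ref{glmono} give $R_{t+1} \le \tilde R_{t+1}$ and $L_{t+1} \le \tilde L_{t+1}$; combined with the ordering $b_t \le \tilde b_t$ in the $b_{t-1}$--slot of the next state, the induction hypothesis on $V$ and the monotonicity of $C_{t+1,t+3}$ in $b_{t-1}$ (Proposition \ref{prop:Ctmono}) yield that the quantity being maximized in (\ref{Hdef}) is pointwise larger under the coupled path on the $\tilde S_t^b$ side. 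Since the max is taken over the same compact set $\mathcal B$, taking expectations preserves the inequality and $(HV)_t(S_t^b) \le (HV)_t(\tilde S_t^b)$. This confirms $V_t^b$ is monotone (reproving Proposition \ref{prop:Vtpostmono} via the dynamic--programming recursion) and, more importantly, shows that the values imposed by $\Pi_M^b$ at unvisited states never conflict with the true $V_t^b$ in the limit.

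Second, I would run backward induction on $t$. For $t = T-1$, the observation in Step 2a is an unbiased sample of $\mathbf{E}[C_\textnormal{term}(S_T)\,|\,S_{T-1}^{b,n}]$, independent of any downstream approximation. Restricted to the subsequence of iterations at which $S_{T-1}^{b,n} = s$, Step 2b is a classical Robbins--Monro recursion whose a.s. limit is $V_{T-1}^b(s)$ by Assumptions \ref{ass:four}, \ref{ass:five}, \ref{ass:six}, and \ref{ass:seven}; the $\Pi_M^b$ step only modifies \emph{other} states and never overwrites the value at the currently visited state. For the inductive step, assume $\widebar{V}_{t+1}^{b,n} \to V_{t+1}^b$ a.s.\ pointwise. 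The target operator error $\| H\widebar{V}^{b,n-1} - HV^b \|$ is handled by a standard squeeze: construct deterministic sequences $U_t^n$ and $L_t^n$ with $L_t^n(s) \le \widebar{V}_t^{b,n}(s) \le U_t^n(s)$, driven by the same noise as the algorithm but with the noise--free target replaced by $V_t^b \pm \epsilon_n$ where $\epsilon_n \downarrow 0$ comes from the induction hypothesis. Each of $U_t^n$ and $L_t^n$ is a pointwise stochastic approximation with vanishing target bias, hence converges to $V_t^b$.

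The main obstacle is the last step: showing that the $\Pi_M^b$ updates, which couple the iterates across states, do not disrupt the pointwise stochastic approximation at each individual state. The decisive observation is that $\Pi_M^b(S_t^{b,n},z_t^{b,n},s,v)$ only shifts $v$ toward $z_t^{b,n}$ when the monotone relation with respect to $\mless^b$ would be violated; because both $\widebar{V}_t^{b,n}$ and the limit $V_t^b$ are monotone in $\mless^b$, any such ``forced'' update at $s$ is automatically sandwiched between $L_t^n(s)$ and $U_t^n(s)$. Verifying this compatibility for the six--dimensional partial order (now including both $b_{t-1}$ and $b_t$ coordinates) is the genuinely new piece compared to the pre--decision argument, and it relies critically on the fact that the price coordinate $P_t^S$ appears as an equality in the definition of $\mless^b$, so the coupling by identical sample paths used in the monotonicity proof above is legitimate. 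Once this compatibility is established, applying the monotone--ADP convergence lemma of \cite{Jiang2013} to the post--decision Bellman recursion completes the induction and yields $\widebar{V}_t^{b,n} \to V_t^b$ almost surely for every $t \le T$ and every $s \in \mathcal S^b$.
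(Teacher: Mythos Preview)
Your proposal is broadly workable but follows a genuinely different architecture from the paper's proof. The paper does \emph{not} do backward induction on $t$. Instead it constructs the deterministic Tsitsiklis--style bounding sequences $L^k,U^k\in\mathbb{R}^{T\cdot|\mathcal S^b|}$ of Lemma~\ref{lem:one}, verifies in Lemma~\ref{lem:two} that every $L^k_t$ and $U^k_t$ is monotone in $\mless^b$, and then, for each fixed $k$, shows $L^k_t(s)\le\widebar{V}_t^{b,n}(s)\le U^k_t(s)$ for all large $n$ by inducting on the partial order $\mless^b$: first for states that are modified by $\Pi_M^b$ only finitely often, then propagating to states modified infinitely often via the immediate--predecessor relation. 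A final squeeze in $k$ gives the result. Your route instead decouples time, treating each $t$ as a Robbins--Monro recursion whose target bias vanishes once level $t+1$ has converged, with stochastic, iteration--indexed envelopes $L_t^n,U_t^n$. The paper's organization buys you freedom from controlling the \emph{rate} at which the level--$(t{+}1)$ bias vanishes relative to the stepsizes (everything is absorbed into the fixed $k$), while your organization avoids the nested induction on $k$ and on $\mless^b$.

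The weakest link in your argument is the handling of $\Pi_M^b$. Your claim that a forced update at $s$ is ``automatically sandwiched between $L_t^n(s)$ and $U_t^n(s)$'' presupposes two things you have not yet established: that your envelopes $L_t^n,U_t^n$ are themselves monotone in $\mless^b$, and that the smoothed value $z_t^{b,n}$ at the \emph{originating} state $S_t^{b,n}$ is already trapped between its own envelopes. Without an induction along $\mless^b$ (exactly what the paper supplies) this is circular, and invoking the Jiang--Powell lemma at the end does not quite rescue it because that lemma is stated for the $k$--indexed deterministic bounds, not your $n$--indexed stochastic ones. If you want to keep the backward--in--$t$ skeleton, you will still need to import the partial--order induction at each fixed $t$ to make the sandwich rigorous.
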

Before discussing the proof, we state two necessary lemmas (proofs in Appendix \ref{sec:appendix}). The idea of the first lemma is attributed to \cite{Tsitsiklis1994a}.

\begin{restatable}{lem}{lemone}
Define deterministic bounding sequences $L_t^k$ and $U_t^k$ in the following way. Let $U^0 = V^*+\Vmax\cdot e$ and $L ^0=V^*-\Vmax\cdot e$, where $e$ is a vector of ones. In addition, $U^{k+1}=(U^k+HU^k)/2$ and $L^{k+1} = (L^k+HL^k)/2$. Then, for each $s \in \mathcal S^b$ and $t \le T-1$,  
\begin{equation*}
\begin{aligned}
L_t^k(s) &\rightarrow V_t^b(s),\\
U_t^k(s) &\rightarrow V_t^b(s),
\end{aligned}
\label{eq:uvconv}
\end{equation*}
where the limit is in $k$.
\label{lem:one}
\end{restatable}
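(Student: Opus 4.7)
My plan is to combine a monotone sandwich argument with a backward induction in time, exploiting two structural properties of the post--decision Bellman operator $H$ defined in \eqref{Hdef}: (i) $H$ is monotone, since both the inner max and the expectation preserve the componentwise order, and (ii) $V^b$ is a fixed point, $HV^b = V^b$, which is just \eqref{bellmanpost}. The averaged operator $(I+H)/2$ inherits both properties, so I will show its iterates $U^k$ and $L^k$ funnel into $V^b$ from above and below.

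First I would verify the sandwich $L^k \le V^b \le U^k$ for all $k$ by induction on $k$. The base case $k=0$ follows from Assumption \ref{ass:six} (boundedness of $V^b$), together with a quick algebraic check on the specific form of $U^0$ and $L^0$. For the inductive step, assume $L^k \le V^b \le U^k$; monotonicity of $H$ gives $HL^k \le HV^b = V^b \le HU^k$, and averaging preserves the inequalities, so $L^{k+1} \le V^b \le U^{k+1}$.

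Next, I would establish quantitative convergence by backward induction on $t$. Let $\delta_t^k = \|U_t^k - V_t^b\|_\infty$. The crucial observation is that $(HV)_{T-1}$ does not depend on $V$ at all---it equals $V^b_{T-1}$ for every input---so at $t = T-1$ the recursion collapses to $U_{T-1}^{k+1} - V_{T-1}^b = (U_{T-1}^k - V_{T-1}^b)/2$, yielding geometric decay $\delta_{T-1}^k = 2^{-k}\delta_{T-1}^0 \to 0$. For $t \le T-2$, the non--expansiveness bound $|\max_a f(a) - \max_a g(a)| \le \max_a |f(a) - g(a)|$ followed by the expectation yields $\|(HU^k)_t - V_t^b\|_\infty \le \delta_{t+1}^k$, and hence
\[
\delta_t^{k+1} \le \tfrac{1}{2}\bigl(\delta_t^k + \delta_{t+1}^k\bigr).
\]
An elementary recursion lemma---if $a_{k+1} \le (a_k + b_k)/2$ with $b_k \to 0$ and $a_0$ bounded, then $a_k \to 0$ (pick $K$ so $b_k < \varepsilon$ for $k \ge K$ and iterate to get $a_{K+m} \le 2^{-m} a_K + \varepsilon$)---then propagates convergence from time $t+1$ back to time $t$, completing the backward induction. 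The argument for $L^k$ is identical with the inequalities reversed.

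The main obstacle is that $H$ is not a contraction in any standard norm in this undiscounted, finite--horizon setting, so direct Banach--type reasoning is unavailable. The proof works precisely because of the layered time structure: the terminal slot converges outright since $H$'s dependence on $V$ vanishes there, and each earlier slot then converges because its dependence on $V$ reaches only one layer forward in time. Finiteness of $T$ is therefore essential, and care will be needed both in preserving the sandwich across the averaging step and in the elementary recursion lemma that drives the backward induction.
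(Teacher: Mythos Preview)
Your proof is correct but takes a different, more self-contained route than the paper. The paper's proof verifies three abstract properties of $H$---monotonicity, the fixed-point identity $HV^b = V^b$, and the constant-shift inequality $HV - \eta e \le H(V - \eta e) \le H(V + \eta e) \le HV + \eta e$---and then invokes Lemma~4.6 of \cite{Bertsekas1996} as a black box to obtain convergence of the averaged iterates. You instead give a direct argument: the sandwich $L^k \le V^b \le U^k$ (which the paper's properties (i) and (ii) would also yield) is followed by a backward induction in $t$ that exploits the finite-horizon layering---$(HV)_{T-1}$ is independent of $V$, so the terminal layer converges geometrically, and the recursion $\delta_t^{k+1} \le \tfrac{1}{2}(\delta_t^k + \delta_{t+1}^k)$ then propagates convergence one step back at a time via your elementary recursion lemma. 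Your approach is longer but fully elementary and makes transparent why no discount factor is needed (the time layering does the work); the paper's is terser and situates the result within a standard monotone-operator framework, at the cost of hiding the mechanism inside the cited lemma. Note that your non-expansiveness bound $\|(HU^k)_t - V_t^b\|_\infty \le \delta_{t+1}^k$ is essentially the content of the paper's property (iii), so the two proofs share the same ingredients---they differ only in whether the convergence step is outsourced or done by hand.
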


\begin{restatable}{lem}{lemtwo}
$U^k$ and $L^k$ both satisfy the monotonicity property: for each $t$, $k$, and $s_1, s_2 \in \mathcal S^b$ such that $s_1 \mless^b s_2$,
\begin{equation}
\begin{aligned}
U_t^k(s_1) &\le U_t^k(s_2),\\
L_t^k(s_1) &\le L_t^k(s_2).
\end{aligned}
\label{eq:uvmono}
\end{equation}
\label{lem:two}
\end{restatable}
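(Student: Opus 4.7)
The plan is to prove both statements simultaneously by induction on $k$, with the key ingredient being that the post--decision Bellman operator $H$ (defined in (\ref{Hdef})) preserves monotonicity with respect to $\mless^b$. Once that preservation property is established, the recursion $U^{k+1} = (U^k + HU^k)/2$ and $L^{k+1} = (L^k + HL^k)/2$ immediately propagates monotonicity from iteration $k$ to iteration $k+1$, since an average of two nondecreasing functions is again nondecreasing.

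For the base case $k=0$, note that $V^b_t$ is itself nondecreasing in its arguments by Proposition \ref{prop:Vtpostmono}, and adding or subtracting the constant $\Vmax$ (via $\Vmax \cdot e$) does not break monotonicity, so both $U^0$ and $L^0$ satisfy (\ref{eq:uvmono}). For the inductive step, assume $U^k$ and $L^k$ satisfy the monotonicity property; I will argue $HU^k$ and $HL^k$ do as well, and then the averaging step finishes the argument.

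The heart of the proof is showing that if $V$ is monotone with respect to $\mless^b$, then so is $HV$. Fix $t \le T-2$ and let $s_1 = (r_1,l_1,b_1^{\text{prev}},b_1^{\text{new}},p) \mless^b s_2 = (r_2,l_2,b_2^{\text{prev}},b_2^{\text{new}},p)$. Couple the two transitions using the same sample path of prices $P_{(t,t+1]}$ and the same sample of $P_{t+1}^S$. By Propositions \ref{gmono} and \ref{glmono} (applied with $M$ subintervals), the resulting resource and life--remaining variables satisfy $R_{t+1}^{(1)} \le R_{t+1}^{(2)}$ and $L_{t+1}^{(1)} \le L_{t+1}^{(2)}$ pathwise; the ``previous bid'' slot of $S_{t+1}^b$ becomes $b_1^{\text{new}} \le b_2^{\text{new}}$; and the price state $P_{t+1}^S$ is identical. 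Therefore, for any fixed next--period bid $b_{t+1} \in \mathcal B$, the resulting post--decision states satisfy $S_{t+1}^{b,(1)} \mless^b S_{t+1}^{b,(2)}$ pathwise. Then by the inductive hypothesis $V_{t+1}(S_{t+1}^{b,(1)}) \le V_{t+1}(S_{t+1}^{b,(2)})$, and by Proposition \ref{prop:Ctmono} we also have $C_{t+1,t+3}(S_{t+1}^{(1)},b_{t+1}) \le C_{t+1,t+3}(S_{t+1}^{(2)},b_{t+1})$. Taking the max over $b_{t+1}$ preserves the pathwise inequality, and taking expectations yields $(HV)_t(s_1) \le (HV)_t(s_2)$. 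For the terminal case $t=T-1$, the same coupling plus the stated monotonicity of $C_\textnormal{term}$ in $R_T$, $L_T$, and $b_{T-1}$ gives $(HV)_{T-1}(s_1) \le (HV)_{T-1}(s_2)$.

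The main obstacle is organizing the coupling correctly so that the two components of the post--decision state (the ``old'' bid $b_{t-1}$ and the ``new'' bid $b_t$) are tracked separately through the transition: the old bid drives the monotone evolution of $R_{t+1}$ and $L_{t+1}$ via $g^R$ and $g^L$, while the new bid slides into the ``previous bid'' slot of the next post--decision state, and both inequalities must line up with $\mless^b$ simultaneously. Once this bookkeeping is set, the argument reduces to routine use of the monotonicity results established earlier and the basic facts that pointwise maxima and conditional expectations preserve inequalities.
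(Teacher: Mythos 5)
Your proposal is correct and follows essentially the same route as the paper: show that the post--decision Bellman operator preserves monotonicity by coupling the two transitions on a common price sample path, invoking Propositions \ref{gmono}, \ref{glmono}, and \ref{prop:Ctmono} together with the monotonicity of pointwise maxima and expectations, and then propagate through the averaging recursion by induction on $k$. Your explicit justification of the base case via Proposition \ref{prop:Vtpostmono} and your careful bookkeeping of the two bid slots are slightly more detailed than the paper's treatment, but the argument is the same.
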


\begin{proof}[Sketch of Proof of Theorem 2] With Lemmas \ref{lem:one} and \ref{lem:two}, we can proceed to show convergence of the post--decision state version of Monotone--ADP using the general steps to prove convergence of Monotone--ADP for \emph{pre--decision} states taken in \cite{Jiang2013}.
The steps are as follows:
\begin{enumerate}
\item Given a fixed $k$ and a state $s \in \mathcal S^b$ such that $s$ is increased \emph{finitely often} by the monotonicity preservation operator $\Pi_M^b$, then we can show that for any sufficiently large $n$,
\begin{equation}
L_t^k(s) \le \widebar{V}_t^{b,n}(s) \le U_t^k(s).
\label{eq:squeeze}
\end{equation} There exists at least one such state, i.e., the minimal state $(0,0,(\bmin,\bmin),P_t^S)$. Repeat the argument for states that are decreased finitely often by $\Pi_M^b$.
\item Next, we must show that states $s$ that are affected by $\Pi_M^b$ \emph{infinitely often} also satisfy (\ref{eq:squeeze}). This leverages the fact that the result has already been proven for states that are affected finitely often. The idea is that if all states immediately less than $s$ (i.e., $x$ is immediately less than $y$ if $x \mless^b y$ and there does not exist $z$ such that $x \mless^b z \mless^b y$) satisfy (\ref{eq:squeeze}), then $s$ satisfies (\ref{eq:squeeze}) as well. Lemma \ref{lem:two} and an induction argument are used in this part of the proof.
\item Finally, combining Lemma \ref{lem:one} along with the fact that all post--decision states $s \in \mathcal S^b$ satisfy (\ref{eq:squeeze}), it is easy to see that from a type of squeeze argument,
\[
\widebar{V}^{b,n}_t(s) \rightarrow V_t^b(s),
\]
for each $t$ and $s$, as desired.
\end{enumerate}
Note that both Steps (1) and (2) require Assumption \ref{ass:seven}, hence the focus that we have placed on it in this paper.
\end{proof}

\subsection{Stepsize Selection} The selection of the stepsize $\alpha_t^n$, also known as a \emph{learning rate}, can have a profound effect on the speed of convergence of an ADP algorithm. A common example of stepsize rule that satisfies Assumption \ref{ass:four} is simply:
\[
\alpha_t^n = \frac{1}{N(S_t^n,n)},
\]
where $N(S_t^n,n) = \sum_{m=1}^n \indicate{S_t^m=S_t^n}$ is the number of visits by the algorithm to the state $S_t^n$. The issue is that this method weighs all observations equally, even though we know that the error can be extremely large in early iterations of any ADP algorithm. See Chapter 11 of \cite{Powell2011} for an overview of the numerous available stepsize rules.

After some experimentation, we found that the bias--adjusted Kalman Filter (BAKF) developed in \cite{George2006}, performed better than simpler alternatives. The main idea behind BAKF is to choose $\alpha_t^n$ such that the mean squared error to the true value function is minimized; we omit the details and refer interested readers to the original paper.


\section{Benchmarking on Stylized Problems using Pre--Decision Monotone--ADP}
\label{sec:benchmarking}In this section, we present results of running Monotone--ADP--Bidding and traditional approximate value iteration on a tractable problem (i.e., the optimal solution is computable) in order to show the advantages of using $\Pi_M$. In this section, we consider both four and five dimensional versions of the sequential bidding problem. We first describe some simplifications to make benchmarking possible.

In order to benchmark the algorithm against a truly optimal solution, we make some simplifying assumptions (to be relaxed in the following section) so that backward dynamic programming can be used to compute an optimal solution. First, we suppose that $P_t$ has finite support and that $M=1$, so that the exact value of $\mathbf{E}\bigl[V_{t+1}(S_{t+1})\,|\,S_t\bigr]$ can be computed easily. When $M$ is larger, we can only compute an approximation to the expectation, due to the fact that an exponential in $M$ number of outcomes of the price process need to be considered for an exact result.

In addition, in the numerical work of this paper, we take the traditional approach and choose $C_\textnormal{term}(s)=0$; however, we remark that this may not always be the best choice in practice. See Section \ref{sec:additionalinsights} for further discussion on the issue of selecting a terminal contribution function.

To test the approximate policies, we compute a \emph{value of the policy} in the following way. For a particular set of value function approximations $\widebar{V}$, the set of decision functions can be written as
\begin{equation*}
\bar{B}_t(S_t) = \argmax_{b_t \in \mathcal B} \; \Bigl[C_{t,t+2}(S_t,b_t) + \mathbf{E}\bigl[\widebar{V}_{t+1}(S_{t+1})\,|\,S_t\bigr]\Bigr].
\end{equation*}
For a sample path $\omega \in \Omega$, let
\begin{equation*}
F\bigl(\widebar{V},\omega\bigr) = \sum_{t=0}^{T+1} {C}\bigl(R_{t+1}(\omega),L_{t+1}(\omega),P_{(t+1,t+2]}(\omega),\bar{B}_t(S_t)\bigr)
\end{equation*}
be a sample outcome of the revenue. We report the empirical value of the policy, which is the sample mean of $F\bigl(\widebar{V},\omega\bigr)$ over $1000$ sample paths $\omega$.

\subsection{Variation 1}
First, we consider a four dimensional variation of the bidding problem, where $S_t = (R_t, L_t, b_{t-1}^-, b_{t-1}^+)$. In particular, we assume that the price process has no state variables. Several versions of this problem are explored by altering the parameter values: a typical size for the batteries under consideration for the energy arbitrage application is $\Rmax = 6$ MWh, but we also allow values of $\Rmax = 12$ MWh and $\Rmax=18$ MWh for variety. The decision space is fixed in the following way: we set $\bmin = 15$ and $\bmax = 85$, and discretized linearly between $\bmin$ and $\bmax$ for a total of 30 possible values in each dimension of the bid. The price process $P_t$ has the form
\[
P_t = S(t) + \epsilon_t,
\]
where the sinusoidal (representing the hour--of--day effects on price) deterministic component is
\begin{equation*}
S(t) = 15 \, \sin(2\pi t/24)+50,
\end{equation*}
and $\epsilon_t \in \{0, \pm1, \pm2, \ldots, \pm20\}$, a sequence of mean zero i.i.d. random variables distributed according to the \emph{discrete pseudonormal distribution} with $\sigma_X^2 = 49$ (a discrete distribution where the probability masses are defined by the evaluating at the density function of $\mathcal N(0,\sigma_X^2)$ and then normalizing).
We consider both the cases where the battery age does and does not matter (by setting $\beta(l)=1$), in effect introducing an irrelevant state variable. When aging does matter, the aging function we use is $\beta(l) = (l/\Lmax)^{\frac16}$, which provides a roughly linear decline in efficiency from 100\% to around 70\%, followed by a much steeper decline. Lastly, in Problem 4, we considered a uniform distribution for the noise, while the remaining problems used pseudonormal noise. In line with the operation procedures of the NYISO, the undersupply penalty parameter $K$ is set to $1$ in our simulations --- this means that if one is unable to deliver energy to the market, then the penalty is precisely the current spot price (essentially, we are paying another generator to produce the energy instead). The different problems instances, labeled $A_1$--$F_1$, along with their state space cardinalities are summarized in Table \ref{table:problems1}. 


\begin{table}[h]
\centering
\tiny
\begin{tabular}{@{}ccccccccc@{}}\toprule
\textbf{Problem} & $T$ & $\Rmax$ & $\Lmax$ & $\beta(l)$ & \textbf{Distribution of} $\epsilon_t$ & \textbf{Cardinality of } $\mathcal S$\\
\midrule
 $A_1$ & $24$ & $6$ & $8$ & $1$ & Pseudonormal & 22{,}320 \vspace{0.2em} \\
$B_1$ & $24$ & $6$ & $8$ & $(l/8)^\frac{1}{6}$ & Pseudonormal & 22{,}320 \vspace{0.2em} \\
 $C_1$ & $36$ & $6$ & $8$ & $1$ & Pseudonormal & 22{,}320 \vspace{0.2em} \\
 $D_1$ & $24$ & $12$ & $12$ & $(l/12)^\frac{1}{6}$ & Uniform & 66{,}960 \vspace{0.2em} \\
 $E_1$ & $24$ & $12$ & $12$ & $(l/12)^\frac{1}{6}$ & Pseudonormal & 66{,}960 \vspace{0.2em} \\
 $F_1$ & $36$ & $18$ & $18$ & $(l/18)^\frac{1}{6}$ & Pseudonormal &	150{,}660 \vspace{0.2em} \\
\bottomrule
\end{tabular}
\vspace{1em}
\caption{Parameter Choices for Variation 1 Benchmark Problems}
\label{table:problems1}
\end{table}
%

\subsection{Numerical Results for Variation 1}
We first evaluate the effectiveness of Monotone--ADP--Bidding versus approximate value iteration, a traditional ADP algorithm (exactly the same as Monotone--ADP--Bidding with $\Pi_M$ removed); the results for Variation 1 are given in Table \ref{table:comparison2}.

Figure \ref{fig:compare} gives a quick visual comparison between the two types of approximate value functions, generated by approximate value iteration and Monotone--ADP--Bidding. We remark that after $N=1000$ iterations, the value function approximation in Figure \ref{subfig:adp} obtained by exploiting monotonicity has developed a discernible shape and structure, with a relatively wide range of values. The result in Figure \ref{subfig:avi}, on the other hand, is relatively unusable as a policy. 
\begin{figure}[ht]
        \centering
        \begin{subfigure}[b]{0.45\textwidth}
                \centering
                \includegraphics[width=\textwidth]{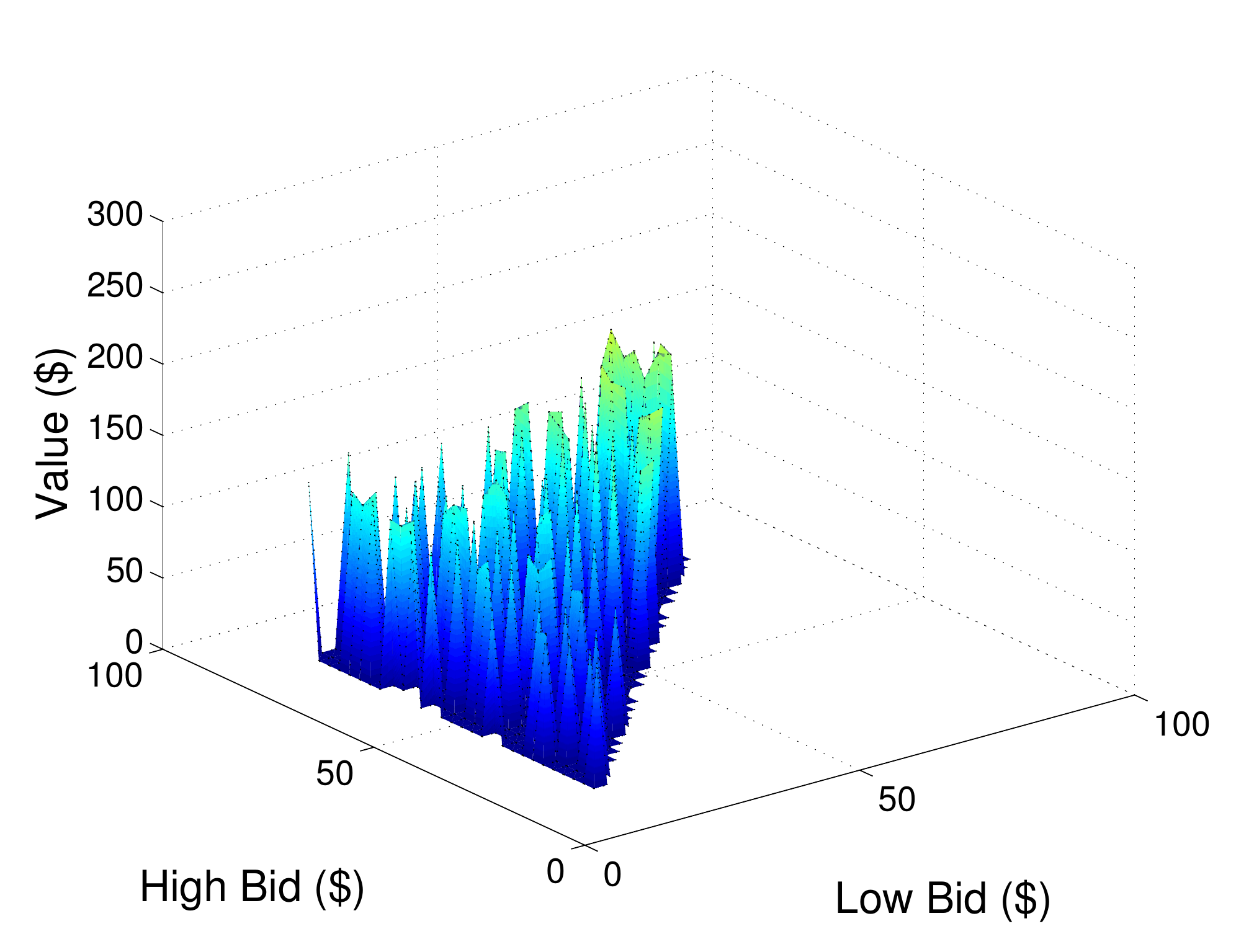}
                \caption{AVI, $N=1000$}\label{subfig:avi}
        \end{subfigure}%
        ~ 
        \begin{subfigure}[b]{0.45\textwidth}
                \centering
                \includegraphics[width=\textwidth]{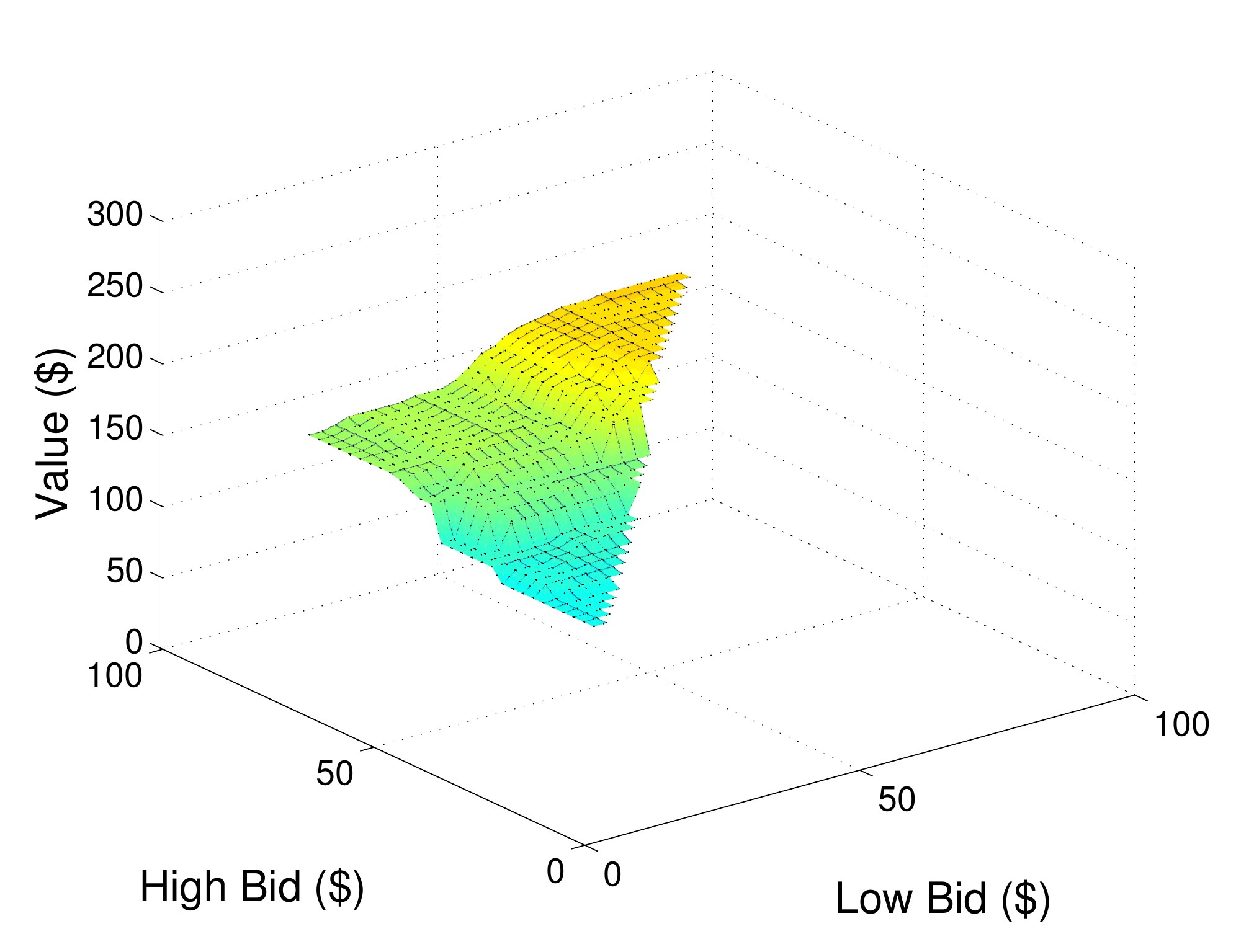}
                \caption{M--ADP, $N=1000$}\label{subfig:adp}
        \end{subfigure}
        \caption{Visual Comparison of Value Function Approximations for $t=12$ and $R_t=3$}
        \label{fig:compare}
\end{figure}

We notice that as the cardinality of the state space increases, the value of monotonicity preservation becomes more pronounced. This is especially evident in Problem $F$, where after $N=1000$ iterations, Monotone--ADP--Bidding achieves 45.9\% optimality while traditional approximate value iteration does not even reach 10\%. Although this finite state, lookup table version of approximate value iteration (for lookup table) is also a convergent algorithm (see Proposition 4.6 of \cite{Bertsekas1996}), its performance is markedly worse, especially when the state space is large. Because it exploits the monotone structure, Monotone--ADP--Bidding has the ability to quickly attain the general shape of the value function. Figure \ref{fig:earlyiter} illustrates this by showing the approximations at early iterations of the algorithm. These numerical results suggest that the convergence rate of the ADP algorithm is substantially increased through the use of the monotonicity preserving operation.

\begin{figure}[h]
        \centering
        \begin{subfigure}[b]{0.3\textwidth}
                \centering
                \includegraphics[width=\textwidth]{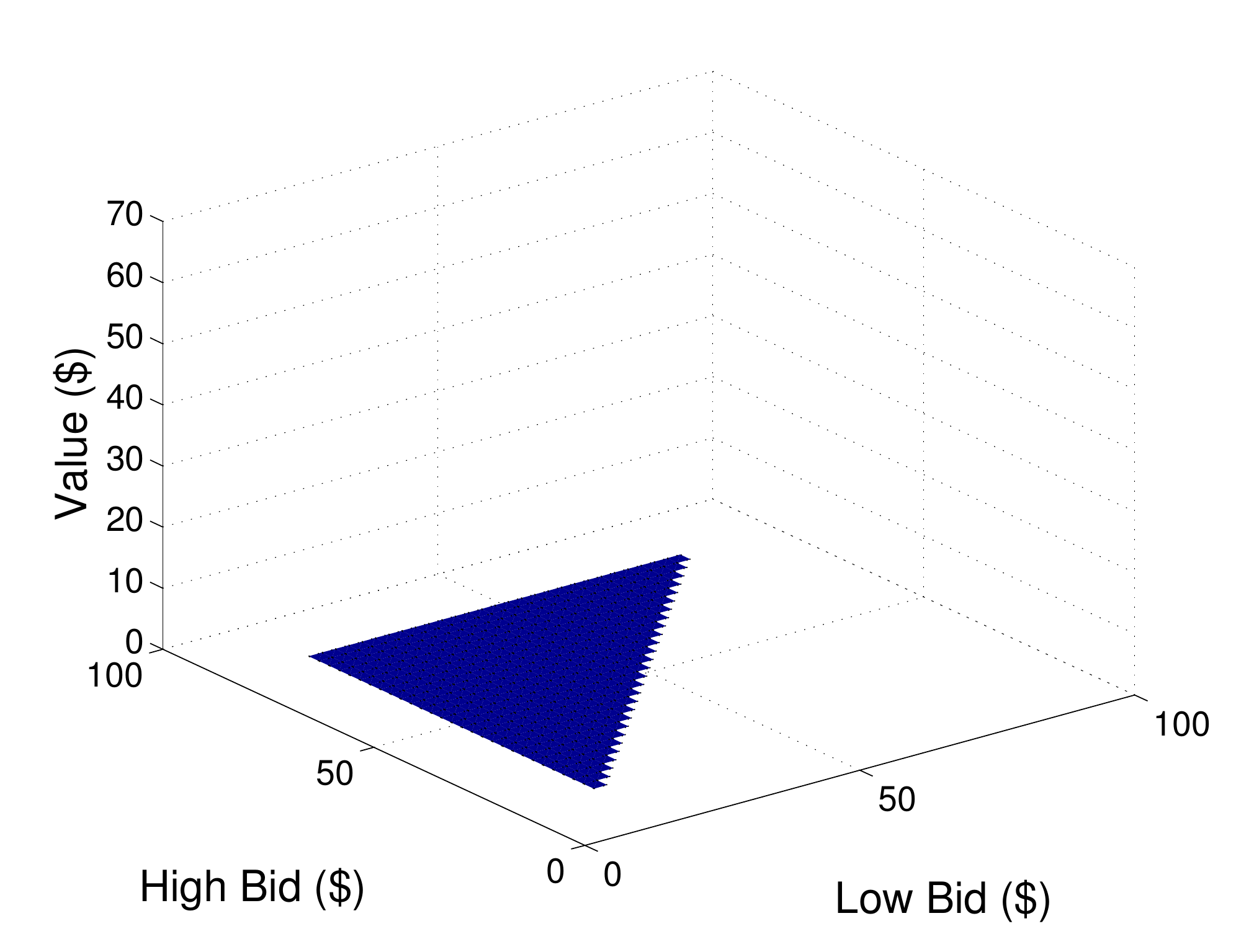}
                \caption{Iteration $n=0$}
        \end{subfigure}%
        ~ 
        \begin{subfigure}[b]{0.3\textwidth}
                \centering
                \includegraphics[width=\textwidth]{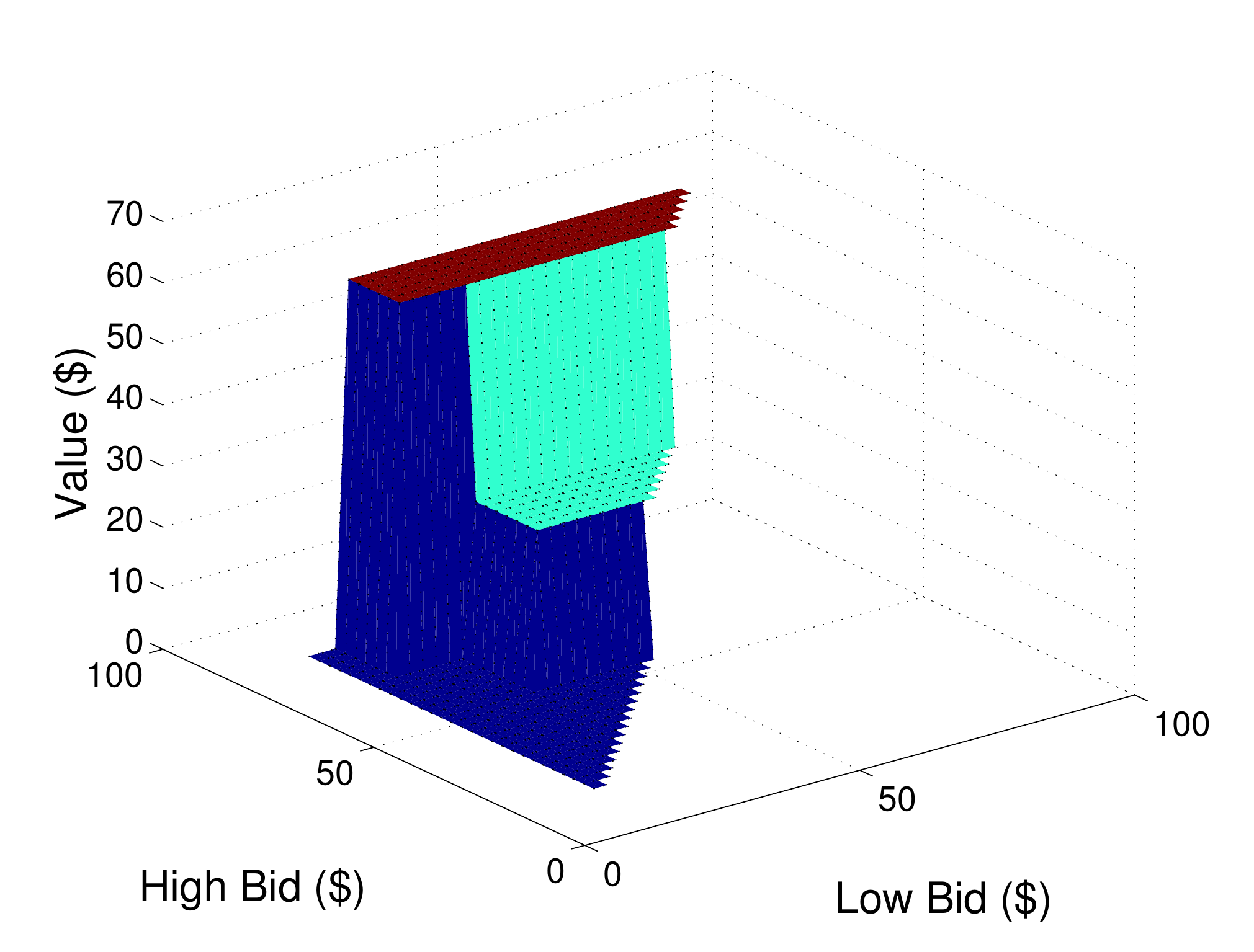}
                \caption{Iteration $n=10$}
        \end{subfigure}
        \begin{subfigure}[b]{0.3\textwidth}
                \centering
                \includegraphics[width=\textwidth]{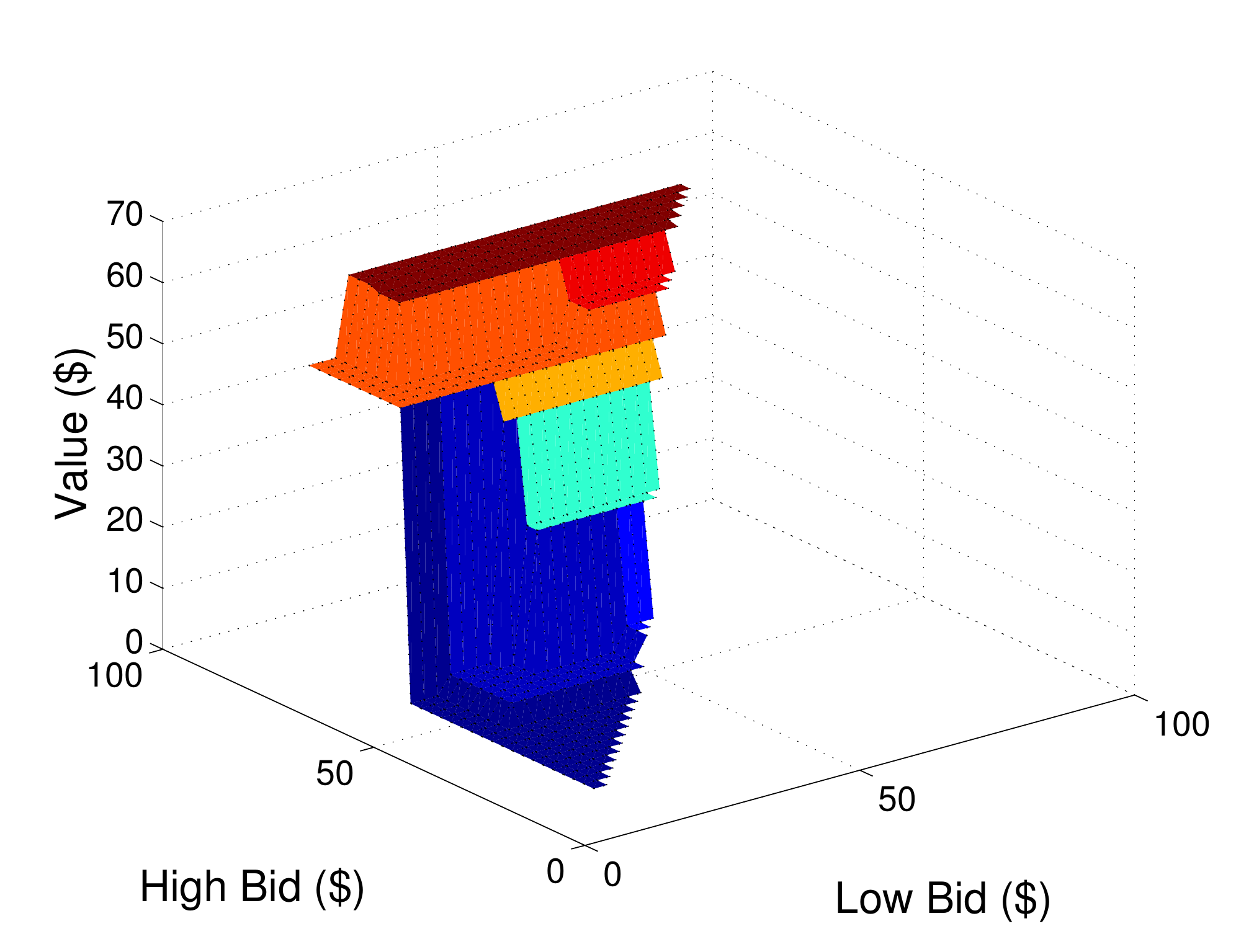}
                \caption{Iteration $n=50$}
        \end{subfigure}
        \caption{Value Function Approximations from Early Iterations of Monotone--ADP}
        \label{fig:earlyiter}
\end{figure}

\begin{table}[h]
\centering
\tiny
\begin{tabular}{@{}llcccccc@{}}\toprule
\multirow{2}{*}{\textbf{Iterations}} & \multirow{2}{*}{\textbf{Algorithm}} & \multicolumn{6}{c}{\textbf{Problem}}\\
& & $A_1$  & $B_1$ & $C_1$ & $D_1$ & $E_1$ & $F_1$\\
\midrule

\multirow{2}{*}{$N = 1000$}	&	M--ADP	&	58.9\%	&	67.8\%	&	73.5\%	&	60.7\%	&	56.8\%	&	45.9\%	\\
	&	AVI	&	53.7\%	&	45.7\%	&	66.6\%	&	23.4\%	&	24.8\%	&	7.8\%	\vspace{0.4em}\\ 
\multirow{2}{*}{$N = 5000$}	&	M--ADP	&	83.7\%	&	82.8\%	&	87.2\%	&	73.8\%	&	66.1\%	&	64.1\%	\\
	&	AVI	&	60.7\%	&	67.3\%	&	82.1\%	&	43.8\%	&	52.6\%	&	49.0\%	\vspace{0.4em}\\ 
\multirow{2}{*}{$N = 9000$}	&	M--ADP	&	89.4\%	&	93.6\%	&	93.3\%	&	76.2\%	&	74.9\%	&	86.6\%	\\
	&	AVI	&	70.2\%	&	75.8\%	&	85.3\%	&	46.7\%	&	58.8\%	&	57.3\%	\vspace{0.4em}\\ 
\multirow{2}{*}{$N = 13000$}	&	M--ADP	&	93.8\%	&	89.9\%	&	96.8\%	&	79.8\%	&	83.7\%	&	88.5\%	\\
	&	AVI	&	76.3\%	&	83.1\%	&	87.8\%	&	49.8\%	&	68.2\%	&	57.8\%	\vspace{0.4em}\\ 
\multirow{2}{*}{$N = 17000$}	&	M--ADP	&	95.8\%	&	96.4\%	&	97.8\%	&	82.7\%	&	86.8\%	&	91.4\%	\\
	&	AVI	&	78.0\%	&	85.1\%	&	90.7\%	&	62.2\%	&	72.0\%	&	70.6\%	\vspace{0.4em}\\ 
\multirow{2}{*}{$N = 21000$}	&	M--ADP	&	95.0\%	&	98.4\%	&	98.1\%	&	90.5\%	&	87.8\%	&	92.7\%	\\
	&	AVI	&	81.1\%	&	87.7\%	&	90.0\%	&	61.0\%	&	73.7\%	&	76.3\%	\vspace{0.4em}\\ 
\multirow{2}{*}{$N = 25000$}	&	M--ADP	&	97.0\%	&	98.5\%	&	98.5\%	&	89.7\%	&	90.4\%	&	94.8\%	\\
	&	AVI	&	86.4\%	&	89.4\%	&	92.1\%	&	60.0\%	&	75.1\%	&	76.0\%	\vspace{0.4em}\\ 		 
\bottomrule
\end{tabular}
\vspace{1em}
\caption{\% Optimal of Policies from the M--ADP and AVI Algorithms for Variation 1}
\label{table:comparison2}
\end{table}

With the effectiveness of Monotone--ADP--Bidding on Variation 1 established, we now examine its \emph{computational} benefits over backward dynamic programming. A comparison of CPU times between Monotone--ADP--Bidding and backward dynamic programming is shown in Figure \ref{fig:cpucompare2}, where the horizontal axis is in log--scale. Once again, we notice the order of magnitude difference in computation time for the exact solution and for the near--optimal ADP solution. Indeed from Table \ref{table:timesave2}, we see that we can achieve very good solutions using an ADP approach while cutting computational resources by over 93\%. In the most drastic case, Problem $F$ (over 150,000 states), a 95\% optimal solution is achieved using only 4\% the amount of computational power.

\begin{figure}[h]
        \centering
        \begin{subfigure}[b]{0.3\textwidth}
                \centering
                \includegraphics[width=\textwidth]{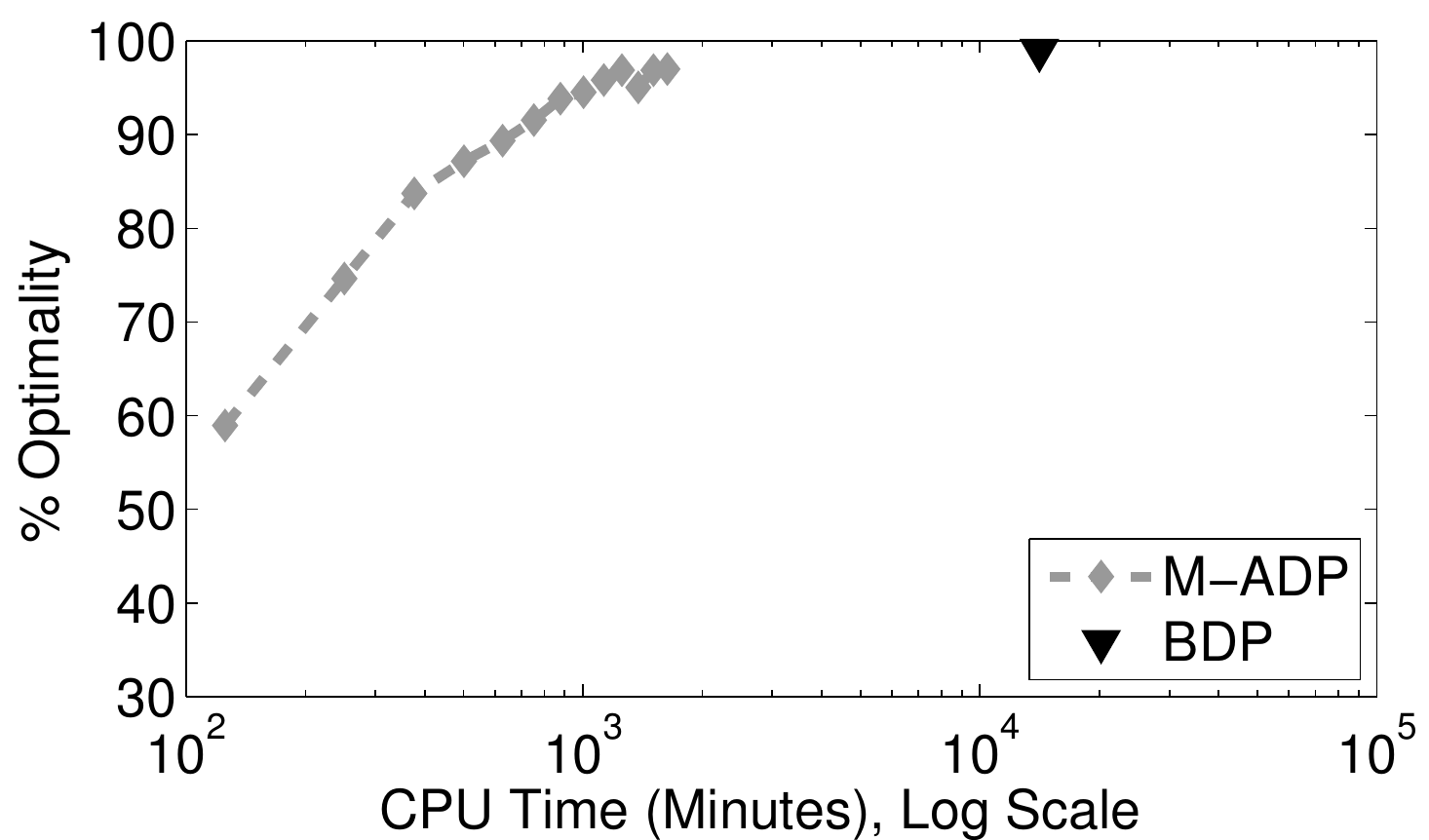}
                \caption{Problem $A_1$}
        \end{subfigure}
        \begin{subfigure}[b]{0.3\textwidth}
                \centering
                \includegraphics[width=\textwidth]{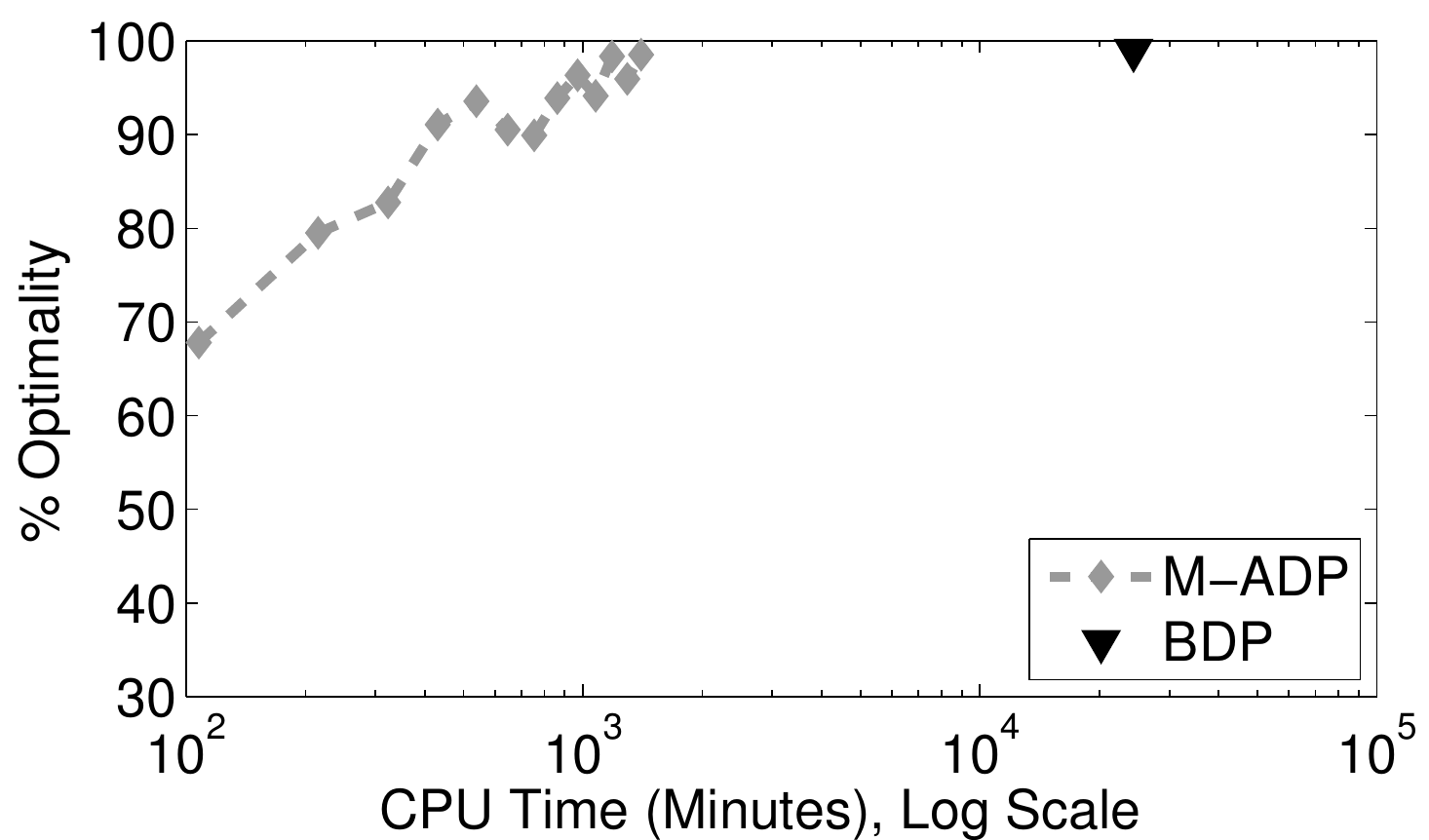}
                \caption{Problem $B_1$}
        \end{subfigure}
        \begin{subfigure}[b]{0.3\textwidth}
                \centering
                \includegraphics[width=\textwidth]{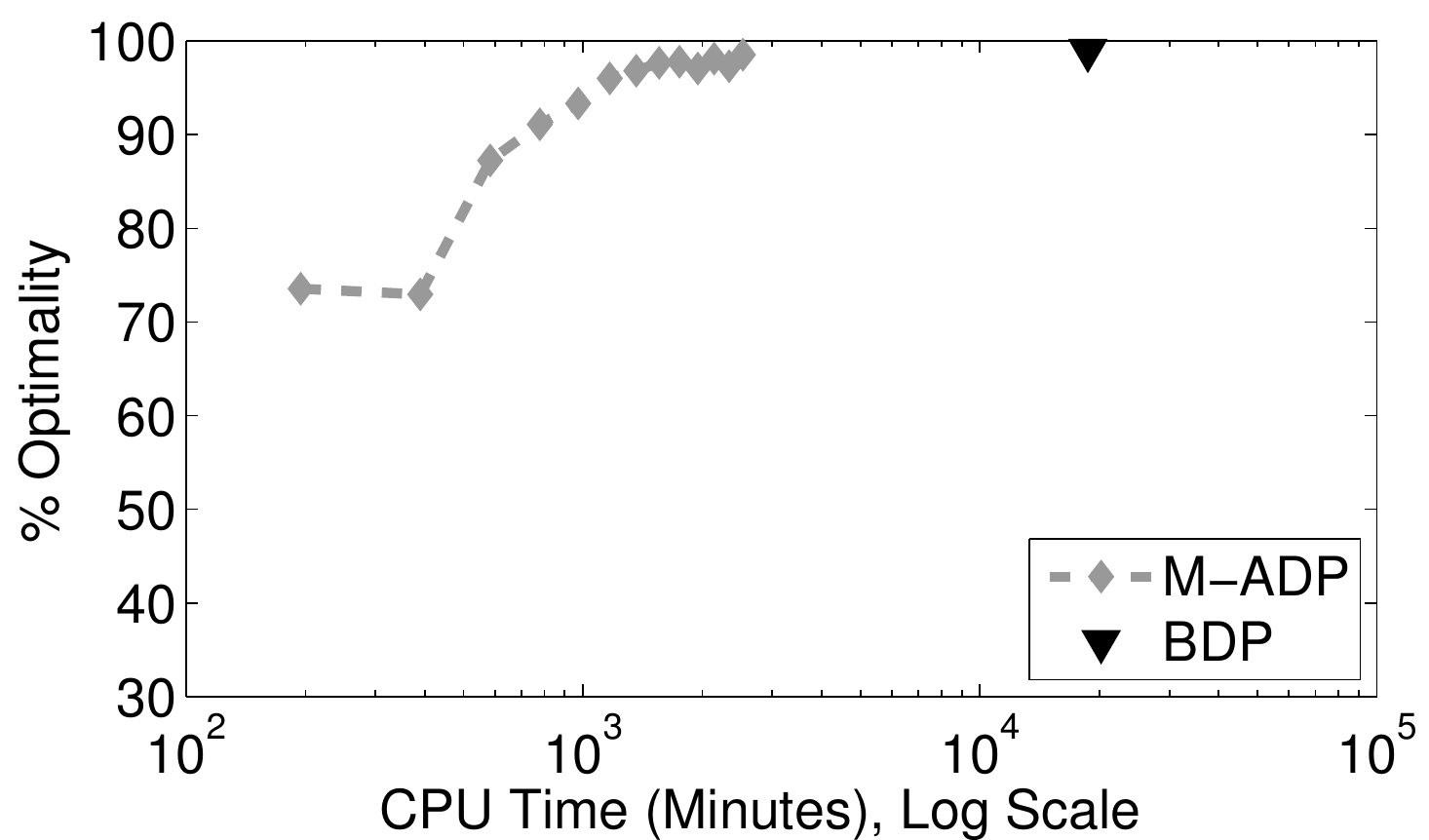}
        \caption{Problem $C_1$}
        \end{subfigure}\\
       
               \begin{subfigure}[b]{0.3\textwidth}
                \centering
                \includegraphics[width=\textwidth]{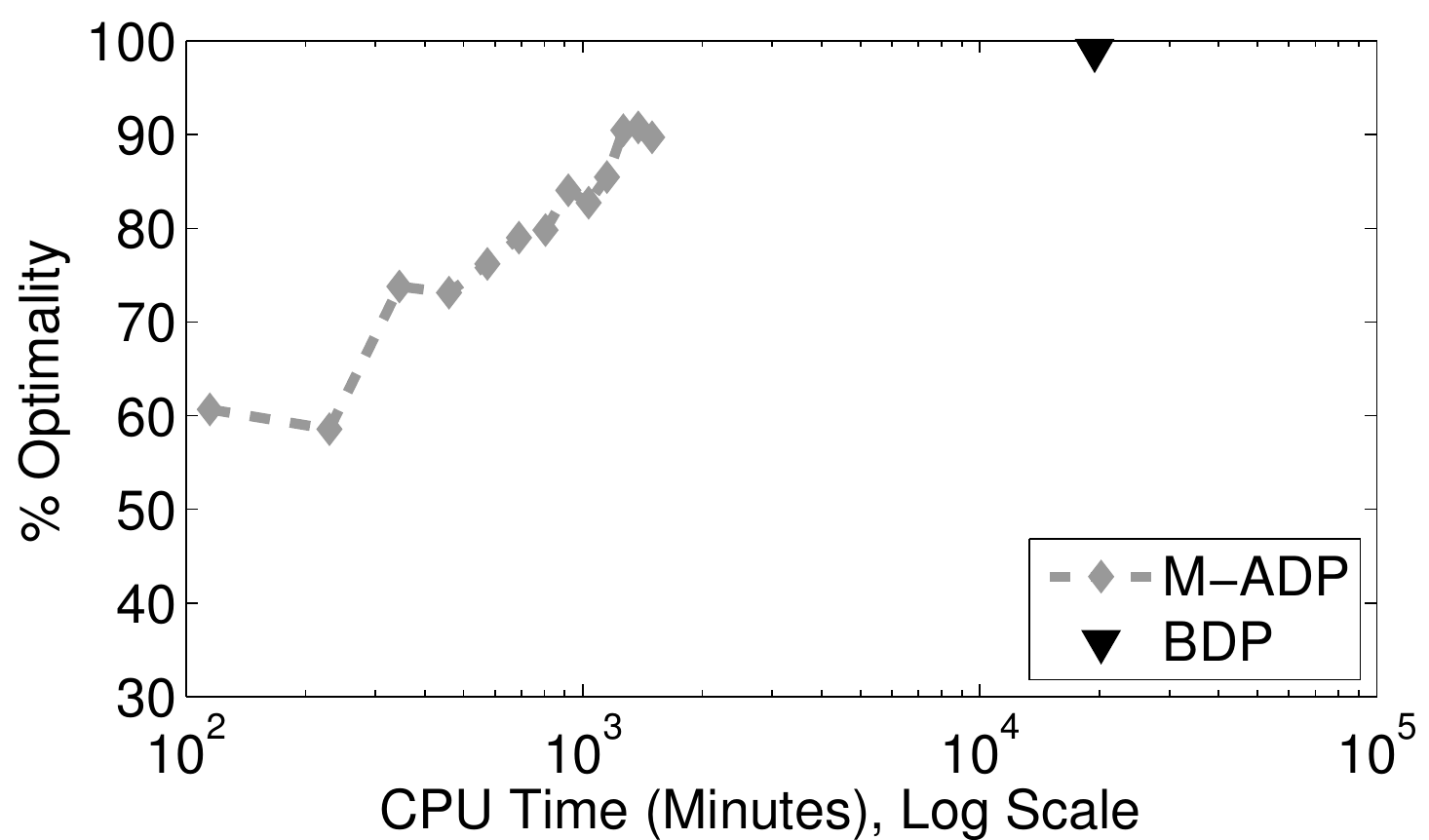}
                \caption{Problem $D_1$}
        \end{subfigure}
        \begin{subfigure}[b]{0.3\textwidth}
                \centering
                \includegraphics[width=\textwidth]{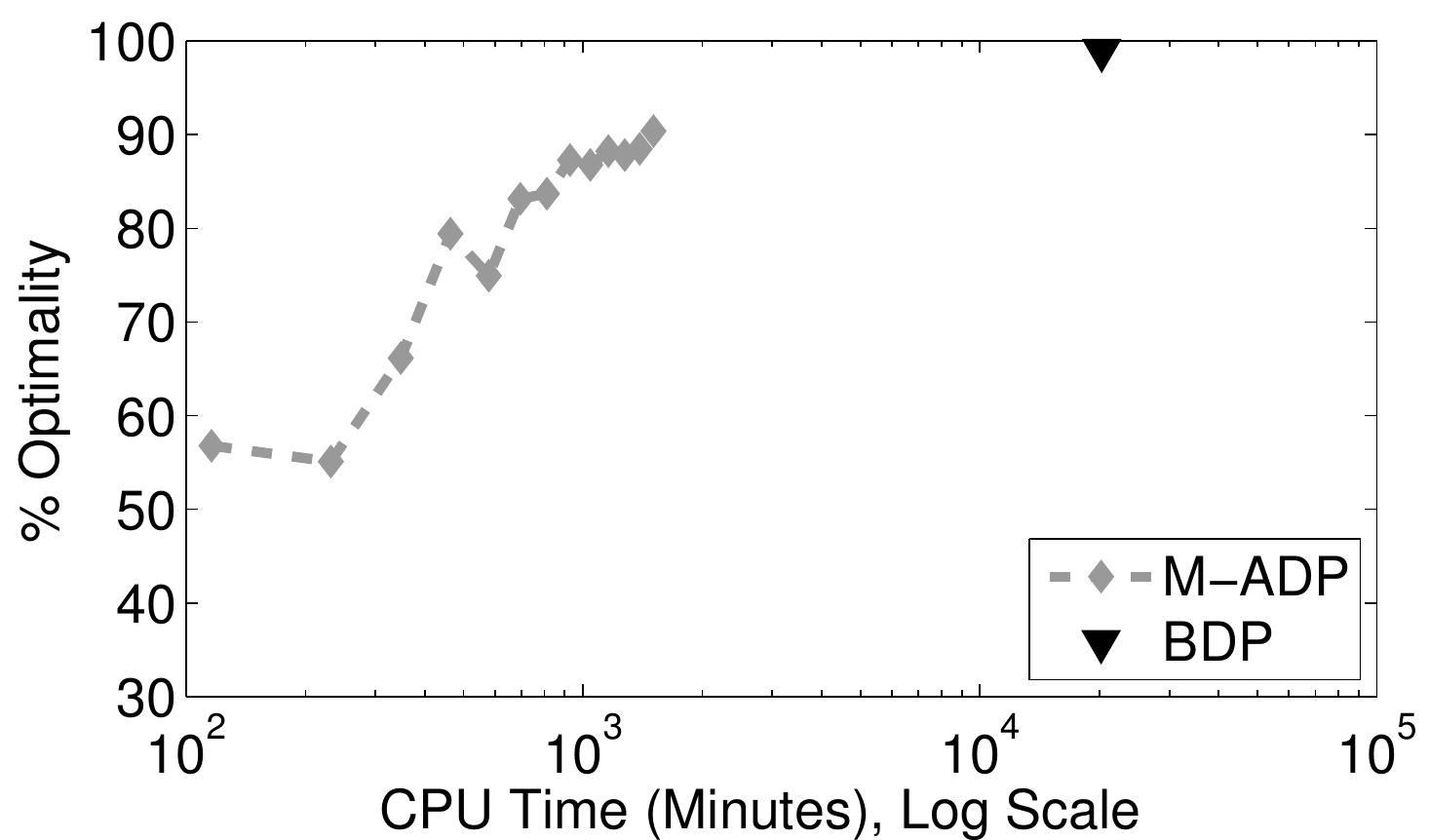}
                \caption{Problem $E_1$}
        \end{subfigure}
        \begin{subfigure}[b]{0.3\textwidth}
                \centering
                \includegraphics[width=\textwidth]{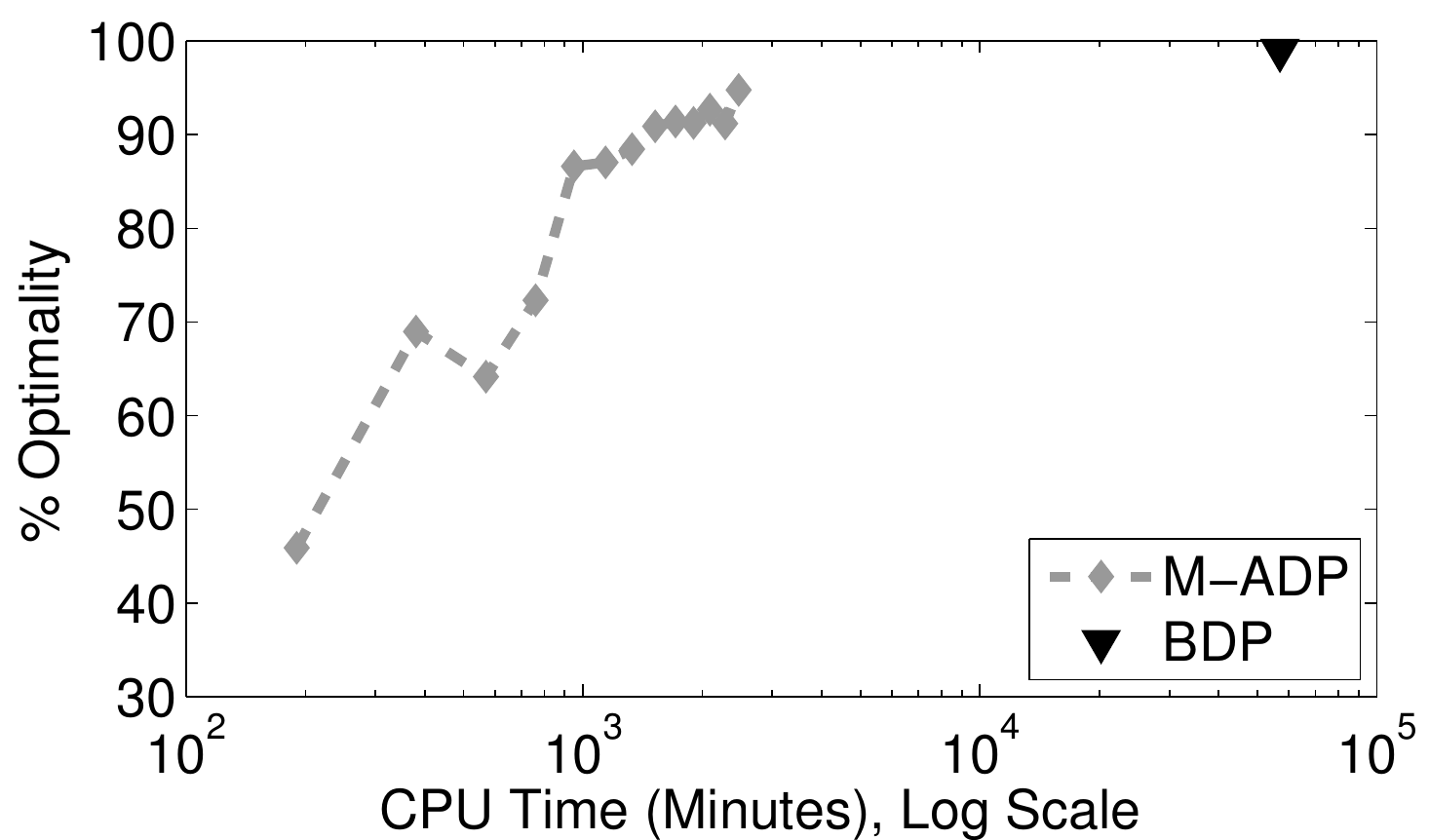}
        \caption{Problem $F_1$}
        \end{subfigure}
        \caption{Computation Times of M--ADP vs. BDP for Variation 1}
        \label{fig:cpucompare2}
\end{figure}
\begin{table}[h]
\centering
\tiny
\begin{tabular}{@{}lcccccc@{}}\toprule
\multirow{2}{*}{\textbf{}} & \multicolumn{6}{c}{\textbf{Problem}}\\
& $A_1$ & $B_1$ & $C_1$ & $D_1$ & $E_1$ & $F_1$\\
\midrule
BDP CPU Time (Minutes) &	14{,}112	&	24{,}392	&	18{,}720	&	19{,}448	& 20{,}256	&	56{,}968 \vspace{0.4em} \\

M--ADP CPU Time (Minutes)/\% Optimality  &	1{,}003/95\% &	1{,}077/94\%	&	1{,}167/96\%	&	 1{,}264/90\%	&	1{,}506/90\%	&	2{,}470/95\% \vspace{0.4em} \\

\textbf{\% Time Savings}  &	\textbf{93\%} &	\textbf{96\%}	&	\textbf{94\%}	&	 \textbf{94\%}	&	\textbf{93\%}	&	\textbf{96\%}\\			
\bottomrule
\end{tabular}
\vspace{1em}
\caption{\% Time Savings from BDP when using M--ADP Near--Optimal Solution}
\label{table:timesave2}
\end{table}

\subsection{Variation 2} Briefly, we also consider a problem with a more complex price process: a Markov Regime--Switching model with two regimes, denoted by the process $X_t$. We represent the normal regime as $X_t = 0$ and the spike regime as $X_t=1$. Let $S(t)$ be a deterministic seasonal component, $\epsilon_t$ be discrete, independent and identically distributed (i.i.d.)\ random variables representing noise in the normal regime, and $\epsilon^s_t$ be discrete, i.i.d.\ random variables representing noise in the spike regime. The price process can be written as:
\begin{equation*}
P_t = S(t) + \indicate{X_t=0} \cdot \epsilon_t + \indicate{X_t=1} \cdot \epsilon^s_t.
\end{equation*}
Also, we define the transition probabilities of the (time--inhomogenous) Markov chain $X_t$:
\begin{equation*}
p_{i,j}(t) = \mathbf{P}(X_{t+1}=j\,|\,X_t=i).
\end{equation*}
Because $X_t$ only takes two states, let $p(t) = p_{0,1}(t)$ (the probability, at time $t$, of moving from the normal regime into the spike regime) and $q(t) = p_{1,0}(t)$ (the probability, at time $t$, of returning to the normal regime). 
The state variable for this problem is five--dimensional: $S_t = (R_t, L_t, b_{t-1}^-, b_{t-1}^+, X_t)$. In order to generate a small library of test problems, we considered two versions of the seasonal component:
\[
S_i(t) = 15 \, f_i(2\pi t/12)+50,
\]
for $i \in \{1,2\}$ and $f_1(x)=\sin(x)$ and $f_2(x)=\cos(x)$.
We roughly model the fact that price spikes tend to occur more frequently when demand is high. Since demand is often modeled using sinusoidal functions, we use the following for $p(t)$ (the probability of moving from the normal regime to the spike regime) when the seasonal component is $S_i(t)$:
\begin{equation*}
p_i(t) = \alpha_p \, \big[f_i(2\pi t/12)+1\big]/2,
\end{equation*}
for some parameter $\alpha_p \le 1$, representing the maximum probability of moving to the spike regime: $p_t(t) \in [0,\alpha_p]$. In these numerical results, $q(t)$, the probability of returning to the normal regime, is always modeled as a constant $\alpha_q$. Moreover, both $\epsilon_t$ and $\epsilon_t^s$ have support $\{-10, -9, -8, \ldots, +39, +40\}$ and are distributed according to the discrete pseudonormal distribution (with parameters $(\mu_X,\sigma_X) = (0,7)$ and $(\mu_X,\sigma_X)=(15,20)$, respectively). The skewed support allows us to model the preponderance of upward spikes in electricity spot prices. 
The remainder of the parameters vary across the test problems and are summarized in Table \ref{table:problems2} below.
\begin{table}[h]
\centering
\tiny
\begin{tabular}{@{}cccccccccc@{}}\toprule
\multirow{1}{*}{\textbf{Problem}} & \multirow{1}{*}{$T$} & \multirow{1}{*}{$\Rmax$} & \multirow{1}{*}{$\Lmax$} & \multirow{1}{*}{$\beta(l)$} & \multirow{1}{*}{\textbf{Trend}} & \multirow{1}{*}{$\alpha_p$} & \multirow{1}{*}{$\alpha_q$} & \multirow{1}{*}{\textbf{Cardinality of }$\mathcal S$}  \\
\midrule
$A_2$ & $24$ & $4$ & $6$ & $(l/6)^\frac{1}{6}$ & $S_2(t)$ & 0.9 & 0.5 & 22{,}320 \\
$B_2$ & $24$ & $4$ & $8$ & $(l/8)^\frac{1}{6}$ & $S_1(t)$ & 0.8 & 0.7 & 29{,}760 \\
$C_2$ & $12$ & $8$ & $6$ & $(l/6)^\frac{1}{6}$ & $S_2(t)$ & 0.9 & 0.5 & 44{,}640 \\
$D_2$ & $12$ & $6$ & $8$ & $(l/8)^\frac{1}{6}$ & $S_2(t)$ & 0.8 & 0.7 & 44{,}640 \\
$E_2$ & $12$ & $8$ & $10$ & $(l/10)^\frac{1}{6}$ & $S_1(t)$ & 0.9 & 0.5 & 74{,}400 \\
$F_2$ & $12$ & $10$ & $8$ & $(l/8)^\frac{1}{6}$ & $S_2(t)$ & 0.8 & 0.7 & 74{,}400 \\
\bottomrule
\end{tabular}
\vspace{1em}
\caption{Parameter Choices for Variation 2 Benchmark Problem}
\label{table:problems2}
\end{table}

We ran both Monotone--ADP--Bidding and traditional approximate value iteration for 10,000 iterations on each of the test problems. The results of the benchmarking are summarized in Table \ref{table:benchmark} below (for brevity, we omit plots of the approximate value function and computation times and instead state that they are very comparable to those of Variation 1). It is clear that, once again, Monotone--ADP--Bidding provides significantly better solutions than approximate value iteration, particularly in the early iterations. 
\vspace{1em}
\begin{table}[h]
\centering
\tiny
\begin{tabular}{@{}lllllllll@{}}\toprule
\multirow{2}{*}{\textbf{Iterations}} & \multirow{2}{*}{\textbf{Algorithm}} & \multicolumn{6}{c}{\textbf{Problem}}\\
& & $A_2$  & $B_2$ & $C_2$ & $D_2$ & $E_2$ & $F_2$\\
\midrule

\multirow{2}{*}{$N = 2000$}	&	M--ADP	&	82.4\%	&	82.6\%	&	94.6\%	&	93.6\%	&	82.8\%	&	82.8\%	\\
	&	AVI	&	31.3\%	&	32.2\%	&	43.6\%	&	60.9\%	&	33.0\%	&	46.2\%	\vspace{0.4em}\\ 
\multirow{2}{*}{$N = 4000$}	&	M--ADP	&	86.7\%	&	83.7\%	&	96.1\%	&	99.1\%	&	93.2\%	&	90.0\%	\\
	&	AVI	& 53.4\%	&	46.1\%	&	62.1\%	&	76.9\%	&	54.0\%	&	62.4\%	\vspace{0.4em}\\ 
\multirow{2}{*}{$N = 6000$}	&	M--ADP	&	93.6\%	&	81.0\%	&	88.3\%	&	98.2\%	&	90.2\%	&	90.5\%	\\
	&	AVI	&	64.8\%	&	51.0\%	&	69.3\%	&	82.6\%	&	63.5\%	&	76.1\%	\vspace{0.4em}\\ 
\multirow{2}{*}{$N = 8000$}	&	M--ADP	&	95.3\%	&	86.8\%	&	92.2\%	&	93.8\%	&	93.4\%	&	88.8\%	\\
	&	AVI	&	77.4\%	&	68.0\%	&	67.5\%	&	79.6\%	&	77.0\%	&	77.0\%	\vspace{0.4em}\\ 
\multirow{2}{*}{$N =10000$}	&	M--ADP	&	94.4\%	&	87.8\%	&	95.8\%	&	96.3\%	&	95.2\%	&	98.2\%	\\
	&	AVI	&	84.1\%	&	58.7\%	&	77.9\%	&	71.8\%	&	84.3\%	&	60.8\%	\vspace{0.4em}\\ 
	 
\bottomrule
\end{tabular}
\vspace{1em}
\caption{\% Optimal of Policies from the M--ADP and AVI Algorithms for Variation 2}
\label{table:benchmark}
\end{table}

\section{Case Study: Training and Testing an ADP Policy Using Real NYISO Data}
\label{sec:casestudy}
In this section, we use the distribution--free, post--decision state version of Monotone--ADP to produce bidding policies for the New York City zone of the NYISO, with the goal of demonstrating the idea of training using only historical data as ``sample paths.'' The case study uses two full years of 5--minute real--time price data obtained from the NYISO, for the recent years of 2011 and 2012. See Figure \ref{fig:spotprices} below for a visual comparison.
\begin{figure}[h]
        \centering
        \begin{subfigure}[b]{0.45\textwidth}
                \centering
                \includegraphics[width=\textwidth]{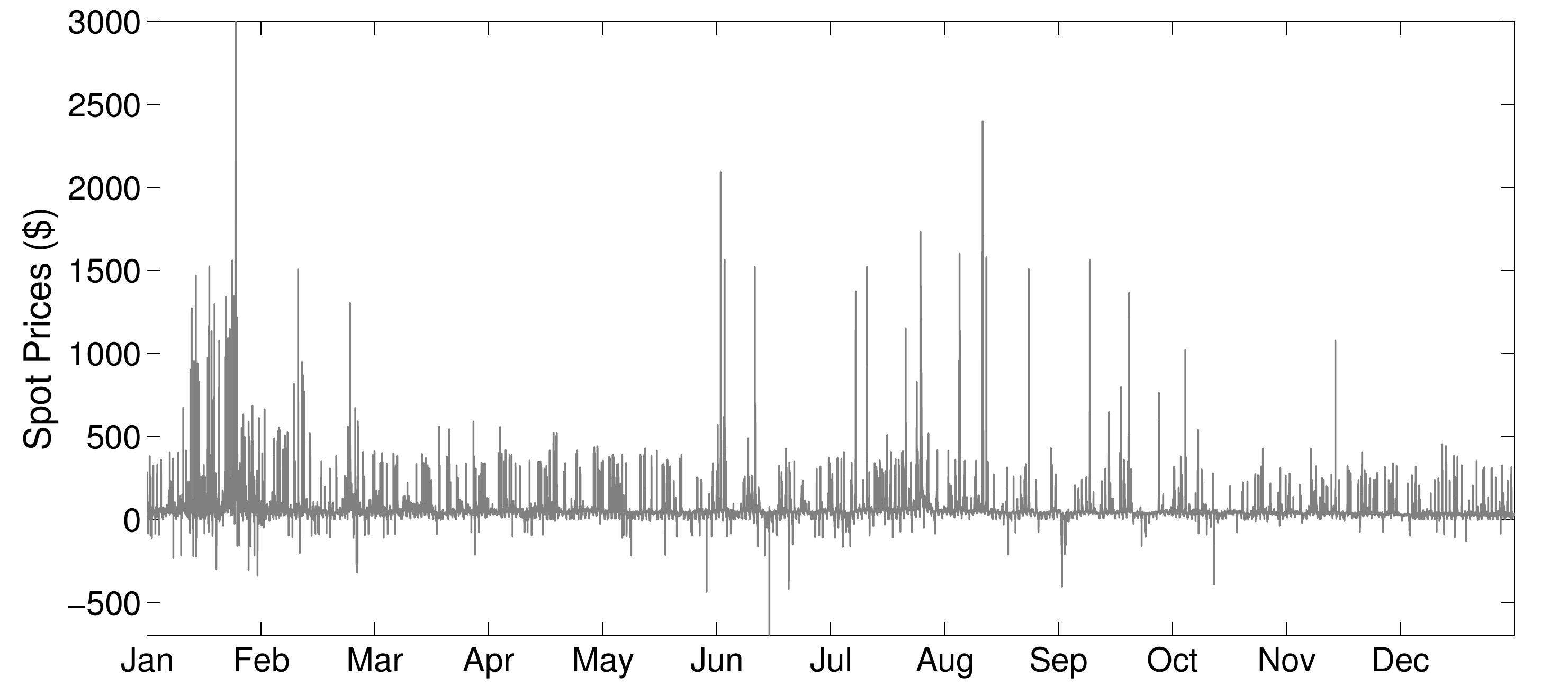}
                \caption{2011 Real--Time Prices}\label{subfig:2011}
        \label{subfig:optimalbdp}
        \end{subfigure}
        \begin{subfigure}[b]{0.45\textwidth}
                \centering
                \includegraphics[width=\textwidth]{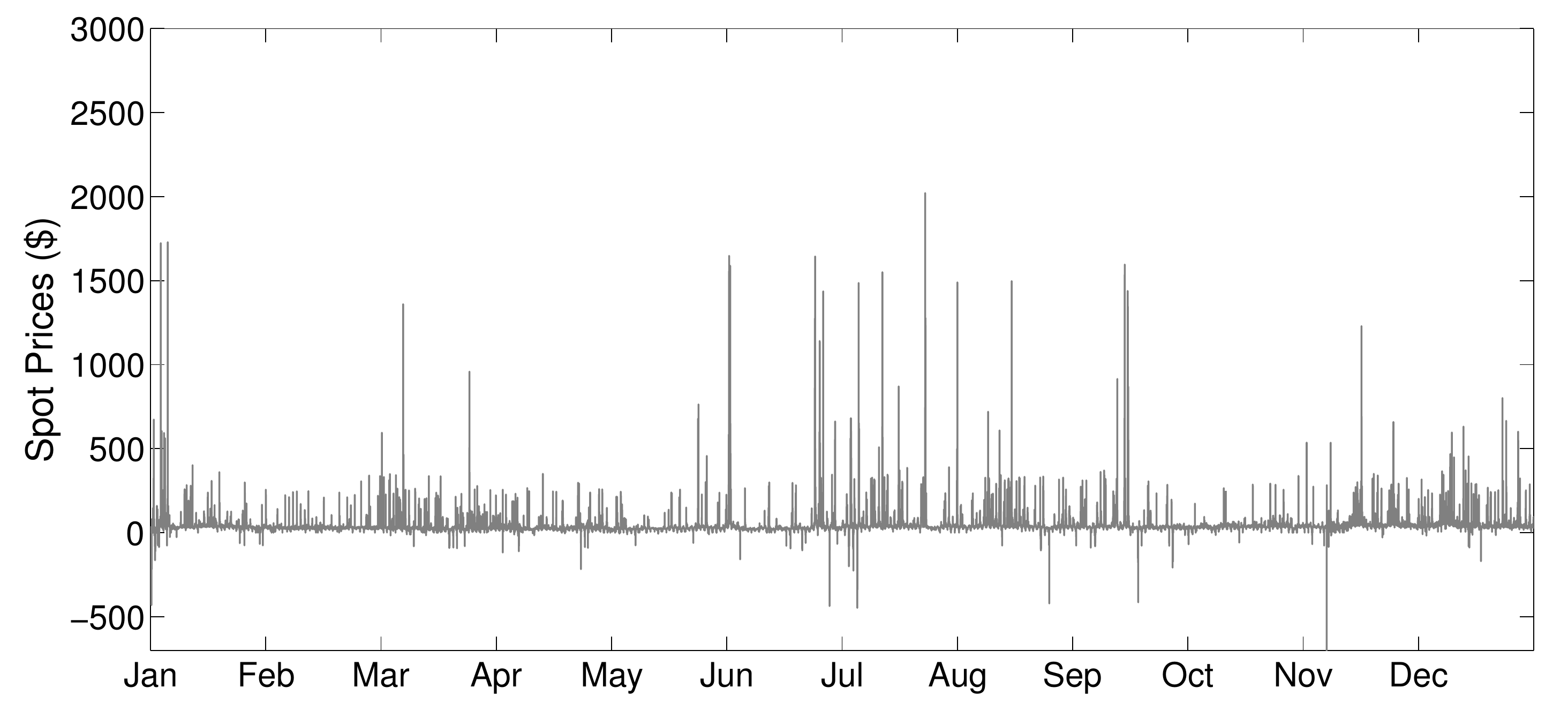}
                \caption{2012 Real--Time Prices}\label{subfig:2012}
        \end{subfigure}
        \caption{NYISO Real--Time, 5--Minute Prices Used for Training and Testing of an ADP Policy}
        \label{fig:spotprices}
\end{figure}
The concern with the historical prices is that we must satisfy Assumption \ref{ass:six}; i.e., we must assume that the unknown stochastic process $P_t$ is bounded. This is not an unreasonable assumption if we allow the bound to be high, say \$3{,}000, which is consistent with the prices in our data set. We remark again that, in order to satisfy Assumption \ref{ass:seven}, we use:
\begin{equation*}
    	\hat{v}_{t}^{b,n}(S_{t}^{b,n}) = \max_{b_{t+1} \in \mathcal B} \bigl [C_{t+1,t+3}(S^n_{t+1},b_{t+1})+\widebar{V}_{t+1}^{b,n-1}(S_{t+1}^{n},b_{t+1}) \bigr ],
\end{equation*}
in Step 2a of Figure \ref{fig:algorithm2}, where the transition from $S_{t}^{b,n}$ to $S_{t+1}^n$ is accomplished using a single sample from historical data. The remaining assumptions are satisfied for the same reasons as before. 

\subsection{ADP Policies} There are many sensible ways to choose training data for a specific operating time horizon. In this paper, we consider two commonsense methods: 1) using historical samples from the same month of the previous year to train a policy for the current month (``ADP Policy 1''), and 2) using samples from the previous month to train a policy for the current month (``ADP Policy 2''). The rationale for the first method is that the price process may behave similarly in the same month across years (though factors like weather, natural gas prices, etc, should be considered before assuming that such a statement is true), and the rationale for the second method is to simply use the most recent data available. We train an hourly bidding policy that has a horizon of one day ($T+1=24$) and the post--decision state variable for this case study is five--dimensional:
\[
S_t^b = (R_t, b_{t-1}^-, b_{t-1}^+, b_t^-, b_t^+) \in \mathcal S^b,
\]
where the bids are linearly discretized between $\bmin=0$ and $\bmax=150$ into 15 grid points in each dimension. Although it is difficult to discern from Figure \ref{fig:spotprices}, 98.2\% of the prices in our data set are below \$150. To have a lower dimensional state variable for more reasonable runtimes, we elect to assume $P_t^S = \{\}$ and $\beta(l)=1$ (it is also not typical for a battery manufacturer to provide an expression for $\beta(l)$; an accurate model for $\beta(l)$ would require estimation from empirical data, which is outside the scope of this paper). Conversations with industry colleagues suggested that, for this application, it is reasonable to model a 1 MW, 6 MWh battery. Since $M=12$, we choose $\Rmax = 72$, giving us a state space of size $|\mathcal S^b| = 3.6$ million states, much larger than that of the benchmark problems in the previous section. The remaining details are summarized in the list below.
\begin{enumerate}
\item Due to the fact that the characteristics of the spot prices can be very different on weekends (see e.g., \cite{Coulon2012}), we considered weekdays only. In a true application, it would be important to train a separate policy for weekends.
\item In order to have a larger data set for our simulations, our main assumption is that spot prices of a particular hour are identically distributed across weekdays of the same month, allowing us to train and test on a large set of sample paths.
\item We train a daily value function for each month of the year. In essence, we combine the data for the weekdays of each month to produce a policy that is valid for any given weekday of the same month.
\item As before, we set the undersupply penalty parameter $K$ to 1.
\end{enumerate}
The real--time prices from 2011 are used as training data and the prices from 2012 are used simultaneously as training and test data: for each month of 2012, we generate two policies, one trained using data from the same month in 2011 and the other trained using data from the previous month. The revenues generated by these policies are given in Table \ref{table:realdata}, where the evaluation method from Section \ref{sec:benchmarking} is used. The results correspond to running the algorithm for $N=100{,}000$ iterations. Note that because the post--decision version does not compute an expectation, each iteration is significantly faster than that of the pre--decision version, but in general, requires more iterations. The results show that ADP Policy 1 (training on data from the same month of the previous year) narrowly outperforms ADP Policy 2 (training on data from the previous month) in most cases. Although our MDP only optimizes for revenue in expectation, we nevertheless report that the (0.05--quantile, 0.95--quantile) of daily revenue for ADP Policy 1 is (\$60.37, \$474.24) with a median of \$174.55. For ADP Policy 2, we have a (0.05--quantile, 0.95--quantile) of daily revenue of (\$44.13, \$453.30) with a median of \$154.12. These results confirm that the policies consistently generate revenue.
\begin{table}[h]
\centering
\scriptsize
\begin{tabular}{@{}cccccc@{}}\toprule
\multirow{2}{*}{\textbf{Test Dataset}} & \multicolumn{2}{c}{\textbf{ADP Policy 1}} &\multicolumn{2}{c}{\textbf{ADP Policy 2}}\\
& Training Dataset & Revenue ($\$$) & Training Dataset & Revenue ($\$$)\\
\midrule
January--12 & January--11 & 6{,}539.00 & December--11 & 7{,}857.69\\
February--12 & February--11 & 1{,}966.02 & January--12 & 2{,}061.99 \\			 
March--12 & March--11 & 5{,}810.70 & February--12 & 5{,}511.07\\			 
April--12 & April--11 & 4{,}147.60 & March--12 & 4{,}223.85\\			 
May--12 & May--11 & 9{,}030.54& April--12 & 8{,}296.17\\			 
June--12 & June--11 & 11{,}465.39 & May--12 & 10{,}934.07\\			 
July--12 & July--11 & 11{,}323.50 & June--12 & 9{,}042.77 \\			 
August--12 & August--11 & 6{,}277.31 & July--12 & 6{,}206.56 \\			 
September--12 & September--11 & 5{,}754.93& August--12 & 5{,}561.24\\			 
October--12 & October--11 & 3{,}693.01 & September--12 & 3{,}623.33 \\			 			 
November--12 & November--11 & 7{,}228.85 & October--12 & 2{,}768.00 \\			 
December--12 & December--11 & 3{,}275.84 & November--12 & 3{,}160.28 \\			 
\midrule			 
\multicolumn{1}{c}{\textbf{Yearly Revenue:}} & & \textbf{76{,}512.68}& & \textbf{69{,}247.02} \\
\bottomrule
\end{tabular}
\vspace{1em}
\caption{Performance of Monotone--ADP--Bidding Policy Trained and Tested on Real Data}
\label{table:realdata}
\end{table}

\subsection{Comparison to Standard Trading Policies} This subsection compares the ADP policies to several other rule--based policies developed from both the literature and discussions with industry. Due to the existence of (and lack of access to) proprietary trading strategies, we cannot claim to be comparing against the best; however, we do believe that the basic strategies surveyed in this paper are involved in a significant portion of high performing trading/bidding policies. Trading policies $A$ and $B$ are based on determining peak and off--peak hours using historical price data and inspired by strategies considered in the paper \cite{Walawalkar2007} and the technical report \cite{Byrne2012}, but adapted to our bidding setting. Trading policy $C$ uses the idea of bidding at certain quantiles and attributed to ideas given to us by our industry colleagues. Policies subscripted by $1$ (i.e., $A_1$, $B_1$, and $C_1$) use historical price data from the same month of the previous year, and policies subscripted by $2$ use data from the previous month.
\begin{description}[labelindent=\parindent,leftmargin=\parindent]
\item[Policies $A_1$ and $A_2$:] Taking advantage of the trend that lower prices occur at night, we split the operating day hours into two intervals $1$ to $h^*$ and $h^*+1$ to 24, with $h^* > 6$. The intervals are then sorted using average historical prices. If hour $h$ of the first interval has one of the six lowest prices, then it is designated a \emph{buy} interval. Similarly, if hour $h$ of the second interval has one of the six highest prices, then it is a \emph{sell} interval. All other hours are \emph{idle} intervals. When placing a bid $b_t$, we consider the hour $h$ corresponding to $(t+1,t+2]$: if hour $h$ is a buy interval, we choose $b_t = (\bmax,\bmax)$; if hour $h$ is a sell interval, we choose $b_t = (\bmin, \bmin)$; and if hour $h$ is an idle interval, we choose $b_t=(\bmin,\bmax)$. This policy essentially guarantees (with the possible exception of spike situations where prices exceed $\bmax$) that we fill up the battery in the interval from 1 to $h^*$ and then empty it in the interval from $h^*+1$ to 24. With some tuning, we found that $h^*=12$ provided the highest valued policies.
\item[Policies $B_1$ and $B_2$:] The second set of policies are again based on the idea of pairing periods of low prices with periods of high prices, but with more flexibility than policies $A_1$ and $A_2$. Instead, we sort all hours of a given day using average historical prices and designate the $k^*$ lowest priced hours as \emph{buy} intervals, corresponding to $b_t = (\bmax,\bmax)$ and the $k^*$ highest priced hours as \emph{sell} intervals, corresponding to $b_t = (\bmin, \bmin)$. The remaining hours are \emph{idle} intervals, meaning we set $b_t = (\bmin, \bmax)$. Again using historical prices, at time $t$, we estimate the level of resource $\hat{R}_{t+1}$ at the beginning of the next hour as the average of the outcomes of $R_{t+1}$ over historical sample paths. When encountering a buy interval with $\hat{R}_{t+1} > 60$ (nearly full battery) or a sell interval with  $\hat{R}_{t+1} < 12$ (nearly empty battery), we place the idle bid instead. Finally, if we detect that we have more energy in storage than can be sold in the time left until the end of horizon, we place sell bids thereafter. We report results for the tuned parameter $k^* = 10$.
\item[Policies $C_1$ and $C_2$:] Let $\alpha < 0.5$ be the parameter to our final set of policies. For each hour $h$, we compute the empirical quantiles of the historical prices at $\alpha$ and $1-\alpha$, denoted $q_\alpha$ and $q_{(1-\alpha)}$, respectively (note the suppressed dependence on $h$). When bidding at time $t$, we again estimate $\hat{R}_{t+1}$ using historical data. For times when the battery is estimated to be nearly full, we place the bid $b_t = (\bmin,q_{(1-\alpha)})$. Similarly, if the battery is nearly empty, we place the bid $b_t = (q_\alpha, \bmax)$. For anything inbetween, we simply bid $b_t = (q_\alpha, q_{(1-\alpha)})$, with the hope of consistently buying low and selling high. We implement the same logic for when we hold more energy than the maximum that can be sold in the time remaining and initiate a sell--off. In the numerical results below, we use $\alpha = 0.1$. Smaller values of $\alpha$ correspond to the notion of reserving the battery for only the highest valued trades.   
\end{description}
The results of running policies $A_i$, $B_i$, and $C_i$ are given in Table \ref{table:industry}.
\begin{table}[h]
\centering
\scriptsize
\begin{tabular}{@{}cccccccc@{}}\toprule
\multirow{2}{*}{\textbf{Test Dataset}} & \multicolumn{6}{c}{\textbf{Revenue (\$)}} \\
& \textbf{Policy $A_1$} & \textbf{Policy $A_2$} & \textbf{Policy $B_1$} & \textbf{Policy $B_2$} & \textbf{Policy $C_1$} & \textbf{Policy $C_2$}\\
\midrule
January--12 & 3,078.56 	&	3,539.68 	&	3,182.07 	&	3,445.84 	&	1,901.89 	&	8,461.97 	\\
February--12 & 707.02 	&	404.68 	&	397.38 	&	(349.51)	&	1,503.52 	&	1,487.59 	\\
March--12 & 2,380.97 	&	2,343.57 	&	1,837.57 	&	2,154.49 	&	4,744.29 	&	6,214.73 	\\
April--12 &	 702.84 	&	1,247.13 	&	205.12 	&	1,078.62 	&	3,403.25 	&	3,412.50 	\\
May--12 & 5,855.13 	&	3,564.74 	&	4,888.52 	&	3,797.41 	&	6,944.26 	&	5,013.73 	\\
June--12 & 3,449.75 	&	4,742.00 	&	4,511.81 	&	3,427.11 	&	7,329.25 	&	7,618.00 	\\			 
July--12 & 6,871.67 	&	4,488.28 	&	6,940.68 	&	6,781.36 	&	8,003.43 	&	(7,066.45)	\\
August--12 & 1,278.66 	&	1,482.63 	&	1,824.57 	&	1,273.28 	&	4,724.14 	&	4,908.08 	\\
September--12 & 1,438.39 	&	1,638.63 	&	315.94 	&	1,665.22 	&	3,868.75 	&	4,336.50 	\\
October--12 & 701.91 	&	751.93 	&	633.58 	&	321.80 	&	2,879.64 	&	2,750.99 	\\
November--12 & 1,585.50 	&	1,938.98 	&	1,354.96 	&	1,359.01 	&	4,438.00 	&	(1,270.90)	\\
December--12 & 1,240.97 	&	1,012.26 	&	424.56 	&	431.05 	&	2,703.46 	&	2,445.46 	\\
\midrule			 
\textbf{Yearly Revenue:} & \textbf{29,291.36} 	&	\textbf{27,154.52} 	&	\textbf{26,516.76} 	&	\textbf{25,385.68} 	&	\textbf{52,443.88} 	&	\textbf{38,312.20} 	\\
\bottomrule
\end{tabular}
\vspace{1em}
\caption{Performance of Standard Trading Policies Trained and Tested on Real Data}
\label{table:industry}
\end{table}
Given that they were afforded more nuanced actions than simply buy and sell, perhaps not surprisingly, Policies $C_i$ outperformed the rest. However, we also notice that, unlike the other policies, Policy $C_2$ generated large negative revenues in July--12 and November--12. Comparing Policy $C_1$ against ADP Policy 1 and comparing Policy $C_2$ against ADP Policy 2, we see the revenues generated are still a disappointing $68.5\%$ and $55.3\%$, respectively, of the ADP revenues, suggesting that it is difficult, even after tuning, for simple rule--based heuristics to perform at the level of a well--trained ADP policy that considers downstream value. Moreover, the months of July--12 and November--12 (during which Policy $C_2$ posted negative revenues) suggest that the ADP strategy is more robust to the differences in training data when compared to Policy $C_i$. A possible driving force behind Policy $C_2$'s failure to generate revenue during these months is that the training data from June--12 and October--12 has largely differing characteristics (e.g., many spikes) from the testing data in July--12 and November--12 (see Figure \ref{fig:spotprices}).
\subsection{Additional Insights}
\label{sec:additionalinsights}
Applying Monotone--ADP--Bidding to real data from the NYISO has given us several insights into the topic of energy arbitrage. First, we note that for both ADP Policy 1 and ADP Policy 2 (see Table \ref{table:realdata}), the largest revenues were generated in the months of May, June, July, presumably due to changes in weather. The difference between the revenues generated in the months of highest and lowest revenue, June and February, is more drastic than one might expect: $\textnormal{Jun Revenue} - \textnormal{Feb Revenue} = \$9{,}499.37$ for ADP Policy 1 and  $\textnormal{Jun Revenue} - \textnormal{Feb Revenue} = \$8{,}872.08$ for ADP Policy 2. These results suggest that perhaps energy arbitrage should not be a year--round investment, but rather one that is active only during months with potential for high revenue. As \cite{Sioshansi2009} concludes, when it comes to the value of energy storage, it is important to consider various revenue sources.

Costs of energy storage can be as low as \$160 kWh$^{-1}$ today, and it is reasonable to expect that they will continue to decrease. As mentioned earlier, with optimal storage control strategies and decreased capital costs, energy arbitrage can soon become profitable on its own, but as it currently stands, storage costs are still relatively high compared to potential revenue. Therefore, it is also imperative that the precise storage needs of our trading/bidding policies are well--understood; it may be the case that in some months, one would choose to dedicate the entire battery to frequency regulation, while in high revenue months, the better strategy may be to use some of the capacity toward arbitrage. It is clear that some policies, such as Policies $A_i$, are designed with fully utilizing the available storage in mind, but for more complex policies such as those generated by Monotone--ADP--Bidding, the usage profiles are not obvious. Figure \ref{fig:rdists} shows the empirical distribution for the storage level of the battery (on an hourly basis) throughout the test data set. Note that for presentation purposes we have scaled the plot so that the bar at $R_t=0$ is cut off; due to its designation as the initial state (and final state as well for most sample paths), its probability is skewed to 0.09 and 0.10, for the two plots respectively. The high probability at 1 MWh is likely explained by the fact that it corresponds to full hourly charge, the minimum amount of energy needed to avoid the possibility of an undersupply penalty.
\begin{figure}[h]
        \centering
        \begin{subfigure}[b]{0.45\textwidth}
                \centering
                \includegraphics[width=\textwidth]{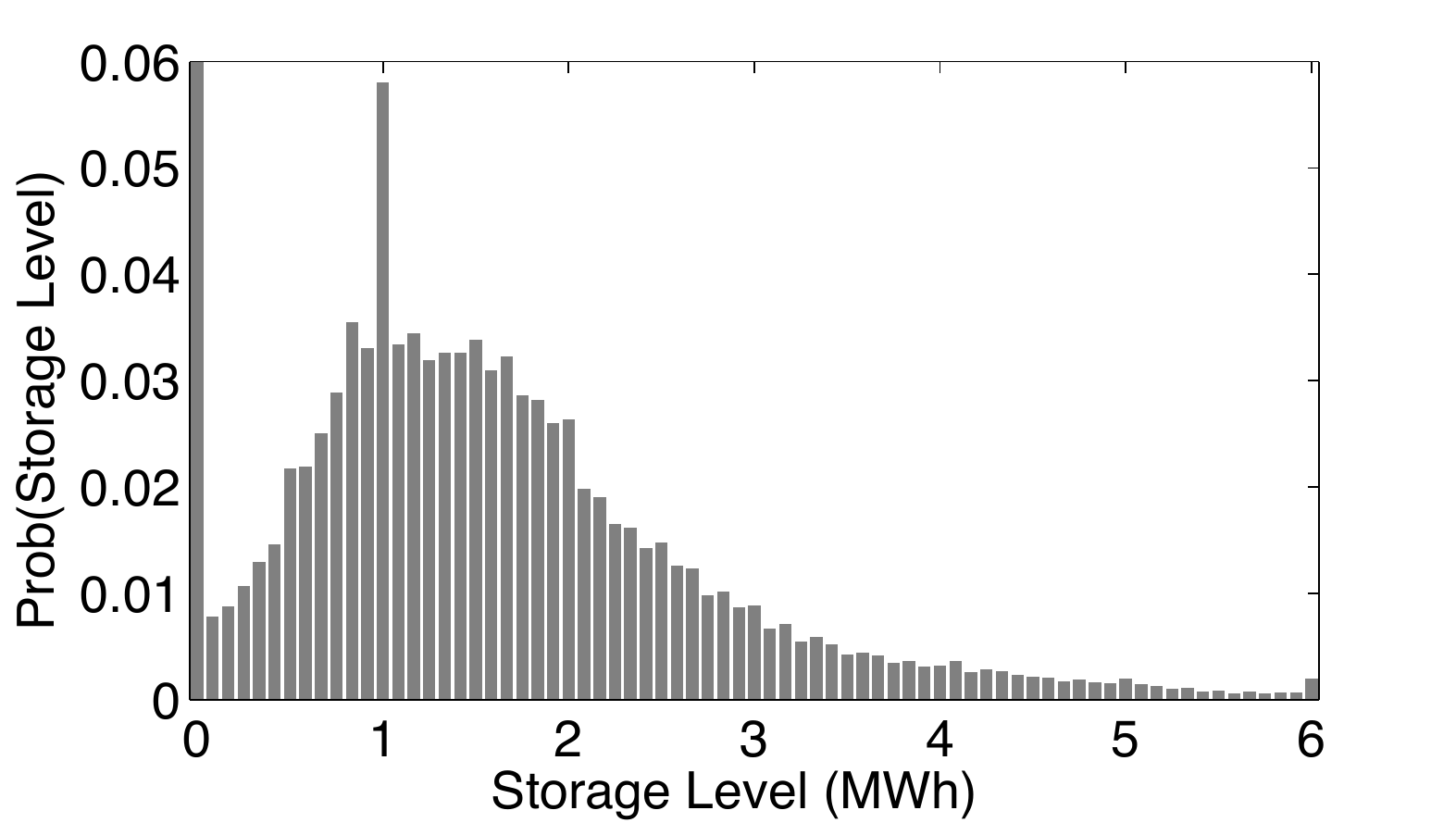}
                \caption{ADP Policy 1}\label{subfig:rdistyr}
        \label{subfig:optimalbdp}
        \end{subfigure}
        \begin{subfigure}[b]{0.45\textwidth}
                \centering
                \includegraphics[width=\textwidth]{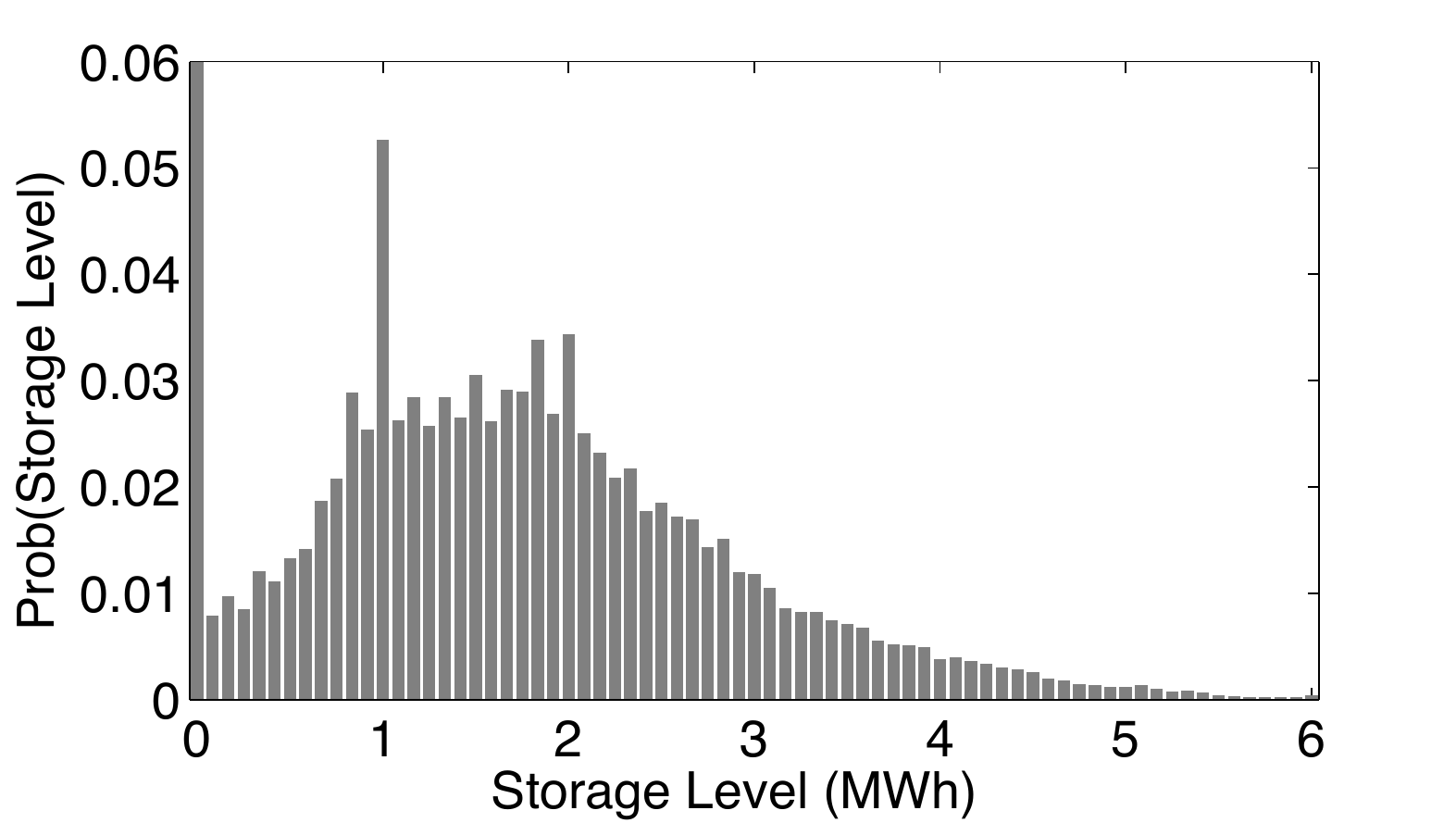}
                \caption{ADP Policy 2}\label{subfig:rdistmo}
        \end{subfigure}
        \caption{Empirical Distribution of Storage Level using ADP Policies 1 and 2}
        \label{fig:rdists}
\end{figure}
The 0.9-- and 0.95--quantiles for ADP Policy 1 occur at 3.00 MWh and 3.75 MWh, and for ADP Policy 2, they are 3.16 MWh and 3.75 MWh. This means for our (relatively short) daily trading horizon, a 6 MWh battery is unnecessary---a 33\% smaller device with 4 MWh storage would have sufficed and delivered similar results at a steep discount in capital cost. However, if a longer trading horizon, say, weekly (allowing us to take into account the low prices on the weekends), is desired, it would be necessary to train a policy using a sufficiently large battery and then using simulation to determine the effective amount of storage needed by the policy. In summary, with today's substantial capital costs, it would be prudent to do an analysis of a policy's storage needs.

Lastly, we discuss the issue of choosing $C_\textnormal{term}(s)$ in a practical implementation of the algorithm. Due to the daily cycles present in the real--time market, there is likely to be little additional value added in expending computational resources towards developing a bidding policy whose horizon lasts much longer than a few days or a week. In fact, from our conversations with industry colleagues, we envision that a bidding policy such as ours have a daily horizon that is used repeatedly day after day, with the policy retrained periodically (perhaps weekly). For such a usage scenario, it is important to correctly choose $C_\textnormal{term}(s)$, because leftover energy has value that can be capitalized on even after the true horizon of the policy. We suggest the following practical methods for determining the functional form of $C_\textnormal{term}(s)$:
\begin{enumerate}
\item Given the knowledge that the same policy is to be reused, in an effort to prevent the forced ``sell--off'' type behavior that is expected when $C_\textnormal{term}(s)=0$, it is reasonable to choose $C_\textnormal{term}(s)$ to structurally resemble $V^*_0(s)$ (i.e., up to constant shifts). One strategy for accomplishing this is to first compute $V^*_0(s)$ using a zero terminal contribution, and then re--solving the dynamic program using the previously computed $V^*_0$ as the terminal contribution. This process can be iterated until the resulting policies (not the value functions themselves) are observed to converge. Our (informal) implementation of this procedure shows that the desired behavior of not forcing the storage to zero at the end of the time horizon is indeed attained.
\item After training an initial policy, we can determine, by inspecting the resource paths, a point in time where the storage level is empty or very low (e.g., immediately after a period of high prices). The horizon of the problem can then be redefined so that $T$ corresponds to this point in time and a new policy (with zero terminal contribution) can be trained. Essentially, we hope that the forced sell--off is translated to a point in time where a natural sell--off would have likely occurred.
\end{enumerate}

\section{Conclusion}
In this paper, we describe an hour--ahead bidding and battery arbitrage problem for a real--time electricity market (e.g. NYISO's real--time market). We then formulate the problem mathematically as an MDP and show that the optimal value function satisfies a monotonicity property, a structural result that can be exploited in order to accelerate the convergence of ADP algorithms. The algorithm that we employ is called Monotone--ADP--Bidding and uses monotonicity to infer the value of states nearby an observed state. When benchmarked against a traditional approximate value iteration algorithm, we found that the improvements in terms of solution quality were drastic. Furthermore, the ADP algorithm can reach near--optimal solutions without the need for significant computational time and power (which an exact solution technique like backward dynamic programming certainly requires); in fact, our empirical results show that near--optimal solutions can be generated using less than 10\% of the computational resources necessary for backward dynamic programming. We also describe and sketch the proof of convergence for a distribution--free method where we can train value functions with Monotone--ADP--Bidding using historical spot prices --- this removes the need for us to perform the difficult task of specifying and fitting an accurate stochastic model of spot prices. In our case study, the method is tested on two large data sets: the 5--minute real--time prices from the NYISO from the years of 2011 and 2012. The policies from Monotone--ADP--Bidding help us conclude that energy arbitrage may be most valuable if practiced in a select few, high revenue months. Finally, the ADP policies consistently generated more revenue than several rule--based heuristic strategies that we considered, confirming that an ADP approach that approximates future value is worthwhile. 
 
\clearpage
\appendix
\section{Proofs}
\label{sec:appendix}
\gmono*
\begin{proof}
Since \[
g^R_{m+1}(R_t,q_s) = \bigl[\min \{g^R_m(R_t,q_s)-e_m^\intercal q_s, \Rmax\}\bigr]^+,
\]
it is clear that the transition from $g_m^R$ to $g_{m+1}^R$ is nondecreasing in the value of $g_m$ and nonincreasing in the value of $e_m^\intercal q_s$. Thus, a simple induction argument shows that for $r_1, r_2 \in \mathcal R$ and $q_1, q_2 \in \{-1,0,1\}^M$ where $r_1 \le r_2$ and $q_1 \le q_2$,
\begin{equation*}
g^R_M(r_1,q_2) \le g^R_M(r_2,q_1).
\end{equation*}
The result follows from the fact that $q(P,b)$ is nonincreasing in $b$.
\end{proof}

\glmono*
\begin{proof}
The transition
\[
g^L_{m+1}(L_t,d_s) = \bigl[g^L_m(L_t,d_s)-e_m^\intercal d_s\bigr]^+
\]
is nondecreasing in $g_m^L$ and nonincreasing in $e_m^\intercal d_s$. Like in Proposition \ref{gmono}, induction shows that for $l_1, l_2 \in \mathcal L$ and $d_1, d_2 \in \{0,1\}^M$ where $l_1 \le l_2$ and $d_1 \le d_2$,
\begin{equation*}
g^L_M(l_1,d_2) \le g^L_M(l_2,d_1).
\end{equation*}
The result follows from the fact that $d(P,b)$ is nonincreasing in $b$. 
\end{proof}

\Ctmonoprop*
\begin{proof}
First, we argue that the revenue function $C(r,l,P,b)$ is nondecreasing in $r$ and $l$. From their respective definitions, we can see that $\gamma_m$ and $U_m$ are both nondecreasing in their first arguments. These arguments can be written in terms of $r$ and $l$ through the transition functions $g_m^R$ and $g_m^L$. Applying Proposition \ref{gmono} and Proposition \ref{glmono}, we can confirm that $C(r,l,P,b)$ is nondecreasing in $r$ and $l$. By its definition,
\begin{equation*}
C_{t,t+2}(S_t,b_t) = \mathbf{E}\Bigl[C\bigl( g^R(R_t,P_{(t,t+1]},b_{t-1}), g^L(L_t,P_{(t,t+1]},b_{t-1}), P_{(t+1,t+2]}, b_t \bigr) \,|\, S_t\Bigr].
\end{equation*}
Again, applying Proposition \ref{gmono} and Proposition \ref{glmono} (for $m=M$), we see that the term inside the expectation is nondecreasing in $R_t$, $b_{t-1}^-$, and $b_{t-1}^+$ (composition of nondecreasing functions) for any outcome of $P_{(t,t+1]}$ and $P_{(t+1,t+2]}$. Thus, the expectation itself is nondecreasing.
\end{proof}

\Vtmonoprop*
\begin{proof}
Define the function $V_t^b(S_t,b_t) = \mathbf{E}\bigl[V^*_{t+1}(S_{t+1})\,|\,S_t,b_t\bigr]$, often called the \emph{post--decision} value function (see \cite{Powell2011}). Thus, we can rewrite the optimality equation as:
\begin{equation}
\begin{aligned}
&V^*_t(S_t) =\max_{b_t \in \mathcal B} \bigl [C_{t,t+2}(S_t,b_t)+V_t^b(S_t,b_t) \bigr ] \text{ for } t=0,1,2,\ldots,T-1,\\
&V^*_{T}(S_{T}) = C_\textnormal{term}(S_T).
\end{aligned}
\label{bellmanpost}
\end{equation}
 The proof is by backward induction on $t$. The base case is $t=T$ and since $V^*_T(\cdot)$ satisfies monotonicity for any state $s \in \mathcal S$ by definition. Notice that the state transition function satisfies the following property. Suppose we have a fixed action $b_t$ and two states $S_t =(R_t, L_t, b_{t-1}, P_t^S)$ and $S_t'=(R_t', L_t', b_{t-1}', P_t^S)$ where $(R_t, L_t, b_{t-1}) \le (R_t', L_t', b_{t-1}')$. Then, for any realization of the intra--hour prices $P_{(t,t+1]}$ (by Propositions \ref{gmono} and \ref{glmono}),
 \begin{align}
 S_{t+1} &= (R_{t+1},L_{t+1},b_t,P_{t+1}^S)=S^M(S_t,b_t,P_{(t,t+1]}),\nonumber\\  
 S_{t+1}' &= (R_{t+1}',L_{t+1}',b_t,P_{t+1}^S)=S^M(S_t',b_t,P_{(t,t+1]}),\nonumber
 \end{align}
 with $R_{t+1} \le R_{t+1}'$ and $L_{t+1} \le L_{t+1}'$, implying that $S_{t+1} \le S_{t+1}'$. This means that the transition function satisfies a specialized nondecreasing property. Using this and supposing that $V_{t+1}^*(\cdot)$ satisfies the statement of the proposition (induction hypothesis), it is clear that $V_t^b(S_t,b_t)$ is nondecreasing in $R_t$, $L_t$, and $b_{t-1}$. Now, by the previous proposition, we see that the term inside the maximum of (\ref{bellmanpost}) is nondecreasing in $R_t$, $L_t$, and $b_{t-1}$ for any action $b_t$. Hence, we can take the pointwise maximum and retain monotonicity; the inductive step is complete.
\end{proof}

\Vtpostmonoprop*
\begin{proof}
Previously in the proof of Proposition \ref{prop:Vtmono}, we argued that $V_t^b(S_t^b)$ is monotone in $R_t$, $L_t$, $b_{t-1}^-$, and $b_{t-1}^+$. To see the monotonicity in $b_t^-$ and $b_t^+$, first fix an outcome of $P_{(t,t+1]}$ and $b_t, b_t' \in \mathcal B$, with $b_t \le b_t'$. Observe that if we let $(R_{t+1},L_{t+1},b_t,P_{t+1}^S) = S^M(S_t,b_t,P_{(t,t+1]})$, then $(R_{t+1},L_{t+1},b'_t,P_{t+1}^S) = S^M(S_t,b_t',P_{(t,t+1]})$, with only the bid dimensions changed. Therefore,
\[
S^M(S_t,b_t,P_{(t,t+1]}) \le S^M(S_t,b_t',P_{(t,t+1]}).
\]
Thus, by Proposition \ref{prop:Vtmono}, for a fixed $S_t$, any outcome of the price process $P_{(t,t+1]}$, and $b_t \le b_t'$,
\begin{equation*}
V_{t+1}^*\bigl(S^M(S_t,b_t,P_{(t,t+1]})\bigr) \le V_{t+1}^*\bigl(S^M(S_t,b_t',P_{(t,t+1]})\bigr).
\end{equation*}
Hence, after taking expectations, we get the desired result: $V_t^b(S_t,b_t) \le V_t^b(S_t,b_t')$.
\end{proof}

\lemone*
\begin{proof}
We first show that $H$ satisfies the following properties:
\begin{enumerate}[label=(\roman*),labelindent=1in]
\item $V \le V' \Longrightarrow HV \le HV'$.
\item $V^*$ is a unique fixed point of $H$, i.e., $HV^* = V^*$.
\item $HV-\eta e \le H(V-\eta e) \le H(V+\eta e) \le HV+\eta e$, for $\eta > 0$.
\end{enumerate}
Statement (i) is trivial and follows directly from the monotonicity of the max and expectation operators. Statement (ii) follows from the fact that the finite horizon dynamic program exhibits a unique optimal value function (and thus, post--decision value function as well) determined by the backward recursive Bellman equations. Statement (iii) is easy to see directly from the definition of $H$. Now, applying Lemma 4.6 of \cite{Bertsekas1996} gives us the desired limit result.
\end{proof}

\lemtwo*
\begin{proof}
To show this, first note that given a fixed $t \le T-2$ and \emph{any} vector $Y \in \mathbb R^{|\mathcal S^b|}$ (defined over the post--decision state space) that satisfies the monotonicity property, it is true that the vector $h_t Y$, whose component at $s \in \mathcal S^b$ is defined using the post--decision Bellman recursion,
\[
(h_tY)(s) = \mathbf{E} \Bigl[\max_{b_{t+1} \in \mathcal B} \bigl [C_{t+1,t+3}(S_{t+1},b_{t+1})+Y(S_{t+1}^b) \bigr ] \,|\, S_{t}^b = s \Bigr ],
\]
also obeys the monotonicity property. We point out that there is a small difference between the operator $H$ and $h_t$ in that $H$ operates on vectors of dimension $T\cdot |\mathcal S^b|$. To verify monotonicity, $s_1, s_2 \in \mathcal S^b$ such that $s_1 \mless^b s_2$. For a fixed sample path of prices $P$, let $S_{t+1}(s_1,P)$ and $S_{t+1}(s_2,P)$ be the respective downstream pre--decision states. Applying Propositions \ref{gmono} and \ref{glmono}, we have that $S_{t+1}(s_1,P) \mless^b S_{t+1}(s_2,P)$. For any fixed $b_{t+1} \in \mathcal B$, we apply the monotonicity of the contribution function $C_{t+1,t+3}$ (Proposition \ref{prop:Ctmono}) and the monotonicity of $Y$ to see that
\begin{align}
C_{t+1,t+3}(S_{t+1}(s_1,P),b_{t+1})&+Y((S_{t+1}(s_1,P),b_{t+1}))\\
& \le C_{t+1,t+3}(S_{t+1}(s_2,P),b_{t+1})+Y((S_{t+1}(s_2,P),b_{t+1})),
\end{align}
which confirms that $(h_tY)(s_1) \le (h_tY)(s_2)$. When $t = T-1$, we set $(h_tY)(s) =  \mathbf{E}\bigl[ C_\textnormal{term}(S_{t+1}) \,|\, S_t^b=s\bigr]$ and the same monotonicity result holds.

Now, we can easily proceed by induction on $k$, noting that $U^0$ and $L^0$ satisfy monotonicity for each $t$. Assuming that $U^k$ satisfies monotonicity, we can argue that $U^{k+1}$ does as well; we first note that for any $t$, by the definition of $U^{k+1}$,
\[
U^{k+1}_t = \frac{U_t^k + \bigl(HU^k\bigr)_t}{2} = \frac{U_t^k+\bigl(h_t U^k_{t+1}\bigr)}{2}.
\]
By the induction hypothesis and the property of $h_t$ proved above, it is clear that $U_t^{k+1}$ also satisfies monotonicity and the proof is complete.
\end{proof}

\clearpage
\bibliographystyle{abbrvnat}
\bibliography{/Users/drjiang/Documents/Dropbox/Princeton/Princeton_Research/Bibtex/Bib}

\end{document}